\documentclass[11pt]{article}

\usepackage{geometry}
\usepackage{amssymb,amsmath,amsthm}
\usepackage[T1]{fontenc}
\usepackage[utf8]{inputenc}
\usepackage{bbm}
\usepackage{color}
\usepackage{cancel}
\usepackage{ulem}
\usepackage{todonotes}

\def\Ex{{\mathbb E}}
\def\Prob{{\mathbb P}}
\def\er{{\mathbb R}}
\def\supp{\mathrm{supp}}
\def\ve{\varepsilon}
\def\ind{\mathbbm{1}}
\def\bfi{\mathbf{i}}
\def\bfl{\mathbf{l}}
\def\zet{{\mathbb Z}}

\newtheorem{theorem}{Theorem}[section]
\newtheorem{lemma}[theorem]{Lemma}
\newtheorem{conjecture}[theorem]{Conjecture}
\newtheorem{proposition}[theorem]{Proposition}
\newtheorem{corollary}[theorem]{Corollary}

\theoremstyle{remark}
\newtheorem{rem}[theorem]{Remark}

\title{Norms of Randomized Circulant Matrices 
\thanks{Supported by the National Science Centre, Poland grant 2015/18/A/ST1/00553}}
\author{Rafał Latała and Witold Świątkowski}
\date{}

\begin{document}
\maketitle

\begin{abstract}
We investigate two-sided bounds for operator norms of  random matrices with  non-homogenous independent entries. We formulate a lower bound for Rademacher matrices
and conjecture that it may be reversed up to a universal constant. 
We show that our conjecture holds up to $\log\log n$ factor for randomized $n\times n$ circulant matrices
and that the double logarithm may be eliminated under some mild additional assumptions on the coefficients.
\end{abstract}

\section{Introduction and main results}

Study of random matrices is one of the central issues of probability theory and its applications.
Classical random matrix theory, initially motivated by mathematical physics,  is mostly concerned with the investigation of homogenous matrix ensembles, possesing a large degree of symmetry \cite{AGZ,Tao}. In many applications one needs however to consider highly non-homogenous random matrices. In such situations one cannot expect as precise results  as for the classical ensembles, nevertheless in recent years there was made a significant progress in this area and many important estimates were derived, cf. \cite{BBvH,Tr,vHstr} and references therein.

The aim of this paper is to discuss bounds for the operator norm of non-homogenous random matrices $X=(X_{ij})_{i,j\leq n}$ with independent entries.  It is easy to reduce to the case of mean zero random matrices, i.e. when $X_{ij}$ are independent
centered r.v's. The Gaussian case was solved in \cite{HLY}, where it was shown that if 
$X_{ij}\sim \mathcal{N}(0,\sigma_{ij}^2)$ are independent Gaussian r.v's,
\begin{align*}
\Ex\|(X_{ij})\|
&\sim \Ex\max_{1\leq i\leq n}\sqrt{\sum_{j=1}^n X_{ij}^2}+\Ex\max_{1\leq j\leq n}\sqrt{\sum_{i=1}^n X_{ij}^2}
\\
&\sim \max_i\sqrt{\sum_{j=1}^n \sigma_{ij}^2}+\max_j\sqrt{\sum_{i=1}^n \sigma_{ij}^2}
+\max_{1\leq k\leq n}\min_{I\subset [n],|I|\leq k}\max_{i,j\notin I}\sqrt{\log (k+1)}|\sigma_{ij}|.
\end{align*}
Here and throughout the paper, $\|\cdot\|$ denotes the operator norm, unless indicated otherwise.
The last estimate above  differs slightly from the one formulated in \cite{HLY}, but it is not hard to see that it is equivalent to it (see the proof of the second bound in Proposition \ref{prop:uppermaxrows} below).

The most interesting case left are Rademacher matrices, i.e. random matrices
with coefficients $X_{ij}=a_{ij}\ve_{ij}$, where $\ve_{ij}$, $1\leq i,j\leq n$ are independent symmetric $\pm 1$ r.v's. 
The main body of the paper consists of results proved in this setting. A lot of things may be however done
in a bigger generality,  as we show in Section \ref{sec:ext}.

Our first result is the lower bound for the operator norm. For two nonnegative functions $f$ and $g$ we write $f\gtrsim g$ (or $g\lesssim f$),
if there exists an absolute constant $C$ such that $ Cf \geq g$. Notation $f\sim g$ means that $f\gtrsim g$ and $g\gtrsim f$. 
 We use $C$ and $c$ to denote universal constants and their values might be different at each appearance.
We write
$\|S\|_p=(\Ex|S|^p)^{1/p}$ for the $L_p$-norm of a random variable $S$. The same notation is used for the $\ell_p$-norm of a vector: 
$\|x\|_p=(|x_1|^p+\ldots+|x_d|^p)^{1/p}$, where $x=(x_1,\ldots,x_d)\in\mathbb{R}^d$.

\begin{theorem}
\label{thm:lower}
Let $(a_{ij})_{i,j\le n}$ be any  real matrix and $X_{ij}=a_{ij}\varepsilon_{ij}$. Then
\begin{align}
\notag
\Ex\|(X_{ij})_{i,j\leq n}\|
&\gtrsim
\max_{1\leq i\leq n}\left(\sum_{j=1}^n \Ex X_{ij}^2\right)^{1/2}+\max_{1\leq j\leq n}\left(\sum_{i=1}^n \Ex X_{ij}^2\right)^{1/2}
\\
\label{eq:lower}
&\phantom{=}
+\max_{1\leq k\leq n} \min_{I\subset [n],|I|\leq k}
\sup_{\|s\|_2,\|t\|_2\leq 1}\left\|\sum_{i,j\notin I}X_{ij}s_it_j\right\|_{\log (k+1)}.
\end{align}
\end{theorem}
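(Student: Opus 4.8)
We establish the three lower estimates on the right-hand side of \eqref{eq:lower} separately.

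\emph{The first two terms.} Since $\ve_{ij}^2=1$ almost surely, for every realization of the signs we have $\|Xe_j\|_2^2=\sum_{i=1}^n X_{ij}^2=\sum_{i=1}^n a_{ij}^2=\sum_{i=1}^n\Ex X_{ij}^2$, so $\|X\|\ge\|Xe_j\|_2$ pointwise; taking the maximum over $j$, doing the same for $X^{\mathrm T}$, and then taking expectations gives the first two summands. We will also use freely the elementary bounds $\|X\|\ge|X_{ij}|=|a_{ij}|$ (hence $\max_{i,j}|a_{ij}|\le\Ex\|X\|$) and $\Ex\|X\|\ge\Ex\|X_I\|$, where $X_I$ is obtained from $X$ by zeroing out all rows and columns indexed by a set $I$.

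\emph{The third term.} Fix $k$ and set $p:=\log(k+1)$. Since $\min_{|I|\le k}(\cdots)$ is at most its value at any single admissible set, it suffices to produce one $I_0\subseteq[n]$ with $|I_0|\le k$ for which $\Ex\|X\|\gtrsim g(I_0)$, where
\[
g(I_0):=\sup_{\|s\|_2,\|t\|_2\le1}\Big\|\sum_{i,j\notin I_0}X_{ij}s_it_j\Big\|_{p}.
\]
If $k\le 6$ then $p<2$, and already with $I_0=\emptyset$ we get $g(\emptyset)\le\sup_{s,t}\big\|\sum_{i,j}X_{ij}s_it_j\big\|_2=\sup_{s,t}\big(\sum_{i,j}a_{ij}^2s_i^2t_j^2\big)^{1/2}=\max_{i,j}|a_{ij}|\le\Ex\|X\|$, so from now on $p\ge2$.

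\emph{Choice of $I_0$ and estimate of $g(I_0)$.} For a fixed pair $(s,t)$ the variable $\sum_{i,j\notin I_0}X_{ij}s_it_j=\sum_{i,j\notin I_0}a_{ij}s_it_j\,\ve_{ij}$ is a Rademacher sum, so by a Khintchine-type moment inequality (in the sharp form due to Hitczenko) its $L_p$-norm is, up to a universal constant, the sum of the $\lceil p\rceil$ largest numbers $|a_{ij}s_it_j|$ plus $\sqrt p$ times the $\ell_2$-norm of the remaining ones. Taking the supremum over $s,t$, this bounds $g(I_0)$ by $\mathrm{(I)}+\mathrm{(II)}$, where $\mathrm{(I)}=\sup\{\,\|(|a_{ij}|)\mathbf 1_S\|:S\subseteq([n]\setminus I_0)^2,\ |S|\le\lceil p\rceil\,\}$ is the largest $\lceil p\rceil$-sparse operator norm of the pruned matrix, and $\mathrm{(II)}\le\sqrt p$ times the largest entry of the pruned matrix after its $\lceil p\rceil$ biggest entries are discarded (and $\mathrm{(II)}=0$ if fewer than $\lceil p\rceil$ entries remain). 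Now choose $I_0$ by pruning $(a_{ij})$: using K\"onig's theorem, either the bipartite graph $\{(i,j):|a_{ij}|>c\,\Ex\|X\|/\sqrt p\}$ has a vertex cover of size $\le k$ (put it into $I_0$), or it contains a matching of size $k+1$; in the latter case a matching restriction shows $\Ex\|X\|\gtrsim\Ex\|X\|/\sqrt p$, which is harmless, and one observes that any \emph{dense} substructure responsible for the obstruction is necessarily small (a $q\times q$ block supported on $\le\lceil p\rceil$ entries has $q\le\sqrt p\le k$), hence can be deleted as well. After at most $k$ deletions the pruned matrix has all entries $\le c\,\Ex\|X\|/\sqrt p$ outside an exceptional set of at most $\lceil p\rceil$ entries, and every $\le\lceil p\rceil$-sparse restriction has operator norm $\lesssim\Ex\|X\|$; consequently $\mathrm{(I)}\lesssim\Ex\|X\|$ and $\mathrm{(II)}\lesssim\sqrt p\cdot\Ex\|X\|/\sqrt p=\Ex\|X\|$, which yields $g(I_0)\lesssim\Ex\|X\|$.

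\emph{The main obstacle.} The crux is the pruning step: designing, for each $k$, a deletion set $I_0$ of size $\le k$ such that $\Ex\|X\|$ simultaneously dominates all $\le\lceil p\rceil$-sparse operator norms and the thresholded maximal entry of the pruned matrix. This is precisely where the form of \eqref{eq:lower} is delicate: sparse substructures (matchings, bounded-degree patterns) carry an operator norm but no $\sqrt{\log}$ factor, so the bilinear-form moment $\|\sum_{i,j\notin I}X_{ij}s_it_j\|_{\log(k+1)}$ cannot be replaced by the Gaussian-type quantity $\sqrt{\log(k+1)}\,\max_{i,j\notin I}|a_{ij}|$, and the pruning must exploit this — together with the submatrix monotonicity $\Ex\|X\|\ge\Ex\|X_S\|$ — to absorb the genuinely spread-out part of the matrix into $\Ex\|X\|$ at the cost of the $\sqrt p$ factor, leaving only a harmless sparse remainder. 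Verifying that the combinatorial quantities produced by the pruning are all $\lesssim\Ex\|X\|$ is the only non-routine part of the argument.
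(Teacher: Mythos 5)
Your treatment of the first two terms is fine and agrees with the paper. For the third term, however, the argument has a genuine gap, and moreover it takes a direction that the paper does not take and that does not appear to be salvageable as written.

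You propose to \emph{construct} a specific $I_0$ with $|I_0|\le k$ for which $g(I_0)\lesssim\Ex\|X\|$, by Hitczenko's formula reducing the task to bounding (I) a $p$-sparse operator-norm quantity and (II) a thresholded tail, and then pruning via K\"onig's theorem. The pruning step is where the argument breaks. If the bipartite graph of entries with $|a_{ij}|>c\,\Ex\|X\|/\sqrt p$ admits a matching of size $k+1$, your stated conclusion ``$\Ex\|X\|\gtrsim\Ex\|X\|/\sqrt p$, which is harmless'' is vacuously true and gives no information, and the follow-up remark about ``dense substructures'' does not apply: a matching is as sparse as possible, yet it is precisely the obstruction K\"onig's theorem produces. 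A concrete counterexample to your claim that after $\le k$ deletions the pruned matrix has at most $\lceil p\rceil$ entries above the threshold is $a_{ij}=\ind_{\{i=j\}}$ (the identity): here $\Ex\|X\|=1$, the threshold is $c/\sqrt p<1$, every diagonal entry exceeds it, and after deleting any $k$ indices there remain $n-k\gg p=\log(k+1)$ entries above threshold. The theorem is still true for this example — $g(I_0)\lesssim 1$ because the sparse operator norm of a diagonal $0$--$1$ matrix is $1$ and the Hitczenko tail is $\le1$ by $\sum_i|s_it_i|\le1$ — but your pruning lemma as stated is false, so the chain of inequalities does not close. You acknowledge this yourself by labelling the step ``the main obstacle'' and ``the only non-routine part of the argument,'' which is exactly the part left unproved.

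The paper's proof proceeds in an essentially opposite, probabilistic direction and does not try to exhibit an explicit $I_0$. Set $\gamma:=\min_{|I|\le k}g(I)$. After disposing of small $k$ via Khintchine and the case where $\gamma$ is already controlled by the row/column sums, it uses Lemma~\ref{lem:smallsupportrad} to restrict to vectors of support $\le 2\log(k+1)\le\sqrt k$. Because \emph{every} set $I$ of size $\le k$ has $g(I)\ge\gamma$, one can inductively pick about $\sqrt k$ pairwise disjoint index sets $I_l$ and unit vectors $s^{(l)},t^{(l)}$ with $\|S_l\|_{\log(k+1)}\ge\gamma/2$ for $S_l=\sum_{i,j\in I_l}s_i^{(l)}t_j^{(l)}X_{ij}$. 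The $S_l$ are independent, and the Paley--Zygmund/Khintchine lower tail bound \eqref{eq:LowtailRad} gives $\Prob(|S_l|\gtrsim\gamma)\gtrsim 1/\sqrt k$; since there are $\sqrt k$ independent trials, the maximum exceeds $c\gamma$ with constant probability, whence $\Ex\|X\|\gtrsim\gamma$. This argument cannot be reproduced by a local, deterministic pruning of large entries: the crucial input is that \emph{disjointness of the $I_l$ implies independence of the $S_l$}, and it is this independence — not a combinatorial cover — that forces the operator norm to be large. To repair your proposal you would essentially need to rediscover this reduction-to-disjoint-substructures idea, at which point the argument becomes the paper's argument.
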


\medskip

\noindent
{\bf Remark.} Since $\|\mathcal{N}(0,\sigma^2)\|_p\sim \sqrt{p}\sigma$ for $p\geq \ln 2$, in the Gaussian case  we have
\[
\sup_{\|s\|_2,\|t\|_2\leq 1}\left\|\sum_{i,j}\sigma_{ij}g_{ij}s_it_j\right\|_p
\sim \sqrt{p}\max_{i,j}|\sigma_{ij}|,
\]
where $g_{ij}$ are i.i.d. $\mathcal{N}(0,1)$ r.v's.
Thus the main result of \cite{HLY} states that \eqref{eq:lower} may be reversed if $X_{ij}$ are independent
centered Gaussian r.v's.

Theorem \ref{thm:lower} and the remark above motivate the following conjecture.

\begin{conjecture}
\label{conj:rademacher}
We have
\begin{align*}
\Ex\|(a_{ij}\ve_{ij})_{i,j\leq n}\|
&\sim
\max_{1\leq i\leq n}\left(\sum_{j=1}^n a_{ij}^2\right)^{1/2}+\max_{1\leq j\leq n}\left(\sum_{i=1}^n a_{ij}^2\right)^{1/2}
\\
&\phantom{=}
+\max_{1\leq k\leq n}\min_{I\subset [n],|I|\leq k}
\sup_{\|s\|_2,\|t\|_2\leq 1}\left\|\sum_{i,j\notin I}a_{ij}\ve_{ij}s_it_j\right\|_{\log (k+1)}.
\end{align*}
\end{conjecture}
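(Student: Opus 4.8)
The equivalence in Conjecture \ref{conj:rademacher} splits into the lower bound $\gtrsim$ and the upper bound $\lesssim$; the lower bound is exactly Theorem \ref{thm:lower} (recall $\Ex X_{ij}^2=a_{ij}^2$), which we may take for granted, so the whole content of the conjecture is the matching upper bound. A first, soft estimate is free: since $(x_{ij})\mapsto\|(x_{ij})\|$ is convex, the Gaussian--Rademacher contraction principle gives $\Ex\|(a_{ij}\ve_{ij})\|\le\sqrt{\pi/2}\,\Ex\|(a_{ij}g_{ij})\|$ with $g_{ij}$ i.i.d.\ $\mathcal N(0,1)$, and then the formula of \cite{HLY} recalled above yields
\[
\Ex\|(a_{ij}\ve_{ij})\|\lesssim \max_i\Big(\sum_j a_{ij}^2\Big)^{1/2}+\max_j\Big(\sum_i a_{ij}^2\Big)^{1/2}+\max_k\min_{|I|\le k}\sqrt{\log(k+1)}\,\max_{i,j\notin I}|a_{ij}|.
\]
Since Khintchine's inequality gives $\max_{i,j\notin I}|a_{ij}|\le\sup_{\|s\|_2,\|t\|_2\le1}\|\sum_{i,j\notin I}a_{ij}\ve_{ij}s_it_j\|_{\log(k+1)}\le C\sqrt{\log(k+1)}\,\max_{i,j\notin I}|a_{ij}|$ (the right-hand inequality because $\|(a_{ij}s_it_j)\|_2\le\max_{ij}|a_{ij}|$), the conjectured third term always lies between $\max_k\min_{|I|\le k}\max_{i,j\notin I}|a_{ij}|$ and this Gaussian overestimate. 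Hence the conjecture already follows from the above, except when the two ends are genuinely far apart — i.e.\ exactly when $X$ carries a ``spread-out heavy block'': a set of comparably large entries no $k$ of whose rows and columns can be deleted, on which the Rademacher quadratic form is far more concentrated ($L_p\sim L_1$) than the Gaussian one. The whole problem is to recover the factor $\sqrt{\log(k+1)}$ \emph{exactly} on such configurations.

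For this I would run a generic-chaining bound for $\|X\|=\sup_{(s,t)\in\mathcal S}\langle Xs,t\rangle$ over the product $\mathcal S$ of the two unit spheres, arranged so that its increments are exactly the quantities appearing in the conjecture. One builds admissible sets $\mathcal S_r\subset\mathcal S$ of size $\exp(2^r)$ out of sparse approximations, so that consecutive approximants of a pair $(s,t)$ differ essentially in at most $\sim2^r$ coordinates; those ``large coordinates'', accumulated from scale $r$ onward, make up the set $I$ with $|I|\lesssim k\sim2^r$ that one is forced to excise, while the complementary parts of $s-s'$, $t-t'$ are fixed unit vectors. For a fixed pair, $\langle Xs,t\rangle=\sum_{ij}a_{ij}s_it_j\ve_{ij}$ is a sum of independent symmetric variables (each $\ve_{ij}$ occurs once), whose tails at level $p\sim2^r=\log|\mathcal S_r|$ are governed by the classical two-sided moment formula for such sums; a union bound over $\mathcal S_r$ then turns the bulk part of the chaining into a contribution $\lesssim\sup_{s,t}\|\sum_{i,j\notin I}a_{ij}\ve_{ij}s_it_j\|_{\log(k+1)}$, and the two coarsest scales reproduce the row and column $\ell_2$-norm terms. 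I would carry this out first for the circulant matrices of the title, where the symmetry of the profile $(a_{ij})$ makes the relevant quadratic forms explicit and the chaining nearly one-dimensional, and only then attempt general coefficients.

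The step I expect to be the real obstacle — and the reason the conjecture is obtained here only up to a $\log\log n$ factor — is the tightness of this chaining, i.e.\ showing that the geometric sum over scales costs no more than the single best scale. Summing the scale-$r$ contributions naively loses a factor which, after optimising over the dominating scale, is of order $\log\log n$. In the circulant case the symmetry lets one collapse the scales at the price of exactly one double logarithm (the $\log\log n$ of the abstract), and under a mild regularity hypothesis on $(a_{ij})$ that forbids the worst scale cascade the loss disappears altogether (the ``mild additional assumptions'' of the abstract). Doing this with no hypothesis at all in full generality is what I do not see how to do — which is, of course, why the statement is only a conjecture.
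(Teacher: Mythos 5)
Conjecture \ref{conj:rademacher} is stated in the paper as an open problem; it is not proved there, and the only partial results toward it are the lower bound (Theorem \ref{thm:lower}), Seginer's $\log^{1/4}(n+1)$ bound \eqref{eq:Seg}, and the circulant case with a $\log\log n$ loss (Theorem \ref{thm:twosidedbound}). Your proposal correctly identifies all of this and is appropriately candid that it does not close the gap. So there is no paper proof to compare against; what I can usefully check is whether your speculative route for the circulant case matches what the paper actually proves, and it does not.

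The paper's upper bound for circulant matrices is \emph{not} obtained by generic chaining over the product sphere. It instead exploits the cyclic shift-invariance of the coefficients directly: averaging a fixed block-diagonal subgraph over all $n$ cyclic shifts (Lemma \ref{lem:meandiag}) produces $0$--$1$ matrices $B_1,\dots,B_N$, each block-diagonal with blocks of size at most $2^d$, satisfying $\tfrac{1}{32}\ind_E\le\tfrac{1}{N}\sum_k B_k$. By the contraction principle this reduces the bound to $\max_k\Ex\|\varepsilon\cdot A\cdot B_k\|$, and each small block is then handled by the Gaussian result of \cite{HLY} (Corollary \ref{cor:boundbydegree}). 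For $0$--$1$ coefficients this already gives the conjectured bound with no extraneous factor. In the general circulant case the $\log\log n$ arises from a dyadic stratification of coefficient magnitudes: $A$ is split into $k_0\approx\log\log n$ pieces $A^{(k)}$ with entries in $(e^{-k},e^{-k+1}]$, Corollary \ref{cor:boundbydegree} is applied to each, and the triangle inequality over $k$ together with a Cauchy--Schwarz on the row-length terms produces the $\log\log n$ and $\sqrt{\log\log n}$ factors. So the loss is not from ``summing scale-$r$ contributions in a chaining scheme''; it is the much more elementary price of adding $\approx\log\log n$ terms, and the ``mild additional assumption'' that removes it (all nonzero entries lying in a fixed interval $[\delta,1]$) works precisely because it collapses the stratification to a single band, not because it ``forbids a scale cascade''. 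Your chaining heuristic, with increments designed to match $\|\cdot\|_{\ve,\log(k+1)}$, is a reasonable but genuinely different route, and as you yourself note it is not carried out here either.
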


Seginer \cite{Se} proved that  
\begin{equation}
\label{eq:Seg}
\Ex\|(a_{ij}\ve_{ij})_{i,j\leq n}\|\lesssim  
\sqrt[4]{\log (n+1)}\Bigg(\max_{1\leq i\leq n}\sqrt{\sum_{j=1}^n a_{ij}^2}+\max_{1\leq j\leq n}\sqrt{\sum_{i=1}^n a_{ij}^2}\Bigg)
\end{equation}
and that one cannot improve the  factor $\log^{1/4}(n+1)$. This shows that
Conjecture \ref{conj:rademacher} holds up to $\log^{1/4}(n+1)$ factor.

Our main result states that if coefficients $(a_{ij})$ form a circulant matrix then Conjecture \ref{conj:rademacher} is satisfied up to $\log \log n$ factor.

\begin{theorem}
\label{thm:twosidedbound}
Suppose that $(a_{ij})_{i,j\leq n}$ is a circulant matrix, i.e. $a_{ij}=b_{i-j \mod n}$ for a deterministic sequence 
$(b_i)_{i=0}^{n-1}$. Then
\begin{align*}
\sqrt{\sum_{j=0}^{n-1}b_j^2}&+\sup_{\|s\|_2,\|t\|_2\leq 1}\left\|\sum_{i,j=1}^na_{ij}\ve_{ij}s_it_j\right\|_{\log (n+1)}
\lesssim \Ex\|(a_{ij}\ve_{ij})\|
\\
&\lesssim \sqrt{\log\log (n+3)}
\sqrt{\sum_{j=0}^{n-1}b_j^2}+\log\log(n+3)\sup_{\|s\|_2,\|t\|_2\leq 1}\left\|\sum_{i,j=1}^na_{ij}\ve_{ij}s_it_j\right\|_{\log (n+1)}.
\end{align*}
Moreover, $\log\log$ factors may be eliminated in the case when $b_i$ takes only values $0$ and $1$ (i.e. when $(a_{ij})$ is an adjacency matrix of a directed circulant graph).
\end{theorem}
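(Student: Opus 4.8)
I would organise the argument into three pieces: the (easy) lower bound, a reduction of the upper bound to matrices with a single magnitude scale, and the core single–scale estimate, which I expect to be the only hard point.

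\emph{Lower bound.} Each of the two first–order terms in \eqref{eq:lower} equals $\sqrt{\sum_{j=0}^{n-1}b_j^2}$: every row and column of a circulant matrix has $\ell_2$–norm $(\sum_l b_l^2)^{1/2}$, and since $|\ve_{ij}|=1$ the same holds for $(X_{ij})$, so $\|X\|\ge\max_i\|Xe_i\|_2=\sqrt{\sum_l b_l^2}$ surely. For the $L_{\log(n+1)}$–term I would use the third summand of \eqref{eq:lower} with $k=\lfloor n/100\rfloor$, so that $\log(k+1)\asymp\log(n+1)$, and then check that the inner minimum over $I$ costs only a constant factor. If $s^\ast,t^\ast$ nearly attain the unrestricted supremum, then for every cyclic shift $a$ the vectors $s^\ast(\cdot+a),t^\ast(\cdot+a)$ attain the same value, since the law of $\sum_{i,j}b_{i-j}\ve_{ij}s^\ast_{i+a}t^\ast_{j+a}$ does not depend on $a$; averaging over $a$ produces a shift along which $s^\ast,t^\ast$ retain at least $99\%$ of their $\ell_2$–mass off $I$, and truncating to $I^c$ loses only a constant factor by the triangle inequality, the two error terms being bilinear forms of the same type carrying an $\ell_2$–small factor. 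Comparability of the $L_p$– and $L_{p/2}$–norms of Rademacher sums then replaces $\log(k+1)$ by $\log(n+1)$.

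\emph{Reduction of the upper bound.} Normalise $\sum_l b_l^2=1$ and assume $n$ large (otherwise everything is bounded by an absolute constant and $\log\log(n+3)$ is too). Write $b=b'+b''$ with $b'_l=b_l\ind[|b_l|>(\log n)^{-1/2}]$ and correspondingly $X=X'+X''$. Kahane's contraction principle gives $\Ex\|X''\|\le\sqrt{\pi/2}\,\Ex\|G''\|$ for the Gaussian analogue $G''$, and by the Gaussian result recalled after \eqref{eq:lower} (or already by $\Ex\|G''\|\lesssim\max_i\|\mathrm{row}_i\|_2+\sqrt{\log n}\max_{ij}|b''_{i-j}|$) this is $\lesssim 1+(\log n)^{-1/2}\sqrt{\log n}\lesssim\sqrt{\sum_l b_l^2}$. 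The coefficients of $b'$ lie in $((\log n)^{-1/2},\beta]$ with $\beta=\max_l|b_l|\le1$, so the dyadic level sets $\Delta_m=\{l:\ 2^{-m-1}\beta<|b_l|\le2^{-m}\beta\}$ are non-empty only for $m\le M$ with $M\asymp\log\log(n+3)$. With $\alpha_m:=2^{-m}\beta$ and $X^{\Delta}:=(\ind[\,i-j\bmod n\in\Delta\,]\ve_{ij})_{i,j\le n}$, applying the contraction principle with the multipliers $b_{i-j}/\alpha_m$ (of modulus $\le1$) gives $\Ex\|X^{(m)}\|\le\alpha_m\Ex\|X^{\Delta_m}\|$ for the $m$-th dyadic block $X^{(m)}$ of $X'$.

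\emph{Assembling the blocks, and the $0/1$ case.} The crux is the single–scale bound, i.e.\ Conjecture \ref{conj:rademacher} for $0/1$–circulant patterns: for every $\Delta\subseteq\zet_n$ with $d=|\Delta|$,
\[
\Ex\|X^{\Delta}\|\ \lesssim\ \sqrt d\ +\ R(X^{\Delta}),\qquad R(Y):=\sup_{\|s\|_2,\|t\|_2\le1}\Big\|\sum_{i,j}Y_{ij}s_it_j\Big\|_{\log(n+1)}.
\]
Granting this, $\alpha_mR(X^{\Delta_m})=R(\alpha_mX^{\Delta_m})\lesssim R(X^{(m)})\le R(X)$ — the last inequality because zeroing out matrix entries contracts the $L_p$–norm of the associated Rademacher sum (write the restriction as the average of the identity and the sign flip on the removed entries), and the middle one by contraction with multipliers $\alpha_m/b_{i-j}$ of modulus $\le2$. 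Hence $\Ex\|X^{(m)}\|\lesssim\alpha_m\sqrt{d_m}+R(X)$ with $d_m=|\Delta_m|$, and summing over $m\le M$,
\[
\Ex\|X'\|\ \lesssim\ \sum_{m\le M}\sqrt{\alpha_m^2d_m}+(M+1)R(X)\ \le\ \sqrt{(M+1)\sum_{m\le M}\alpha_m^2d_m}+(M+1)R(X);
\]
since the $\Delta_m$ are disjoint and $\alpha_m^2d_m\le4\sum_{l\in\Delta_m}b_l^2$, the first term is $\le2\sqrt{M+1}\lesssim\sqrt{\log\log(n+3)}$. Together with the bound on $\Ex\|X''\|$ this is the claimed upper estimate. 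If $b$ is $0/1$–valued there is no small part and no decomposition — $X=X^{\Delta}$ with $\Delta=\{l:b_l=1\}$, $\sqrt{\sum_j b_j^2}=\sqrt d$ — so the single–scale bound applies verbatim and no $\log\log$ factor enters.

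\emph{The single–scale bound, and where it hurts.} I would prove it by the moment method with $p\asymp\log(n+1)$: $\Ex\|X^{\Delta}\|^{2p}\le\Ex\,\mathrm{tr}\big((X^{\Delta}(X^{\Delta})^{*})^p\big)$, and a direct computation gives $X^{\Delta}(X^{\Delta})^{*}=dI+Y$ with $Y$ of zero diagonal and $Y_{ik}=\sum_{j:\,i-j,\,k-j\in\Delta}\ve_{ij}\ve_{kj}$. Expanding $(dI+Y)^p$ and taking expectations reduces the whole estimate to counting closed walks of length $2p$ in the $d$–regular bipartite Cayley graph of $\Delta$ (sides indexed by rows and by columns, $i\sim j$ iff $i-j\in\Delta$) which traverse every edge an even number of times and never backtrack at a row vertex; translation invariance contributes one overall factor $n$, harmless since $n^{1/2p}=O(1)$. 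The ``tree–like'' walks — doubled plane trees — contribute $\lesssim n(Cd)^p$, i.e.\ $\lesssim\sqrt d$ after the $2p$–th root. The substance of the proof is to bound the remaining walks, which record nontrivial additive relations among the selected elements of $\Delta$ (the closure relation together with its sub–relations), by $n\,C^p R(X^{\Delta})^{2p}$; heuristically such a walk exhibits combinatorial rectangles $P-Q\subseteq\Delta$ with $|P|\,|Q|$ large, and these are exactly the configurations forcing $R(X^{\Delta})$ to be large (take $s,t$ uniform on $P,Q$). Turning this heuristic into a clean bookkeeping — matching the excess structure of a walk against $R(X^{\Delta})$ uniformly in $p$ — is, I expect, the main obstacle; it is also the single place where the $0/1$–hypothesis is essential, through the exact identity $X^{\Delta}(X^{\Delta})^{*}=dI+Y$.
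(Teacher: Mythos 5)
Your overall architecture matches the paper's: the lower bound uses shift-invariance of circulants together with averaging over cyclic shifts to pass from the restricted to the unrestricted $L_{\log(n+1)}$-supremum, and the upper bound splits $b$ into a small tail (handled by Gaussian comparison) plus $O(\log\log n)$ dyadic magnitude levels, each reduced by the contraction principle to a $0/1$ circulant at the corresponding scale. Those reductions are sound, and your observation that the $0/1$ case needs no decomposition at all and hence incurs no $\log\log$ factor is exactly right.

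The genuine gap is the single-scale bound
\[
\Ex\|X^{\Delta}\|\ \lesssim\ \sqrt{d}\ +\ \sup_{\|s\|_2,\|t\|_2\le1}\Bigl\|\sum_{i,j}\ind_{\{i-j\in\Delta\}}\ve_{ij}s_it_j\Bigr\|_{\log(n+1)},\qquad d=|\Delta|,
\]
which is Corollary~\ref{cor:boundbydegree} in the paper and is the real content of the theorem. You propose to prove it by the trace/moment method, expanding $\Ex\,\mathrm{tr}\bigl((X^{\Delta}(X^{\Delta})^{*})^p\bigr)$ and separating tree-like walks (giving $\sqrt d$) from the rest, which you then hope to charge against $R(X^{\Delta})$. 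You explicitly flag this bookkeeping as the main obstacle, and indeed no argument is offered for the key step of dominating the non-tree contribution by $n\,C^pR(X^{\Delta})^{2p}$ uniformly in $p$; the passage from ``the walk exhibits a combinatorial rectangle $P-Q\subseteq\Delta$'' to a quantitative bound in terms of $R$ is not routine, and it is unclear that the encoding closes. The paper avoids this entirely by a structural argument: via the ``lower cubes'' $D_k$ (images of $\zet_2^d$ under signed subset sums of the band offsets) and Lemmas~\ref{lem:exclusion}--\ref{lem:meandiag} it shows that $\tfrac1{32}\ind_E\le\tfrac1N\sum_{k}B_k$ where each $B_k$ is the adjacency matrix of a graph whose components have at most $2^d$ vertices; the contraction principle then reduces to block-diagonal matrices, Lemma~\ref{lem:blocknorm} separates out the $\|\cdot\|_{\ve,\log(n+1)}$ term, and the Gaussian comparison \eqref{eq:comparison} together with the $n\times n$ Gaussian bound \eqref{eq:gaussian} applied to a block of size $\le2^d$ yields the $\sqrt d$. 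Without either your moment computation completed or this cube-packing argument, the single-scale estimate --- and hence the upper bound of the theorem --- remains unproved.

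Two smaller remarks. In your lower-bound step you want a cyclic shift $a$ such that both $s^{\ast}(\cdot+a)$ and $t^{\ast}(\cdot+a)$ keep most of their $\ell_2$-mass off $I$, and then conclude by a perturbative triangle inequality; with $|I|\le n/100$ the averaging gives some shift with $\|s^{\ast}\ind_{I-a}\|_2,\|t^{\ast}\ind_{I-a}\|_2\le1/\sqrt{50}$, which is small enough to absorb the error terms, but this requires a line of bookkeeping that you elide. The paper's version is cleaner: it writes $|a_{ij}s_it_j|\le\bigl|\tfrac2n\sum_k a_{ij}s_it_j\ind_{\{i,j\notin I-k\}}\bigr|$, applies the triangle inequality over $k$, and then uses shift-invariance of the law, avoiding any perturbative step. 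Second, when you pass from $R(\alpha_m X^{\Delta_m})$ to $R(X^{(m)})$ via a contraction with multipliers $\alpha_m/b_{i-j}$, you should note that these multipliers lie in $[1/2,1]$ on $\Delta_m$ (your ``modulus $\le2$'' is for the reverse direction); the inequality you actually need is $R(X^{(m)})\ge\tfrac12\,\alpha_mR(X^{\Delta_m})$, which does hold, but the sign of the comparison matters.
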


In order to apply such a result it would be nice to have a simple two-sided bound for the quantity 
\begin{equation}
\label{eq:defAvep}
\|A\|_{\ve,p}:=\sup_{\|s\|_2,\|t\|_2\leq 1}\left\|\sum_{i,j=1}^na_{ij}\ve_{ij}s_it_j\right\|_{p}.
\end{equation}
Two-sided bounds for $L_p$-norms of Rademacher sums were derived in \cite{Hi} on the base of tail bounds \cite{MS} (see also \cite{HK} for a discussion of various equivalent norms):
\[
\Bigg\|\sum_{k=1}^n a_k\ve_k\Bigg\|_p\sim \sum_{k\leq p}a_k^*+\sqrt{p}\Bigg(\sum_{k>p}(a_k^*)^2\Bigg)^{1/2}\sim
\sup\Bigg\{\sum_{k=1}^n a_kb_k\colon \|b\|_\infty\leq 1, \|b\|_2\leq \sqrt{p}\Bigg\},
\]
where $(a_k^*)_{k=1}^n$ denotes the nonincreasing rearrangement of $(|a_k|)_{k=1}^n$. It is however nonobvious how to apply 
the above bounds to get a simple two-sided  estimate for $\|A\|_{\ve,p}$. We were able to derive such bound when $A$ is an adjacency matrix of a (directed) graph.

\begin{proposition}
\label{prop:szacrad}
For any $E\subset [n]\times [n]$ and $p\geq 1$ we have
\begin{equation}
\label{eq:estadj}
\|\ind_E\|_{\ve,p}=\sup_{\|s\|_2,\|t\|_2\leq 1}\Bigg\|\sum_{(i,j)\in E}\ve_{ij}s_it_j\Bigg\|_p 
\sim \max_{I\subset E, |I|\leq p}\|(\ind_{\{(i,j)\in I\}})\|.
\end{equation}
\end{proposition}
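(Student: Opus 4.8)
The plan is to prove the two-sided bound in \eqref{eq:estadj} separately. For the lower bound, I would note that for any subset $I\subset E$ the quantity $\sup_{\|s\|_2,\|t\|_2\le 1}\|\sum_{(i,j)\in I}\ve_{ij}s_it_j\|_p$ dominates the same supremum over $E$ only in the wrong direction, so instead I would fix a set $I\subset E$ with $|I|\le p$ realizing (up to a constant) the operator norm $\|(\ind_{\{(i,j)\in I\}})\|$, pick unit vectors $s,t$ and signs achieving that operator norm, and then restrict attention to the event on which $\ve_{ij}=\mathrm{sign}(s_it_j)$ for the at most $p$ pairs $(i,j)\in I$; this event has probability $2^{-|I|}\ge 2^{-p}$, and on it $\sum_{(i,j)\in I}\ve_{ij}s_it_j=\|(\ind_I)\|$. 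Since we may also delete $(i,j)\notin I$ by choosing $s,t$ supported on the rows/columns of $I$, this forces $\|\ind_E\|_{\ve,p}\gtrsim 2^{-p/p}\|(\ind_I)\|\gtrsim\|(\ind_I)\|$ — more carefully, one uses $(\Ex|Z|^p)^{1/p}\ge (\Prob(Z\ge a))^{1/p}a$ with $Z$ the bilinear form at the optimal $s,t$ and $a=\|(\ind_I)\|$, the point being that fixing at most $p$ signs costs only a bounded factor in the $p$-th root. Taking the maximum over all such $I$ gives the lower bound.

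For the upper bound I would proceed by a chaining/net argument combined with the moment bound for Rademacher sums quoted above. Fix $s,t$ on the unit sphere. The random variable $\sum_{(i,j)\in E}\ve_{ij}s_it_j$ is a Rademacher sum with coefficients $(s_it_j)_{(i,j)\in E}$, so by Hitczenko's bound its $L_p$-norm is comparable to
\[
\sup\Bigl\{\sum_{(i,j)\in E}s_it_j c_{ij}\colon \|c\|_\infty\le 1,\ \|c\|_2\le\sqrt p\Bigr\}.
\]
Taking now the supremum over $\|s\|_2,\|t\|_2\le 1$ and interchanging suprema, $\|\ind_E\|_{\ve,p}$ is comparable to the maximum over matrices $c$ supported on $E$ with $\|c\|_\infty\le 1$, $\|c\|_{HS}\le\sqrt p$ of the operator norm $\|(c_{ij}\ind_{\{(i,j)\in E\}})\|$. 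So it remains to show that for such a constrained $c$ one has $\|(c_{ij})\|\lesssim\max_{I\subset E,|I|\le p}\|(\ind_I)\|$. Here I would split the entries of $c$ dyadically according to $|c_{ij}|\in(2^{-k-1},2^{-k}]$: the level-$k$ block is (up to a factor $2^{-k}$) bounded in operator norm by $\|(\ind_{E_k})\|$ where $E_k\subset E$, and the Hilbert–Schmidt constraint $\sum |c_{ij}|^2\le p$ forces $|E_k|\lesssim 4^k p$. For $k=0$ the set $E_0$ may be large, but one peels off its "heavy" part and uses that an operator-norm-$\le r$ submatrix with $0/1$ entries contains at most $r^2$ ones along any combinatorial rectangle — more simply, one uses that $\|(\ind_{E_0})\|\le\|(\ind_E)\|$ after re-incorporating the Hilbert–Schmidt bound; summing the geometric series $\sum_k 2^{-k}\|(\ind_{E_k})\|$ and controlling each $\|(\ind_{E_k})\|$ by $\max_{I\subset E,|I|\le p}\|(\ind_I)\|$ (legitimate once $|E_k|\lesssim p$, and for the first few large levels by a separate truncation argument) yields the claim.

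The main obstacle is the upper bound, specifically the passage from "operator norm of a matrix $c$ with bounded entries and bounded Hilbert–Schmidt norm supported on $E$" to "operator norm of a $0/1$ submatrix of $E$ with at most $p$ entries": the low-order dyadic levels $k$ (where $2^{-k}$ is close to $1$) can involve sets $E_k$ with many more than $p$ elements, so one cannot directly bound $\|(\ind_{E_k})\|$ by the right-hand side of \eqref{eq:estadj}. I expect this to require a more careful argument — e.g. noting that if $\|(\ind_{E_k})\|=:r$ then $E_k$ can be covered by $O(r^2/? )$ combinatorial rectangles and one selects a sub-rectangle of area $\le p$ carrying a constant fraction of the norm, or alternatively iterating the extraction so that each extracted piece has $\le p$ entries while losing only a constant (or at worst $\log$, which one then argues can be absorbed) factor. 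Getting the constant genuinely universal, independent of $p$ and $n$, is where the technical work lies.
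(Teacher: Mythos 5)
Your lower bound matches the paper's: you restrict to a near-maximizing set $I\subset E$ with $|I|\leq p$, take the maximizing unit vectors, and pay a factor $2^{-|I|/p}\geq\tfrac12$ for fixing at most $p$ signs. One technical remark: to drop the terms $(i,j)\in E\setminus I$ the clean justification is Jensen's inequality (condition on $(\ve_{ij})_{(i,j)\in I}$; the remaining sum is conditionally centered), not choosing $s,t$ supported on the rows and columns of $I$ — there may be edges of $E\setminus I$ inside that rectangle. This is a cosmetic fix.

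The upper bound is where your proposal is genuinely incomplete, and you already sense this. Your reduction via Hitczenko's two-sided bound, giving $\|\ind_E\|_{\ve,p}\sim\sup\{\|(c_{ij}\ind_{(i,j)\in E})\|\colon \|c\|_\infty\leq 1,\ \|c\|_{HS}\leq\sqrt p\}$, is correct. But the dyadic-level argument that follows does not close. With $E_k=\{(i,j)\in E\colon 2^{-k-1}<|c_{ij}|\leq 2^{-k}\}$, the Hilbert--Schmidt constraint gives $|E_k|\lesssim 4^k p$. For $k$ small enough that $|E_k|\leq p$ you get $2^{-k}\|\ind_{E_k}\|\leq 2^{-k}M$, fine; but for larger $k$, splitting $E_k$ into $\lceil |E_k|/p\rceil\lesssim 4^k$ pieces of size at most $p$ only yields $2^{-k}\|\ind_{E_k}\|\lesssim 2^k M$, which diverges, while the Hilbert--Schmidt bound gives $2^{-k}\|\ind_{E_k}\|\leq 2^{-k}\sqrt{|E_k|}\lesssim\sqrt p$, a non-summable constant that can also vastly exceed $M$. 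Neither remedy you sketch resolves this, and you also misidentify the problematic regime: $|E_0|\leq 4p$ is already small, it is the \emph{large}-$k$ levels that blow up. The underlying reason the scheme fails is that you are only exploiting $M$ through sets of cardinality at most $p$, which loses too much when $|E_k|\gg p$.

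The paper's proof uses a different and essential ingredient that your argument never touches: the quantity $M$ controls the number of edges of $E$ in arbitrary combinatorial rectangles. Concretely (with $r:=p^2/M^2$), for $|J_1||J_2|<r$ one has $|E\cap(J_1\times J_2)|\leq(|J_1||J_2|)^{1/2}M$, and for larger rectangles $|E\cap(J_1\times J_2)|\lesssim M^2|J_1||J_2|/p$. The paper then fixes $(s,t)$, reduces the claim to the deterministic estimate $\sum_{(i,j)\in E}\min\{s_i^2t_j^2,\,M^2/p^2\}\lesssim M^2/p$, and proves that by sorting $|s|,|t|$ decreasingly and summing over dyadic index blocks $\{2^k,\dots,2^{k+1}-1\}$ using the two rectangle bounds above; the final upper bound then follows by splitting into heavy pairs $|s_it_j|\geq M/p$ (at most $Cp$ of them, handled by the triangle inequality and the definition of $M$) and light pairs (handled by Khintchine). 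Without the rectangle-counting input your approach has a real gap; with it, the dyadic-in-$c$ reformulation would still require substantial reworking to converge.
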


In the general case, a similar result holds. However, here the upper and lower bounds are not of the same order.
\begin{proposition}
\label{prop:szacradA}
For any matrix $A=(a_{ij})_{i,j\le n}$ and $p\geq 1$,
\[
\frac{1}{2}\max_{I\subset [n]\times[n], |I|\leq p}\|(|a_{ij}|\ind_{(i,j)\in I})\|
\leq 
\|A\|_{\ve,p}\lesssim \sqrt{\ln(p+1)} \max_{I\subset [n]\times[n], |I|\leq p}\|(|a_{ij}|\ind_{(i,j)\in I})\|.
\]
\end{proposition}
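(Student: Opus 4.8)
The lower bound is the easy direction and should essentially mirror the lower bound in Proposition \ref{prop:szacrad}. Fix a subset $I\subset[n]\times[n]$ with $|I|\le p$, and choose unit vectors $s,t$ attaining (up to the operator-norm definition) $\|(|a_{ij}|\ind_{(i,j)\in I})\|$. By comparing the Rademacher chaos $\sum_{(i,j)\in I}a_{ij}\ve_{ij}s_it_j$ to a single coordinate and using symmetrization/contraction, or more directly by noting that on the event where all the signs $\ve_{ij}$, $(i,j)\in I$, align with $a_{ij}s_it_j$ one recovers $\sum_{(i,j)\in I}|a_{ij}|s_it_j$, one gets that the $L_p$-norm (indeed already the $L_1$-norm, with the factor $\tfrac12$) of the chaos restricted to $I$ dominates $\tfrac12\|(|a_{ij}|\ind_{(i,j)\in I})\|$. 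Since $\|A\|_{\ve,p}$ is a supremum over $s,t$ of $L_p$-norms of the full chaos, and restricting the index set of a Rademacher chaos only decreases the $L_p$-norm of the conditional expectation while the sign pattern can be chosen freely, this yields the left inequality. Taking the maximum over admissible $I$ completes it.

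For the upper bound the plan is to reduce to the adjacency-matrix case of Proposition \ref{prop:szacrad} by a dyadic decomposition of the entries of $A$ according to magnitude. Write $a_{ij}=\sum_{\ell\in\zet}a_{ij}^{(\ell)}$ where $a_{ij}^{(\ell)}=a_{ij}\ind_{\{2^{\ell}\le |a_{ij}|<2^{\ell+1}\}}$, and let $E_\ell=\{(i,j):2^\ell\le|a_{ij}|<2^{\ell+1}\}$; on $E_\ell$ the magnitude is constant up to a factor $2$, so $\|A^{(\ell)}\|_{\ve,p}\sim 2^\ell\|\ind_{E_\ell}\|_{\ve,p}$, and Proposition \ref{prop:szacrad} identifies the latter with $2^\ell\max_{I\subset E_\ell,|I|\le p}\|(\ind_{(i,j)\in I})\|\le \max_{I\subset[n]^2,|I|\le p}\|(|a_{ij}|\ind_{(i,j)\in I})\|$ (enlarging $2^\ell$ to $|a_{ij}|$ on $I$). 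Then $\|A\|_{\ve,p}\le\sum_\ell\|A^{(\ell)}\|_{\ve,p}$ by the triangle inequality, but this naive bound has too many levels. The fix is the standard two-step splitting: separate the levels $\ell$ with $|E_\ell|\le p$ (call their union the ``small'' part, where one sums at most $\log$-many relevant scales after discarding negligible ones) from the ``large'' part $\bigcup_{|E_\ell|>p}E_\ell$, handle the large part directly as a single matrix whose $\|\cdot\|_{\ve,p}$ is controlled by the Hitczenko--Montgomery-Smith / Oleszkiewicz-type bound for Rademacher sums together with Proposition \ref{prop:szacrad}, and show the small part contributes at most $O(\sqrt{\ln(p+1)})$ copies of the target quantity. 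The $\sqrt{\ln(p+1)}$ is exactly the price of summing the $O(\log)$ dyadic levels whose cardinality is $\le p$, combined via Cauchy--Schwarz rather than the triangle inequality in $L_p$ — one uses that distinct dyadic levels, being disjointly supported, behave roughly orthogonally so their $L_p$-norms add in an $\ell_2$ fashion, producing $\sqrt{\#\text{levels}}\lesssim\sqrt{\ln(p+1)}$.

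The main obstacle is making the level-summation step rigorous and lossless except for the claimed $\sqrt{\ln(p+1)}$ factor: one must argue that after truncating away dyadic scales that are a factor $\mathrm{poly}(n)$ below the top scale (which contribute a negligible total), only $O(\log(\text{ratio}))=O(\log n)$ scales remain, and then — crucially — replace the trivial bound $O(\log n)$ by $O(\sqrt{\ln(p+1)})$ by exploiting the budget $|I|\le p$: a subset $I$ of size $\le p$ can meet at most $p$ cells, so effectively at most $\log(p+1)$ of the relevant dyadic levels can each carry a nonnegligible share of the extremal $I$, and a Cauchy--Schwarz / interpolation argument across those levels (using disjointness of supports to get near-orthogonality of the corresponding chaoses in $L_p$) converts the count $\log(p+1)$ of levels into the factor $\sqrt{\ln(p+1)}$. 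Getting the bookkeeping of ``which scales matter for a size-$p$ subset'' to interact correctly with the operator-norm maximum $\max_{|I|\le p}\|(|a_{ij}|\ind_{(i,j)\in I})\|$ — rather than with a sum of per-level maxima — is where the real care is needed.
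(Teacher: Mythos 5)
Your lower-bound argument is fine and matches the paper's: restricting to a fixed $I$ with $|I|\le p$ and conditioning on the event that all signs $\ve_{ij}$, $(i,j)\in I$, agree with $\mathrm{sgn}(s_it_j)$ (which has probability $2^{-|I|}\ge 2^{-p}$) gives $\|\sum_{(i,j)\in I}a_{ij}\ve_{ij}s_it_j\|_p\ge \frac12\sum_{(i,j)\in I}|a_{ij}s_it_j|$, and taking suprema yields the claim.

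For the upper bound, however, there is a genuine gap, and it is exactly at the step you yourself flag as delicate. You decompose $A$ by the magnitude of $a_{ij}$ alone and then want to combine the dyadic pieces $S_\ell=\sum_{(i,j)\in E_\ell}a_{ij}\ve_{ij}s_it_j$ via ``near-orthogonality in $L_p$'' to get an $\ell_2$-type combination $\bigl(\sum_\ell\|S_\ell\|_p^2\bigr)^{1/2}$ and hence a factor $\sqrt{\#\text{levels}}$. But disjoint supports give independence, and independent symmetric random variables do \emph{not} add in $\ell_2$ in $L_p$ for $p>2$: the best Khintchine-type bound is $\|\sum_\ell S_\ell\|_p\le\sqrt{p}\,\bigl(\sum_\ell\|S_\ell\|_p^2\bigr)^{1/2}$, and the extra $\sqrt{p}$ is fatal here (in the main application $p\sim\log n$). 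The triangle inequality only gives $\#\text{levels}$ rather than $\sqrt{\#\text{levels}}$, and interpolation between the two does not produce $\sqrt{\log(p+1)}$. There is also a smaller issue: the threshold should be at ratio $\sqrt{p}$ below the top scale (as in the paper's cut at $M/\sqrt{p}$), giving $O(\log p)$ levels; a $\mathrm{poly}(n)$ threshold only gives $O(\log n)$ levels, which is not enough. The paper's actual route is structurally different: after thresholding small entries at $M/\sqrt{p}$ and using Lemma \ref{lem:smallsupportrad} to restrict to $s,t$ with $|\supp(s)|,|\supp(t)|\le p$, it performs a dyadic decomposition adapted to the \emph{products} $|a_{ij}s_it_j|$ (not to $|a_{ij}|$), sets $I_k=\{(i,j):|a_{ij}s_it_j|\ge 2^{-k}\}$ with $k_0=\sup\{k:|I_k|\le p\}$, bounds $I_{k_0}$ directly by $M$, and outside $I_{k_0}$ uses Khintchine plus a counting argument --- partitioning $I_k$ into $\lesssim |I_k|/p$ chunks of size $\sim p$ whose contributions are each $\lesssim M$ --- to show $p\,2^{-2k}|I_k|\lesssim M^2$ for each of the $\sim\tfrac12\log_2 p$ relevant scales; summing and taking a square root gives the factor $\sqrt{\log p}$. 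This adaptivity to $s,t$ and the ``chunks of size $p$'' argument are the essential ideas your proposal is missing, and I do not see how to substitute an orthogonality claim for them.
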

We do not know whether the logarithmic factor is necessary. 

\medskip

\textbf{Organization of the paper.} In Section \ref{sec:proofs} we prove the main results of the paper, i.e. Theorems \ref{thm:lower} and \ref{thm:twosidedbound}.
In Section \ref{sec:radnorms} we study norms $\|A\|_{\ve,p}$ and establish  Propositions \ref{prop:szacrad} and \ref{prop:szacradA}. We also provide there estimates for  $\|A\|_{\ve,p}$ in the case when $A$ are adjacency matrices of hypercubes $\zet_2^d$  and more general discrete tori $\zet_m^d$.
The adjacency matrix of $\zet_m^d$ is not circulant, however it is very close to have such a property and 
Corollary \ref{cor:tor} shows that we may apply previously derived estimates to get two-sided bounds
on $\Ex\|(\ind_{i\sim j}\ve_{ij})_{i,j\in \zet_m^d}\|$. In the last section of the paper we extend 
Theorem \ref{thm:lower} and Conjecture \ref{conj:rademacher} to the case of random matrices with independent entries $X_{ij}$ satisfying the mild regularity condition $\|X_{ij}\|_{2p}\leq \alpha\|X_{ij}\|_p$ for $p\geq 2$. We show that results of \cite{HLY} imply that more general Conjecture \ref{conj:main} holds in the case when $X_{ij}$ are mixtures of Gaussian variables. We also show that to establish formulated conjectures  it is enough to prove a slightly weaker bound \eqref{eq:reduction}.

\section{Proofs of main results}
\label{sec:proofs}

In the sequel we will frequently compare $L_p$-norms of real and vector-valued Rademacher sums
$S=\sum_{k=1}^nx_k\ve_k$, where
$x_i$ are vectors from a normed space $(F,\|\ \|)$ and $\ve_i$ are independent symmetric $\pm 1$ r.v's.
The classical Khintchine (in the case when coefficients $x_i$ are real) and the Kahane-Khintchine inequalities, cf. \cite[Section 4.3]{LT}, state that for $p>q>0$,
\[
\|S\|_q\leq \|S\|_p\leq C_{p,q}\|S\|_q,
\]
where $\|S\|_q=(\Ex\|S\|^q)^{1/q}$ and  $C_{p,q}$ is a constant depending only on $p$ and $q$.
Moreover for $p>q>1$, $C_{p,q}\leq \sqrt\frac{p-1}{q-1}$, therefore for $p\geq 2$, $C_{p,2}\leq \sqrt{p}$ and
for $p\geq 1+(e-2)^{-1}$, $C_{2p,p}\leq \sqrt{e}$.

Markov's inequality yields $\Prob(\|S\|\geq e\|S\|_p)\leq e^{-p}$. Using the Paley-Zygmund inequality
\[
\Prob\Big(Z\geq\frac{1}{2} \Ex Z\Big)\geq \frac{1}{4} \frac{(\Ex Z)^2}{\Ex Z^2} \quad\mbox{for nonnegative r.v. $Z$}, 
\]
one may derive a reverse lower tail bound for $\|S\|$ (similar estimates can be found in \cite{DMS, LaSt}, we present details for the sake of completeness). For any $p\ge 1+(e-2)^{-1}$, by taking $Z=\|S\|^p$ we obtain
\begin{equation}
\label{eq:LowtailRad}
\Prob\Big(\|S\|\geq \frac{1}{2}\|S\|_p\Big)\geq \frac{1}{4}\left(\frac{\|S\|_p}{\|S\|_{2p}}\right)^{2p}\geq \frac{1}{4}e^{-p}.
\end{equation}

\subsection{Proof of Theorem \ref{thm:lower}}

We start with a simple observation.

\begin{lemma}
\label{lem:smallsupportrad}
For any $p\geq 2$,
\begin{align*}
&\sup_{\|s\|_2,\|t\|_2\leq 1}\left\|\sum_{ij}a_{ij}\ve_{ij}s_it_j\right\|_p
\\
&\leq \sup_{\substack{\|s\|_2\leq 1,|\supp(s)|\leq p,\\ \|t\|_2\leq 1, |\supp(t)|\leq p}}
\left\|\sum_{ij}a_{ij}\ve_{ij}s_it_j\right\|_p
+\max_{i}\left(\sum_{j}a_{ij}^2\right)^{1/2}+\max_{j}\left(\sum_{i}a_{ij}^2\right)^{1/2}.
\end{align*}
\end{lemma}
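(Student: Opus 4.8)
The plan is to truncate $s$ and $t$ at the level $1/\sqrt p$ and to absorb the two resulting error terms by Khintchine's inequality with constant $C_{p,2}\le\sqrt p$, which is available precisely because $p\ge 2$.

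Fix $s,t$ with $\|s\|_2,\|t\|_2\le 1$. First I would split $s=u+v$, where $u$ is the restriction of $s$ to $\{i:|s_i|>1/\sqrt p\}$ and $v:=s-u$. Since every nonzero coordinate of $u$ has square exceeding $1/p$, we get $|\supp(u)|\le p$ and $\|u\|_2\le\|s\|_2\le 1$, while $\|v\|_\infty\le 1/\sqrt p$ and $\|v\|_2\le 1$. Decompose accordingly
\[
\sum_{ij}a_{ij}\ve_{ij}s_it_j=\sum_{ij}a_{ij}\ve_{ij}u_it_j+\sum_{ij}a_{ij}\ve_{ij}v_it_j .
\]
The second term is a single Rademacher sum in the independent signs $(\ve_{ij})$ with coefficients $a_{ij}v_it_j$, so Khintchine's inequality bounds its $L_p$-norm by
\[
\sqrt p\Big(\sum_{ij}a_{ij}^2v_i^2t_j^2\Big)^{1/2}
\le \sqrt p\,\|v\|_\infty\Big(\max_j\sum_i a_{ij}^2\Big)^{1/2}\Big(\sum_j t_j^2\Big)^{1/2}
\le \max_j\Big(\sum_i a_{ij}^2\Big)^{1/2},
\]
where I used $\sum_{ij}a_{ij}^2v_i^2t_j^2=\sum_j t_j^2\sum_i a_{ij}^2v_i^2\le\|v\|_\infty^2(\max_j\sum_i a_{ij}^2)\sum_j t_j^2$ together with $\sqrt p\,\|v\|_\infty\le 1$ and $\|t\|_2\le 1$.

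Next I would apply the same truncation to $t$ inside the remaining term: write $t=w+z$, with $w$ the restriction of $t$ to $\{j:|t_j|>1/\sqrt p\}$, so that $|\supp(w)|\le p$, $\|w\|_2\le 1$, $\|z\|_\infty\le 1/\sqrt p$, $\|z\|_2\le 1$, and split $\sum_{ij}a_{ij}\ve_{ij}u_it_j=\sum_{ij}a_{ij}\ve_{ij}u_iw_j+\sum_{ij}a_{ij}\ve_{ij}u_iz_j$. The first piece has $L_p$-norm at most the supremum that is the first term on the right-hand side of the lemma, since $\|u\|_2,\|w\|_2\le 1$ and $|\supp(u)|,|\supp(w)|\le p$. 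For the second piece Khintchine's inequality again gives $L_p$-norm at most
\[
\sqrt p\Big(\sum_{ij}a_{ij}^2u_i^2z_j^2\Big)^{1/2}
\le \sqrt p\,\|z\|_\infty\Big(\max_i\sum_j a_{ij}^2\Big)^{1/2}\Big(\sum_i u_i^2\Big)^{1/2}
\le \max_i\Big(\sum_j a_{ij}^2\Big)^{1/2},
\]
using $\sum_{ij}a_{ij}^2u_i^2z_j^2=\sum_i u_i^2\sum_j a_{ij}^2z_j^2\le\|z\|_\infty^2(\max_i\sum_j a_{ij}^2)\sum_i u_i^2$ and $\|u\|_2\le 1$. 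Combining the three bounds by the triangle inequality and taking the supremum over all $s,t$ with $\|s\|_2,\|t\|_2\le 1$ proves the lemma.

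I do not expect a genuine obstacle — this is a truncation estimate — but two bookkeeping points matter if one wants the constant $1$ in front of the row and column terms. First, the threshold must be \emph{strict} ($|s_i|>1/\sqrt p$): keeping instead the $\lfloor p\rfloor$ largest coordinates would only yield $\|v\|_\infty\lesssim 1/\sqrt p$ up to a constant. Second, in each error term the factor $\|v\|_\infty^2$ (respectively $\|z\|_\infty^2$) must be pulled out against the correct marginal — the column sums $\sum_i a_{ij}^2$ for the $v$-term and the row sums $\sum_j a_{ij}^2$ for the $z$-term, leaving only the harmless factor $\sum_j t_j^2\le 1$ (respectively $\sum_i u_i^2\le 1$) — so that the $\sqrt p$ from Khintchine cancels exactly against $\|v\|_\infty\le 1/\sqrt p$. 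The sole external input is $C_{p,2}\le\sqrt p$ for $p\ge 2$, recorded in the preliminaries above.
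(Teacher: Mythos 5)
Your proof is correct and is essentially the same as the paper's: truncate $s$ and $t$ at the level $1/\sqrt p$, keep the large-coordinate block as a sum over supports of size at most $p$, and absorb the small-coordinate remainders by Khintchine with constant $\sqrt p$, which cancels exactly against $\|v\|_\infty\le 1/\sqrt p$. The only cosmetic difference is that you split $s$ and then $t$ sequentially (giving blocks $U^c\times[n]$, $U\times W^c$, $U\times W$), whereas the paper truncates both at once and partitions the index set into $[n]\times I_p(t)^c$, $I_p(s)^c\times I_p(t)$, $I_p(s)\times I_p(t)$; the resulting error estimates are identical.

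One small remark: your caveat that the threshold must be strict is not actually necessary. With the non-strict version $I_p(z)=\{i:|z_i|^2\ge p^{-1}\}$ one still gets $|I_p(z)|\le p$ from $\|z\|_2\le 1$, and the complementary coordinates satisfy $|z_i|^2<p^{-1}$, which is even better than $\le p^{-1}$; either convention yields the clean constant $1$.
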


\begin{proof}
For a vector $z\in \er^n$ put $I_p(z):=\{i\leq n\colon\ |z_i|^2\geq p^{-1}\}$. If $\|z\|_2\le 1$, then $|I_p(z)|\leq p$, so
\[
\sup_{\|s\|_2,\|t\|_2\leq 1}\left\|\sum_{i\in I_p(s),j\in I_p(t)}a_{ij}\ve_{ij}s_it_j\right\|_p
\leq \sup_{\substack{\|s\|_2\leq 1,|\supp(s)|\leq p,\\ \|t\|_2\leq 1, |\supp(t)|\leq p}}
\left\|\sum_{ij}a_{ij}\ve_{ij}s_it_j\right\|_p.
\]
The Khintchine inequality yields
\begin{align*}
\sup_{\|s\|_2,\|t\|_2\leq 1}\left\|\sum_{i\leq n,j\notin I_p(t)}a_{ij}\ve_{ij}s_it_j\right\|_p
&\leq \sup_{\|s\|_2,\|t\|_2\leq 1}\Big(p\sum_{i\leq n,j\notin I_p(t)}a_{ij}^2s_i^2t_j^2\Big)^{1/2}
\\
&\leq \sup_{\|s\|_2\leq 1}\Big(\sum_{i,j\leq n}a_{ij}^2s_i^2\Big)^{1/2}
=\max_{i}\left(\sum_{j}a_{ij}^2\right)^{1/2}.
\end{align*}
In a similar way we show that
\[
\sup_{\|s\|_2,\|t\|_2\leq 1}\left\|\sum_{i\notin I_p(s),j\in I_p(t)}a_{ij}\ve_{ij}s_it_j\right\|_p
\leq \max_{j}\left(\sum_{i}a_{ij}^2\right)^{1/2}.
\]
\end{proof}

\begin{proof}[Proof of Theorem \ref{thm:lower}]

The estimate
\begin{equation}
\label{eq:lower1}
\left(\Ex\|(X_{ij})\|^2\right)^{1/2}\geq
\max\Bigg\{ \max_{i}\left(\sum_{j} \Ex X_{ij}^2\right)^{1/2}, \max_{j}\left(\sum_{i} \Ex X_{ij}^2\right)^{1/2}\Bigg\}
\end{equation}
is trivial. Moreover, by the Khintchine-Kahane inequality,
$\Ex\|(a_{ij}\ve_{ij})\|\sim (\Ex\|(a_{ij}\ve_{ij})\|^2)^{1/2}$.

To establish the last term in the lower bound let us fix $1\leq k\leq n$. We need to show that
\[
\gamma:=\min_{I\subset [n],|I|\leq k}
\sup_{\|s\|_2,\|t\|_2\leq 1}\left\|\sum_{i,j\notin I}X_{ij}s_it_j\right\|_{\log (k+1)}\leq C
\Ex\|(X_{ij})\|.
\]

Observe that by the Khintchine inequality
\begin{align*}
\gamma
&\leq 
C\sqrt{\log(k+1)}\sup_{\|s\|_2,\|t\|_2\leq 1}\left(\sum_{ij}s_i^2t_j^2a_{ij}^2\right)^{1/2}
= C\sqrt{\log(k+1)}\max_{ij}|a_{ij}|
\\
&\leq C\sqrt{\log(k+1)}\Ex\|(X_{ij})\|,
\end{align*}
so we may  consider only $k$ large, in particular we may assume that $2\log(k+1)\leq \sqrt{k}$ and $\frac{1}{2}\log(k+1)>1+(e-2)^{-1}$.

If 
\[
\gamma\leq 2\Bigg(\max_{i}\Bigg(\sum_{j} a_{ij}^2\Bigg)^{1/2}
+\max_{j}\Bigg(\sum_{i} a_{ij}^2\Bigg)^{1/2}\Bigg)
\]
the estimate follows from the trivial bound \eqref{eq:lower1}. In the opposite case
for any $|I|\leq k$ by Lemma \ref{lem:smallsupportrad} 
(applied to $\sum_{i,j\notin I}$ instead of $\sum_{i,j}$) there exists a set $J$ of cardinality at most 
$2\log(k+1)\leq \sqrt{k}$
disjoint from $I$ and unit vectors $t,s$ such that
\[
\left\|\sum_{i,j\in J}X_{ij}t_is_j\right\|_{\log (k+1)}\geq \frac{1}{2}\gamma.
\]

Thus we may inductively construct disjoint sets  $I_l$ and unit vectors 
$s^{(l)}, t^{(l)}$, $1\leq l\leq \sqrt{k}$ such that 
\[
\|S_l\|_{\log (k+1)}\geq \frac{1}{2}\gamma,\quad \mbox{ where }\quad
S_l:=\sum_{i,j\in I_l} s_i^{(l)}t_j^{(l)}X_{ij}. 
\]

Let $p=\frac{1}{2}\log(k+1)$. Then $p\geq 1+(e-2)^{-1}$ and by the Khintchine inequality $\|S\|_{2p}\le\sqrt{e}\|S\|_p$. Thus the lower tail bound for Rademacher sums \eqref{eq:LowtailRad} yields
\begin{equation}
\label{eq:SlTail}
\Prob\Big(|S_l|\geq \frac{1}{4\sqrt{e}}\gamma\Big)\geq 
\Prob\Big(|S_l|\geq\frac{1}{2\sqrt{e}} \|S_l\|_{\log(k+1)}\Big)\geq
\Prob\Big(|S_l|\geq\frac{1}{2}\|S_l\|_{\frac{1}{2}\log(k+1)}\Big)\geq\frac{c}{\sqrt{k}}.
\end{equation}
Hence we have
\begin{align*}
\Ex\|(X_{ij})\|&\geq\frac{1}{4}\gamma\Prob\Big(\max_{l\leq\sqrt{k}}|S_l|\geq\frac{1}{4}\gamma\Big)=\frac{1}{4}\gamma\left(1-\prod_{l\leq\sqrt{k}}\Prob\Big(|S_l|<\frac{1}{4}\gamma\Big)\right)\\
&\geq\frac{1}{4}\gamma\left(1-\Big(1-\frac{c}{\sqrt{k}}\Big)^{\sqrt{k}}\right)\geq 
\frac{1}{4}(1-e^{-c})\gamma,
\end{align*}
where the consecutive steps follow by Chebyshev's inequality, independence of the r.v's $S_l$, the tail bound \eqref{eq:SlTail} and the inequality $1-y\leq e^{-y}$.
\end{proof}

\subsection{Proof of Theorem \ref{thm:twosidedbound}}

Let $1\le p_1\le\ldots\le p_d\le n/2$ be fixed natural numbers. Consider a circulant graph $G=(V,E)$ where $V=[n]$ and $(i,j)\in E$ if and only if $i-j=\pm p_k \mod n$ for some $1\le k\le d$. Although $G$ is an undirected graph, we treat it as a directed graph for notational simplicity. It is a regular graph of degree $2d$ or $2d-1$, when $p_d\neq n/2$ or $p_d=n/2$ respectively. Therefore it has either $|E|=2dn$ or $(2d-1)n$ (directed) edges. The matrix $(\ind_E(i,j))_{i,j\le n}$ is the adjacency matrix of $G$. For simplicity of notation we will denote it by $\ind_E$. If $I\subset V$, then we will also write just $I$ for a subgraph $(I,(I\times I)\cap E)\subset G$.

For fixed $k\in[n]$ we introduce the following two subsets of $[n]$:
\begin{align*}
U_k&=\{l\in[n]: l=k+\sum_{i\in I}p_i \mod n\ \mathrm{for\ some}\ \emptyset\subseteq I\subseteq [d]\},\\
D_k&=\{l\in[n]: l=k-\sum_{i\in I}p_i \mod n\ \mathrm{for\ some}\ \emptyset\subseteq I\subseteq [d]\}.
\end{align*}
Note that the cardinalities of $U_k$ and $D_k$ do not depend on $k$ and they are both equal $m\le 2^d$. Observe also that for any 
$l\in D_k$, there are at least $d$ distinct elements of $D_k$ connected to $l$ with an edge. Moreover $i\in D_k$ if and only if $k\in U_i$, hence any $i\in [n]$ belongs to exactly $m$ of the sets $D_k,k\in[n]$.

There is a significant similarity between $U_k,D_k$ (as subgraphs of $G$) and the hypercube $\zet_2^d$.  
If $\sum_{i\in I}p_i\neq\sum_{j\in J}p_j \mod n$ whenever $I,J\subseteq[n]$ and $I\neq J$, then maps $\ind_I\mapsto k\pm\sum_{i\in I} p_i \ \mod n$ are
 isomorphisms between $\zet_2^d$ and $U_k$ or $D_k$ respectively.  
 Otherwise, the maps are not injective and two vertices of $\zet_2^d$ may be pasted into one vertex of $U_k$ or $D_k$, which inherits neighbours of both. Nevertheless, the degree of a vertex in $U_k$ or $D_k$ never exceeds its degree in $G$, which is at most $2d$. Due to this structural similarity, we will refer to $U_k$ and $D_k$ as `the upper cube' and `the lower cube' at $k$ respectively.

For a fixed sequence $k_1,\ldots,k_s\in[n]$ we define the modified, disjoint version of the lower cubes:
\[
I_1=D_{k_1},\qquad I_{l}=D_{k_l}\setminus(\bigcup_{r<l}D_{k_r}),\quad l=2,\ldots,s.
\]
We are going to show that $k_1,\ldots,k_s$ can be chosen in such way that $\frac{|I_l|}{|D_{k_l}|}=\frac{|I_l|}{m}\ge\frac{7}{8}$ for any $l$, while at least $\frac{1}{32}$ of the edges from $E$ connects vertices belonging to some $I_l, 1\le l\le s$.

\begin{lemma}
\label{lem:exclusion}
Fix a number $c\in(0,1)$ and a set $J\subset[n]$ of cardinality $|J|\le cn$. Then there is $k\in[n]$ such that $|D_k\setminus J|\ge(1-c)m$.
\end{lemma}
\begin{proof}
Recall that any $i\in[n]$ belongs to exactly $m$ of the sets $D_k,k\in[n]$. Therefore
\[
(1-c)n\le|[n]\setminus J|=\sum_{i\in[n]\setminus J}\sum_{k=1}^n \frac{1}{m}\ind_{D_k}(i)=\frac{1}{m}\sum_{k=1}^n \sum_{i\in[n]\setminus J}\ind_{D_k}(i)=\frac{1}{m}\sum_{k=1}^n |D_k\setminus J|.
\]
\end{proof}

\begin{lemma}
\label{lem:goodsequence}
There are $s\geq\frac{n}{8m}$ and $k_1,\ldots,k_s\in[n]$ such that $|I_l|\ge\frac{7}{8}m$ for $l=1,\ldots,s$. Moreover,
\[
\left|\bigcup_{l=1}^s \left(I_l\times I_l\right)\cap{E}\right|\ge\frac{dn}{16}.
\]
\end{lemma}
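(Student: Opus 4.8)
\textbf{Proof proposal for Lemma \ref{lem:goodsequence}.}

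The plan is to construct the sequence $k_1,\ldots,k_s$ greedily, using Lemma \ref{lem:exclusion} at each step to guarantee the density condition $|I_l|\ge\frac78 m$, and then to account for the edges. First I would run the greedy procedure: having chosen $k_1,\ldots,k_{l-1}$, set $J=\bigcup_{r<l}D_{k_r}$; as long as $|J|\le\frac18 n$, Lemma \ref{lem:exclusion} with $c=\frac18$ produces a $k_l\in[n]$ with $|D_{k_l}\setminus J|\ge\frac78 m$, i.e. $|I_l|\ge\frac78 m$. The procedure stops at the first step $s+1$ where $|\bigcup_{r\le s}D_{k_r}|>\frac18 n$ (it must stop since the union grows, but we only need that it runs at least until the union is large). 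At the stopping point $\bigl|\bigcup_{l\le s}I_l\bigr|=\bigl|\bigcup_{l\le s}D_{k_l}\bigr|$, and since before the last successful step the union had size $\le\frac18 n$ and each $I_l$ has size $\le m$, we get $\frac18 n<\bigl|\bigcup_{l\le s}I_l\bigr|$... more carefully, I would note $\bigl|\bigcup_{l\le s}I_l\bigr|=\sum_{l\le s}|I_l|\le sm$, and also that we can keep going while the union is $\le\frac18 n$, so in particular we may take $s$ with $\frac{n}{16m}\le\sum_{l\le s}|I_l|$, forcing $s\ge\frac{n}{16m}$; to reach the stated $s\ge\frac{n}{8m}$ one uses $|I_l|\le m$ together with the fact that the union exceeds $\frac18 n$ only after step $s$, so $sm\ge\sum_{l\le s}|I_l|=\bigl|\bigcup_{l\le s}D_{k_l}\bigr|> \frac18 n - m$ once we have gone just past the threshold; choosing the threshold appropriately (e.g. run until the union first exceeds $\frac18 n$, then $s\ge \frac{n}{8m}$ up to the harmless $-1$ absorbed by $n\ge 2m$) gives the claim.

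Next I would count the edges inside $\bigcup_l I_l$. The key structural fact recorded before the lemma is that for each vertex $l\in D_k$ there are at least $d$ distinct neighbours of $l$ (in $G$) that also lie in $D_k$. So consider the disjointified sets: if $v\in I_l$, then $v\in D_{k_l}$, so $v$ has $\ge d$ neighbours in $D_{k_l}$; those neighbours need not all lie in $I_l$, but each lies in $D_{k_l}=I_l\cup(I_l^c\cap D_{k_l})$, and $|I_l^c\cap D_{k_l}|=|D_{k_l}|-|I_l|\le m-\frac78 m=\frac18 m$. Hmm — a cleaner route: I would instead bound the edges leaving $\bigcup_l I_l$ by bounding, for each $l$, the edges from $I_l$ that go to vertices not in any $I_r$. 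Actually the most economical argument: each vertex $v\in\bigcup_l I_l$ lies in a unique $I_l$, has $\ge d$ neighbours inside $D_{k_l}$, and $D_{k_l}\setminus I_l$ has size $\le\frac18 m$; summing the "bad" neighbours over $v\in I_l$ gives at most $|I_l|\cdot 2d$ ordered pairs pointing into $D_{k_l}\setminus I_l$, but that's too weak. Better: count ordered pairs $(v,w)$ with $v\in I_l$, $w\in D_{k_l}$, $vw\in E$: there are $\ge d|I_l|$ of them; among these, the ones with $w\notin I_l$ number at most (edges into $D_{k_l}\setminus I_l$) $\le 2d\cdot|D_{k_l}\setminus I_l|\le 2d\cdot\frac18 m=\frac{dm}{4}$. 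Hence at least $d|I_l|-\frac{dm}{4}\ge d\cdot\frac78 m-\frac{dm}{4}=\frac{5dm}{8}$ ordered pairs $(v,w)\in E$ with both endpoints in $I_l$. Summing over $l$ and using $s\ge\frac{n}{8m}$ gives $\bigl|\bigcup_l (I_l\times I_l)\cap E\bigr|\ge s\cdot\frac{5dm}{8}\ge\frac{n}{8m}\cdot\frac{5dm}{8}=\frac{5dn}{64}\ge\frac{dn}{16}$, since the $I_l\times I_l$ are disjoint as the $I_l$ are disjoint.

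The main obstacle is bookkeeping the constants so that everything closes: the interplay between the threshold $cn$ in the greedy stopping rule, the resulting lower bound on $s$, the resulting per-set edge count $d|I_l|-2d|D_{k_l}\setminus I_l|$, and the final target $\frac{dn}{16}$. One has a bit of slack ($\frac{5}{64}$ vs $\frac{1}{16}$), so minor edge effects — the possible degree drop to $2d-1$ when $p_d=n/2$, the off-by-one in $s\ge\frac{n}{8m}$ from crossing the threshold, and the assumption $n\ge 2m$ (trivial when $d$ is bounded, and when $m=2^d$ is huge the statement is vacuous or needs $n$ large) — are absorbed. I would also double-check that the double-counting of the "bad" neighbours $w\notin I_l$ is valid: each such $w$ lies in $D_{k_l}\setminus I_l$, and each vertex in $G$ has at most $2d$ incident edges, so the number of edges from $I_l$ into $D_{k_l}\setminus I_l$ is at most $2d|D_{k_l}\setminus I_l|$, which is exactly what I used.
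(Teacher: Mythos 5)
Your proposal is correct and follows essentially the same strategy as the paper: a greedy construction of $k_1,\dots,k_s$ via Lemma~\ref{lem:exclusion} with $c=\tfrac18$, combined with a per-block edge count that uses the $\ge d$ in-cube neighbours of each $D_k$-vertex and the $\le 2d$ global degree bound. The only cosmetic differences are that the paper fixes $s=\lceil n/(8m)\rceil$ at the outset rather than phrasing it as a stopping rule, and that the paper lower-bounds $|(I_l\times I_l)\cap E|$ by starting from $|(D_{k_l}\times D_{k_l})\cap E|\ge dm$ and subtracting edges touching $D_{k_l}\setminus I_l$ (yielding $dm/2$), whereas you count pairs $(v,w)$ with $v\in I_l$, $w\in D_{k_l}$ and subtract those landing outside $I_l$ (yielding $5dm/8$); both close the argument.
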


\begin{proof} 
Let $s:=\lceil \frac{n}{8m}\rceil$. We construct inductively $k_1,\ldots,k_s$. For $k_1$ we choose arbitrary element of $[n]$.
Assume that $k_1,\ldots,k_r$ are chosen and $r<\frac{n}{8m}$. Then the set $J=\bigcup_{l\le r}D_{k_l}$ has at most 
$rm<\frac{1}{8}n$ elements. Therefore, by Lemma \ref{lem:exclusion}, there is $k_{r+1}\in[n]$ such that $D_{k_{r+1}}\setminus J$ has at least $\frac{7}{8}m$ elements.

Notice that since $I_l\subseteq D_{k_l}$, we have $|D_{k_l}\setminus I_l|=|D_{k_l}|-|I_l|\le\frac{1}{8}m$. Recalling that $|D_{k_l}|=m$ and each vertex has degree at most $2d$, we have
\begin{align*}
|(I_l\times I_l)\cap{E}|&\ge|(D_{k_l}\times D_{k_l})\cap{E}|-|([n]\times(D_{k_l}\setminus I_l))\cap{E}|-|((D_{k_l}\setminus I_l)\times[n])\cap{E}|\\
&\ge dm-2d\frac{m}{8}-2d\frac{m}{8}=\frac{dm}{2}.
\end{align*}
Since $I_l,1\le l\le s$ are disjoint and $s>\frac{n}{8m}$, we obtain the result.
\end{proof}

It follows that the graph $G$ contains a subgraph $G'=(V,E')$ which consists of $s$ mutually unconnected parts 
{$(I_l, (I_l\times I_l)\cap E),l=1,\ldots,s$ having at most $2^d$ vertices each, and it contains at least 3\% of the edges from $E$. In other words, the incidence matrix $I'$ of the graph $G'$ is a block diagonal matrix (possibly after permutation of rows and columns), which cuts out at least 3\% of ones from $\ind_{{E}}$. We are going to further improve this result. In what follows we write $(a_{ij})\le (b_{ij})$ if $a_{ij}\le b_{ij}$ for any $i,j$.

\begin{lemma}
\label{lem:meandiag}
There are block diagonal matrices $B_1,\ldots,B_N$, $N<\infty$, with blocks of size at most $2^d$, such that $B_k$ is an incidence matrix of a graph $(V,E_k),E_k\subset E,k=1,\ldots,N$ and
\[
\frac{1}{32}\ind_{{E}}\le\frac{1}{N}\sum_{k=1}^N B_k.
\]
\end{lemma}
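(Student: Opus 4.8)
The plan is to exploit the rotational symmetry of the circulant graph $G$ together with the single block-diagonal subgraph $G'$ produced by Lemma \ref{lem:goodsequence}. First I would observe that $G$ is invariant under the cyclic shift $\tau\colon i\mapsto i+1 \mod n$ on the vertex set, and that this shift acts transitively on $[n]$. Given the incidence matrix $I'=(\ind_{(i,j)\in E'})$ of $G'$ from Lemma \ref{lem:goodsequence}, which satisfies $I'\le \ind_E$, covers at least $\frac{dn}{16}$ of the (directed) edges of $E$, and is block diagonal with blocks of size at most $2^d$ (the blocks being the disjoint sets $I_1,\ldots,I_s$), I would set $B_k$ to be the incidence matrix of the shifted graph $\tau^k(G')$, i.e. the graph on $[n]$ with edge set $E_k=\{(i+k,j+k \bmod n)\colon (i,j)\in E'\}$, for $k=0,1,\ldots,n-1$; so $N=n$. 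Each $B_k$ is block diagonal with blocks $\tau^k(I_1),\ldots,\tau^k(I_s)$ of size at most $2^d$, and $E_k\subset E$ since $E$ is shift-invariant.

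The second step is the averaging identity. Since each directed edge $(i,j)\in E$ is mapped to $(i+k,j+k)$, which again lies in $E$, and the preimage of a fixed edge $(i,j)$ under $\tau^k$ is the single edge $(i-k,j-k)$, the number of $k\in\{0,\ldots,n-1\}$ for which $(i,j)\in E_k$ equals the number of $k$ for which $(i-k,j-k)\in E'$, i.e. it equals the total edge count $|E'|\ge \frac{dn}{16}$ — but this count is the same for every edge $(i,j)\in E$ by transitivity of the shift. More precisely, $\sum_{k=0}^{n-1}(B_k)_{ij}=|E'|$ for every $(i,j)\in E$ and $=0$ otherwise. Hence $\frac{1}{n}\sum_{k=0}^{n-1}B_k=\frac{|E'|}{n}\ind_E\ge \frac{d/16\cdot n}{n}\ind_E=\frac{d}{16}\ind_E$ as a matrix inequality; dividing through, $\frac{1}{n}\sum_{k=0}^{n-1}B_k\ge\frac{d}{16}\ind_E\ge\frac{1}{32}\ind_E$ once $d\ge 1$, which holds. (Actually $\frac{d}{16}\ge\frac{1}{16}>\frac{1}{32}$, so the constant is comfortable; if one wants exactly $\frac{1}{32}$ the inequality is immediate.)

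The main subtlety — really the only thing to check carefully — is that the shifted pieces $\tau^k(I_1),\ldots,\tau^k(I_s)$ are still pairwise disjoint and still have size at most $2^d$, so that each $B_k$ genuinely is block diagonal with the required block size; this is clear because $\tau^k$ is a bijection of $[n]$, so disjointness and cardinalities are preserved, and $|I_l|\le m\le 2^d$ by construction. One should also note $E_k\subset E$ uses precisely the circulant structure: $(i,j)\in E'\subset E$ means $i-j\equiv \pm p_r \bmod n$ for some $r$, and then $(i+k)-(j+k)\equiv \pm p_r \bmod n$ as well, so $(i+k,j+k)\in E$. No obstacle beyond these bookkeeping points arises; the argument is a clean symmetrization of the single good subgraph over the cyclic group $\zet_n$.
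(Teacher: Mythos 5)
Your construction of the $B_k$ by cyclically shifting the subgraph $G'$ from Lemma \ref{lem:goodsequence} is exactly the paper's construction, but the averaging step contains a genuine error. You claim that for $(i,j)\in E$, the quantity $\sum_{k=0}^{n-1}(B_k)_{ij}$ equals $|E'|$, asserting this count is the same for every edge ``by transitivity of the shift.'' That is not correct: as $k$ ranges over $\{0,\ldots,n-1\}$ the preimage $(i-k,j-k)$ traces only the $n$ edges in the \emph{same band} as $(i,j)$, i.e.\ those $(i',j')$ with $i'-j'\equiv i-j \pmod n$. So $\sum_{k}(B_k)_{ij}=|E'\cap\{(i',j'):i'-j'\equiv i-j\}|$, the number of $E'$-edges in that particular band. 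Lemma \ref{lem:goodsequence} only bounds the \emph{total} $|E'|\ge dn/16$; it says nothing about how those edges distribute among the $2d$ bands. If $E'$ happened to be concentrated in a few bands, an edge of $E$ in a sparse band would receive almost no coverage from the shifted copies, and the pointwise inequality would fail. A quick sanity check exposes the problem: you conclude $\frac{1}{n}\sum_k B_k \ge \frac{d}{16}\ind_E$, but the left-hand side has $0$-$1$ entries averaged over $n$ matrices, so every entry lies in $[0,1]$; for $d>16$ your inequality is literally impossible.

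The missing ingredient is a per-band coverage estimate, and this is precisely what the paper supplies and what your argument skips. The paper first proves \eqref{eq:mean1}, that the shifted \emph{full} lower cubes $D_l$ already cover every edge of $E$ at least $m/2$ times out of $n$; the proof of this uses the explicit hypercube structure (each $i\in D_k$ gives $i+p_r\in D_k$ or $i+p_r\in D_{k+p_r}$ depending on whether $p_r$ appears in the representation of $i$). Then it uses the simple counting identity \eqref{eq:mean2} to bound the loss from trimming $D_{k_l}$ down to $I_l$, obtaining \eqref{eq:mean3}, and only then sums over $l$ and uses $s\ge n/(8m)$. You would need to rebuild this per-band argument; it cannot be replaced by the total-edge-count symmetrization you propose.
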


\begin{proof}
We set $N=n$ and as $B_k$, $k=1,\ldots, n$ we take the 
adjacency matrix of the subgraph $\bigcup_{l\le s}(I_l,(I_l\times I_l)\cap E)\subset G$, with coordinates shifted cyclically $k$ times. In the whole proof, if we write `+', we mean addition $\mod n$. In particular, here for $X\subset\mathbb{N}$ and $y\in\mathbb{Z}$ we write $X+y=\{x+y \mod n:x\in X\}$. 

We start with deriving the following estimate
\begin{equation}
\label{eq:mean1}
A:=\frac{1}{n}\sum_{k=1}^n \ind_{((D_l+k)\times(D_l+k))\cap{E}}\ge\frac{m}{2n}\ind_{{E}}\quad\ \mathrm{for\ any}\ l\in[n].
\end{equation}
To this aim  observe first that $D_l+k=D_{l+k}$. Hence we have
\[
A=\frac{1}{n}\sum_{k=1}^n \ind_{(D_{l+k}\times D_{l+k})\cap{E}}=\frac{1}{n}\sum_{k=1}^n \ind_{(D_k\times D_k)\cap{E}}.
\]
Let us fix $r\in[d], i\in [n]$. Let $k\in [n]$ satisfy $i\in D_k$. By the definition of the lower cubes, there is $J\subseteq[d]$ such that $i=k-\sum_{j\in J} p_j$. If $r\in J$, then $i+p_r=k-\sum_{j\in J\setminus\{r\}}p_j\in D_k$, so $(i,i+p_r)\in D_k\times D_k$. 
Otherwise $i=k+p_r-\sum_{j\in J\cup\{r\}}p_j$ and $i+p_r=k+p_r-\sum_{j\in J}p_j$, so $(i,i+p_r)\in D_{k+p_r}\times D_{k+p_r}$.
Since $|\{k\in [n]\colon i\in D_k\}|=m$ we get
\[
A=\frac{1}{2n}\sum_{k=1}^n (\ind_{(D_k\times D_k)\cap{E}}+\ind_{(D_{k+p_r}\times D_{k+p_r})\cap{E}})
\geq \frac{m}{2n}\ind_{(i,i+p_r)}.
\]
In a similar way we show that $A\geq \frac{m}{2n}\ind_{(i,i-p_r)}$ for any $r\in[d], i\in [n]$ and \eqref{eq:mean1} easily follows.

Since $i\in J+k$ if and only if $k\in i-J$ and $|i-J|=|J|$ we have
\begin{equation}
\label{eq:mean2}
\frac{1}{n}\sum_{k=1}^n \ind_{((J+k)\times[n])\cap{E}}=\frac{|J|}{n}\ind_{{E}}\quad\mathrm{for\ any}\ J\subset[n].
\end{equation}

Now fix $l\in[n]$ and $I\subset D_l$ such that $|D_l\setminus I|\le cm$. Then
\[
\ind_{((I+k)\times(I+k))\cap{E}}\ge\ind_{((D_l+k)\times (D_l+k))\cap{E}}-\ind_{((D_l\setminus I+k)\times[n])\cap{E}}-\ind_{([n]\times(D_l\setminus I+k))\cap{E}},
\]
hence  \eqref{eq:mean1} and \eqref{eq:mean2} imply that
\begin{equation}
\label{eq:mean3}
\frac{1}{n}\sum_{k=1}^n \ind_{((I+k)\times(I+k))\cap{E}}\ge\frac{m(1-4c)}{2n}\ind_{{E}}.
\end{equation}
Let us take $k_1,\ldots,k_s$ as in Lemma \ref{lem:goodsequence} and set
\[
B_k:=\sum_{l=1}^s \ind_{((I_l+k)\times(I_l+k))\cap{E}}.
\]
Then, since $|D_{k_l}\setminus I_l|\le\frac{1}{8}m$ and $s\ge\frac{n}{8m}$, estimate \eqref{eq:mean3} implies that
\[
\frac{1}{n}\sum_{k=1}^n B_k\ge s\frac{m(1-4/8)}{2n}\ind_{{E}}\ge\frac{1}{32}\ind_{{E}}.
\]
\end{proof}

In the next lemma we prove a useful norm estimate for block diagonal matrices with blocks of fixed size. We are going to use the Hadamard product: $C=A\cdot B$ if $c_{ij}=a_{ij}b_{ij}$. By $\varepsilon=(\varepsilon_{ij})$ we denote a matrix with independent symmetric $\pm 1$ entries (its size may change from line to line). Recall definition \eqref{eq:defAvep} of the
norm $\|A\|_{\varepsilon,p}$.

\begin{lemma}
\label{lem:blocknorm}
Assume that $A$ is a block diagonal matrix of size $n\times n$, $n\geq 2$, with matrices $A_1,\ldots,A_m$ on the diagonal. 
Then\\
$(i)$ for any $p\ge 1$,
\[
\|A\|_{\varepsilon,p}=\max_{l\le m}\|A_l\|_{\varepsilon,p},
\]
$(ii)$ for any $m\le n$,
\[
\Ex\|\varepsilon\cdot A\|\le C\max_{l\le m}\left(\Ex\|\varepsilon\cdot A_l\|+\|A_l\|_{\varepsilon,\log(n+1)}\right)
\le C\left(\max_{l\le m}\Ex\|\varepsilon\cdot A_l\|+\|A\|_{\varepsilon,\log(n+1)}\right).
\]
\end{lemma}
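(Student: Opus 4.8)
Part $(i)$ is elementary. The bound $\|A\|_{\varepsilon,p}\ge\max_{l\le m}\|A_l\|_{\varepsilon,p}$ follows by testing against unit vectors $s,t$ supported on the index set $B_l$ of the $l$-th block: since all entries of $A$ outside the blocks vanish, $\sum_{ij}a_{ij}\varepsilon_{ij}s_it_j$ then reduces to the corresponding bilinear form of $A_l$. For the converse I would take arbitrary unit vectors $s,t$, write $[n]=B_1\sqcup\dots\sqcup B_m$, decompose $s=\sum_l s^{(l)}$, $t=\sum_l t^{(l)}$ with $s^{(l)},t^{(l)}$ supported on $B_l$, and put $\alpha_l=\|s^{(l)}\|_2$, $\beta_l=\|t^{(l)}\|_2$, so that $\sum_l\alpha_l^2=\sum_l\beta_l^2=1$. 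Block diagonality gives $\sum_{ij}a_{ij}\varepsilon_{ij}s_it_j=\sum_{l}S_l$ with $S_l=\alpha_l\beta_l\sum_{i,j\in B_l}a_{ij}\varepsilon_{ij}(s^{(l)}_i/\alpha_l)(t^{(l)}_j/\beta_l)$, hence $\|S_l\|_p\le\alpha_l\beta_l\|A_l\|_{\varepsilon,p}$, and the triangle inequality in $L_p$ ($p\ge1$) together with $\sum_l\alpha_l\beta_l\le(\sum_l\alpha_l^2)^{1/2}(\sum_l\beta_l^2)^{1/2}\le1$ yields $\|\sum_l S_l\|_p\le\max_{l\le m}\|A_l\|_{\varepsilon,p}$. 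Taking the supremum over $s,t$ proves $(i)$.

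For $(ii)$, the operator norm of a block diagonal matrix equals the maximum of the norms of its diagonal blocks, and $\varepsilon\cdot A$ is block diagonal with blocks $\varepsilon\cdot A_1,\dots,\varepsilon\cdot A_m$; thus $\|\varepsilon\cdot A\|=\max_{l\le m}Z_l$ with $Z_l:=\|\varepsilon\cdot A_l\|$. Since $\max_l Z_l^{\log(n+1)}\le\sum_l Z_l^{\log(n+1)}$ and $m\le n$,
\[
\Ex\|\varepsilon\cdot A\|=\Ex\max_{l\le m}Z_l\le\Big(\sum_{l\le m}\Ex Z_l^{\log(n+1)}\Big)^{1/\log(n+1)}\le m^{1/\log(n+1)}\max_{l\le m}\|Z_l\|_{\log(n+1)}\le e\max_{l\le m}\|Z_l\|_{\log(n+1)}.
\]
Hence both inequalities in $(ii)$ reduce --- the second one also using $(i)$ and $\max_l(x_l+y_l)\le\max_l x_l+\max_l y_l$ --- to the estimate
\[
\big\|\,\|\varepsilon\cdot A_l\|\,\big\|_p\le C\big(\Ex\|\varepsilon\cdot A_l\|+\|A_l\|_{\varepsilon,p}\big),\qquad p\ge1,
\]
applied with $p=\log(n+1)$.

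The crux is this last estimate. Writing $\|\varepsilon\cdot A_l\|=\sup_{\|u\|_2,\|v\|_2\le1}X_{u,v}$ where $X_{u,v}=\sum_{ij}(A_l)_{ij}\varepsilon_{ij}u_iv_j$ is a Rademacher sum, it asserts that above its mean $\|\varepsilon\cdot A_l\|$ deviates only on the scale $\sup_{u,v}\|X_{u,v}\|_p=\|A_l\|_{\varepsilon,p}$ of the single worst bilinear form --- a concentration statement for the supremum of a Bernoulli process. I expect this to be the delicate point, because the off-the-shelf tools are not sharp enough. Talagrand's concentration inequality for convex Lipschitz functions, applied to $\varepsilon\mapsto\|\varepsilon\cdot A_l\|$ (which is convex and Lipschitz, with $\ell_2$-Lipschitz constant the largest absolute value $\rho_l$ of an entry of $A_l$), only yields a deviation of order $\rho_l\sqrt p$, while the Talagrand--Bousquet inequality for suprema of empirical processes yields $C(\Ex\|\varepsilon\cdot A_l\|+p\rho_l)$; since $\|A_l\|_{\varepsilon,p}$ may be as small as $\rho_l$, either bound overshoots the target by as much as a factor $\sqrt{\log(n+1)}$. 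The reason is that the extremal direction $u=e_i,\ v=e_j$ realizing the largest entry is degenerate: there $X_{u,v}$ is a one-term Rademacher sum, with $L_p$-norm only $\rho_l$ rather than $\sqrt p\,\rho_l$, so no genuine fluctuation occurs in that direction. What must be used is precisely the boundedness of the $\varepsilon_{ij}$, which prevents short Rademacher sums from growing in $L_p$.

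To prove the estimate I would try one of two routes. One is a refined moment (or concentration) inequality for $\sup_{u,v}X_{u,v}$ adapted to Bernoulli processes, in which the relevant "weak variance" parameter is measured through $\|A_l\|_{\varepsilon,p}$ rather than through $\rho_l\sqrt p$. The other is a dyadic decomposition of $A_l$ by the magnitude of its entries: by Proposition~\ref{prop:szacradA} a bound on $\|A_l\|_{\varepsilon,p}$ forces every $p$-entry submatrix of $A_l$ to have small operator norm, so the part of $A_l$ made of its largest entries has bounded row and column degrees and its randomized operator norm is, up to constants, deterministic, while the complementary small-entry part is handled directly by Khintchine's inequality; summing over dyadic magnitude scales (at a logarithmic cost that can be absorbed into $p=\log(n+1)$, or avoided by sharper bookkeeping) should give the claim. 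Once the estimate is in hand, the reduction above completes the proof of $(ii)$.
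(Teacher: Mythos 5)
Your proof of part $(i)$ is correct and complete, and your reduction in part $(ii)$ --- passing to $L_{\log(n+1)}$ moments, pulling the maximum over blocks inside via $m^{1/\log(n+1)}\le e$, and then reducing to the single-block moment estimate $\bigl\|\,\|\varepsilon\cdot A_l\|\,\bigr\|_p \le C\bigl(\Ex\|\varepsilon\cdot A_l\|+\|A_l\|_{\varepsilon,p}\bigr)$ --- exactly matches the argument in the paper. Your diagnosis of why Talagrand's convex-Lipschitz concentration and the Bousquet inequality overshoot (they measure the fluctuation through $\rho_l\sqrt p$ or $p\rho_l$ rather than through the $L_p$-norm of the worst individual bilinear form) is also accurate.

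The gap is that you stop short of proving the single-block estimate, presenting instead two speculative ``routes.'' The estimate you need is not something you have to invent: it is precisely the comparison of weak and strong moments for suprema of Bernoulli processes,
\[
\Bigl(\Ex\sup_{t\in T}\Bigl|\sum_i t_i\varepsilon_i\Bigr|^p\Bigr)^{1/p} \le C\Bigl(\Ex\sup_{t\in T}\Bigl|\sum_i t_i\varepsilon_i\Bigr| + \sup_{t\in T}\Bigl(\Ex\Bigl|\sum_i t_i\varepsilon_i\Bigr|^p\Bigr)^{1/p}\Bigr),
\]
due to Dilworth and Montgomery-Smith (Ann. Probab. 1993), with a more general version in Lata{\l}a--Strzelecka (Mathematika 2018, Theorem 1.1). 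The paper invokes this result directly, applied with the index set $T=\{((A_l)_{ij}u_iv_j)_{ij}:\|u\|_2,\|v\|_2\le1\}$; combined with your reduction it closes the proof immediately. Your first proposed route (a refined moment inequality whose ``weak variance'' parameter is $\sup_{u,v}\|X_{u,v}\|_p$) is essentially a restatement of this theorem, so you were close; but as written the argument is incomplete, and your second route (dyadic decomposition of $A_l$ by entry magnitude) carries a logarithmic loss that you yourself acknowledge you do not know how to remove, so it cannot be counted as a proof.
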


\begin{proof}
The proof of $(i)$ is an easy exercise. For $(ii)$ we set $p=\log(n+1)$ and observe that
\begin{align*}
\Ex\|\varepsilon\cdot A\|&=\Ex\max_{l\le m}\|\varepsilon\cdot A_l\|\le\left(\Ex\max_{l\le m}\|\varepsilon\cdot A_l\|^p\right)^{1/p}\le\left(\sum_{l=1}^m\Ex\|\varepsilon\cdot A_l\|^p\right)^{1/p}\\
&\le m^{1/p}\max_{l\le p}(\Ex\|\varepsilon\cdot A_l\|^p)^{1/p}\le e\max_{l\le m}(\Ex\|\varepsilon\cdot A_l\|^p)^{1/p}.
\end{align*}

A well known result for Bernoulli processes (cf. \cite{DMS} or more general bound \cite[Theorem 1.1]{LaSt}) states that
\[
\left(\Ex\sup_{t\in T} \left|\sum_i t_i\varepsilon_i\right|^p\right)^{1/p}\le
C\left(\Ex\sup_{t\in T}\left|\sum_i t_i\varepsilon_i\right|+\sup_{t\in T}\left(\Ex\left|\sum_i t_i\varepsilon_i\right|^p\right)^{1/p}\right)
\]
for some constant $C$. Therefore
\[
(\Ex\|\varepsilon\cdot A_l\|^p)^{1/p}\le
C\left(\Ex\|\varepsilon\cdot A_l\|+C\|A_l\|_{\varepsilon,\log(n+1)}\right)
\]
and the second part of the assertion easily follows.
\end{proof}

Estimation of the operator norm $\Ex\|\varepsilon\cdot A_l\|$ is basically as difficult as Theorem \ref{thm:twosidedbound} itself. A comparison between Bernoulli and Gaussian processes gives an upper bound of the form
\begin{equation}
\label{eq:comparison}
\Ex\sup_{t\in T}\left|\sum_i t_i\varepsilon_i\right|\le C\Ex\sup_{t\in T}\left|\sum_i t_i g_i\right|,
\end{equation}
where $g_i$ are independent standard Gaussian random variables. This bound is far from optimal in general. However, under some specific assumptions on the matrix $A_l$, it becomes a sufficient tool for our considerations. Recall that two-sided bound for $\Ex\|(a_{ij}g_{ij})\|$ was obtained in \cite{HLY}. In particular, it holds that if $(a_{ij})$ is an $n\times n$ matrix and $|a_{ij}|\le 1$, then
\begin{equation}
\label{eq:gaussian}
\Ex\|(a_{ij}g_{ij})\|\le C\left(\max_i \sqrt{\sum_j a_{ij}^2}+\max_j\sqrt{\sum_i a_{ij}^2}+\sqrt{\log(n)}\right).
\end{equation}

\begin{corollary}
\label{cor:boundbydegree}
Let $A=(a_{ij})_{i,j\le n}$ be a matrix of size $n\times n$ and $d\le n/2$ a natural number. Assume that\\
$(i)$ There are natural numbers $1\le p_1\le\ldots\le p_d\le n/2$ such that if $a_{ij}\neq 0$, then $i-j=\pm p_k \mod n$ for some $1\le k\le d$,\\
$(ii)$ $|a_{ij}|\le 1$ for any $i,j$.\\
Then
\[
\Ex\|\varepsilon\cdot A\|\le C(\sqrt{d}+\|A\|_{\varepsilon,\log(n+1)}).
\]
\end{corollary}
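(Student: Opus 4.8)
The corollary should follow by combining Lemma~\ref{lem:blocknorm}(ii) with the Gaussian estimate \eqref{eq:gaussian} via the comparison \eqref{eq:comparison}, once we have used the circulant-type structure to reduce $A$ to a block-diagonal matrix with small blocks. Hypothesis $(i)$ says that the support of $A$ is contained in the edge set $E$ of the circulant graph $G$ built from the shifts $p_1,\dots,p_d$, so the whole apparatus from Lemmas~\ref{lem:exclusion}--\ref{lem:meandiag} applies. The first step is therefore to invoke Lemma~\ref{lem:meandiag}: there are block-diagonal matrices $B_1,\dots,B_N$, each with blocks of size at most $2^d$, each the incidence matrix of a subgraph $(V,E_k)$ with $E_k\subset E$, and $\frac{1}{32}\ind_E\le\frac1N\sum_{k=1}^N B_k$. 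Writing $A_k:=A\cdot B_k$ (Hadamard product), each $A_k$ is block-diagonal with blocks of size $\le 2^d$, its support lies in $E$, and since $|a_{ij}|\le 1$ each block $A_{k,l}$ is a $\le 2^d\times 2^d$ matrix with entries bounded by $1$ whose support still satisfies condition $(i)$ (hence each vertex in a block has degree at most $2d$).

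Next I would bound $\Ex\|\varepsilon\cdot A_k\|$ for a single $k$ using Lemma~\ref{lem:blocknorm}(ii): it is at most $C(\max_l \Ex\|\varepsilon\cdot A_{k,l}\| + \|A_k\|_{\varepsilon,\log(n+1)})$. For the Gaussian term, apply \eqref{eq:comparison} to each block and then \eqref{eq:gaussian}: a block $A_{k,l}$ lives on at most $2^d$ coordinates, has entries $\le 1$, and (by the degree bound in $G$) each of its rows and columns has at most $2d$ nonzero entries, so $\max_i\sqrt{\sum_j a_{ij}^2}+\max_j\sqrt{\sum_i a_{ij}^2}\le 2\sqrt{2d}$ and $\sqrt{\log(\text{size})}\le\sqrt{d\log 2}$; hence $\Ex\|\varepsilon\cdot A_{k,l}\|\le C\sqrt d$. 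For the $\|A_k\|_{\varepsilon,\log(n+1)}$ term, note that entrywise $|A_k|=|A|\cdot B_k\le|A|$ (the $B_k$ are $0/1$ matrices), and $\|\cdot\|_{\varepsilon,p}$ is monotone under entrywise domination of absolute values of coefficients (immediate from the definition \eqref{eq:defAvep}, since replacing $a_{ij}$ by a larger $|a_{ij}'|$ and absorbing signs into $\varepsilon_{ij}$ only increases the sup); therefore $\|A_k\|_{\varepsilon,\log(n+1)}\le\|A\|_{\varepsilon,\log(n+1)}$. Thus $\Ex\|\varepsilon\cdot A_k\|\le C(\sqrt d+\|A\|_{\varepsilon,\log(n+1)})$ uniformly in $k$.

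Finally I would pass from the $B_k$'s back to $A$ by convexity of the operator norm. From $\frac1{32}\ind_E\le\frac1N\sum_k B_k$ and $|A|\le\ind_E$ (entrywise, since $|a_{ij}|\le1$ on $E$ and $a_{ij}=0$ off $E$) we get that for each fixed realization of $\varepsilon$ the matrix $\varepsilon\cdot A$ has $|(\varepsilon\cdot A)_{ij}|=|a_{ij}|\le\ind_E(i,j)\le\frac{32}{N}\sum_k B_k(i,j)$; writing $\varepsilon\cdot A$ as the Hadamard product $\varepsilon\cdot(\text{sign}(A)\cdot|A|)$ and using that $\varepsilon_{ij}\,\text{sign}(a_{ij})$ has the same distribution as $\varepsilon_{ij}$, one reduces to showing $\Ex\|\varepsilon\cdot|A|\|\le\frac{32}{N}\sum_k\Ex\|\varepsilon\cdot B_k\|$—but this is not literally a convexity statement since the entrywise bound does not survive under a fixed sign pattern. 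The clean way around this, and the step I expect to be the main technical point, is to instead run the averaging at the level of the matrices $A_k=A\cdot B_k$ themselves: one has the pointwise (in $\varepsilon$) identity that $\varepsilon\cdot A$ is supported on $E$, and $\frac1N\sum_k (\varepsilon\cdot A)\cdot B_k$ recovers a fixed fraction of $\varepsilon\cdot A$ "on average over $k$" only after a symmetrization/shift argument as in the proof of Lemma~\ref{lem:meandiag}; concretely, since the $B_k$ there are cyclic shifts of one fixed pattern and $A$ is itself (essentially) circulant up to the given support constraint, $\varepsilon\cdot A$ restricted to the support of $B_k$ has, after the cyclic shift, the same distribution as $\varepsilon\cdot A_1$, so $\Ex\|\varepsilon\cdot A\|$ can be controlled by $C\max_k\Ex\|\varepsilon\cdot A_k\|$ using the same $L_p$-of-a-max computation as in Lemma~\ref{lem:blocknorm}(ii) with $p=\log(n+1)$ together with the lower bound $\frac1{32}\ind_E\le\frac1N\sum_k B_k$ to ensure the shifted copies genuinely cover a constant fraction of $E$. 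Assembling these pieces gives $\Ex\|\varepsilon\cdot A\|\le C(\sqrt d+\|A\|_{\varepsilon,\log(n+1)})$, as claimed.
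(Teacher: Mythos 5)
You have correctly assembled all the right ingredients (Lemma~\ref{lem:meandiag}, block-diagonal structure, the Gaussian comparison \eqref{eq:comparison}--\eqref{eq:gaussian}, Lemma~\ref{lem:blocknorm}(ii), and the monotonicity of $\|\cdot\|_{\varepsilon,p}$), but the last step --- passing from the bounds on $\Ex\|\varepsilon\cdot A\cdot B_k\|$ back to $\Ex\|\varepsilon\cdot A\|$ --- is where your argument breaks down. You correctly observe that the entrywise inequality $\frac{1}{32}\ind_E\le\frac1N\sum_k B_k$ does not give a pathwise comparison of operator norms for a fixed realization of $\varepsilon$, and that ``this is not literally a convexity statement.'' What you are missing is that it \emph{is} a comparison in expectation: this is exactly the contraction principle for Rademacher sums (\cite[Theorem~4.4]{LT}), which says that $\Ex\|(\varepsilon_{ij}c_{ij})\|\le\Ex\|(\varepsilon_{ij}d_{ij})\|$ whenever $|c_{ij}|\le|d_{ij}|$ for all $i,j$. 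Taking $c_{ij}=a_{ij}$ and $d_{ij}=a_{ij}\cdot\frac{32}{N}\sum_k B_k(i,j)$ --- and noting that $|c_{ij}|\le|d_{ij}|$ because $\frac{32}{N}\sum_k B_k\ge\ind_E\ge 1$ on the support of $A$ --- gives $\Ex\|\varepsilon\cdot A\|\le\Ex\|\varepsilon\cdot A\cdot\frac{32}{N}\sum_k B_k\|$, and then the triangle inequality yields $\le\frac{32}{N}\sum_k\Ex\|\varepsilon\cdot A\cdot B_k\|\le 32\max_k\Ex\|\varepsilon\cdot A\cdot B_k\|$. This should come \emph{first}, before estimating the individual $\Ex\|\varepsilon\cdot A\cdot B_k\|$.

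Your proposed workaround --- a ``symmetrization/shift argument'' exploiting that the $B_k$ are cyclic shifts of one another, so that $\varepsilon\cdot A$ restricted to the support of $B_k$ should have the same distribution after shifting --- does not work here, because the corollary does not assume $A$ itself is circulant: the hypothesis $(i)$ only constrains the support of $A$ to lie in a circulant band pattern. Thus $\varepsilon\cdot A\cdot B_k$ and $\varepsilon\cdot A\cdot B_1$ need not be equal in law after a shift. Moreover, even if they were, the ``$L_p$-of-a-max'' computation from Lemma~\ref{lem:blocknorm}(ii) is built for a single block-diagonal matrix; the supports of the different $A\cdot B_k$ overlap as $k$ varies, so that argument does not apply. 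The contraction principle is the clean and correct tool, and with it in place the rest of your argument goes through exactly as written.
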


\begin{proof}
Assume that $B_k=(b_{ij}(k))$, $k=1,\ldots,N$ are the block diagonal matrices given by Lemma \ref{lem:meandiag}. Then we have
\begin{align*}
\Ex\|\varepsilon\cdot A\|
&=\Ex\|\varepsilon\cdot A\cdot\ind_E\|\le
\Ex\bigg\|\varepsilon\cdot A\cdot\frac{32}{N}\sum_{k=1}^NB_k\bigg\|
\le\frac{32}{N}\sum_{k=1}^N\Ex\|\varepsilon\cdot A\cdot B_k\|
\\
&\le 32\max_{k\le N}\Ex\|\varepsilon\cdot A\cdot B_k\|,
\end{align*}
where the first inequality follows by Lemma \ref{lem:meandiag}, since, by the contraction principle \cite[Theorem 4.4]{LT}, 
$\Ex\|(\ve_{ij}c_{ij})\|\leq \Ex\|(\ve_{ij}d_{ij})\|$ if $|c_{ij}|\leq |d_{ij}|$ for all $i,j$.

Each of the matrices $\varepsilon\cdot A\cdot B_k$ is block diagonal, with blocks of size at most $2^d$. Let $I_1(k),\ldots,I_m(k)\subset[n]$ be such that the diagonal blocks are of the form $(a_{ij}b_{ij}(k)\varepsilon_{ij})_{i,j\in I_l(k)}$. The blocks have coefficients of absolute value at most $1$ and in any row at most $2d$ of them are nonzero. Hence,  using \eqref{eq:comparison} and \eqref{eq:gaussian} applied to the matrix $(a_{ij}b_{ij}(k)\varepsilon_{ij})_{i,j\in I_l(k)}$
with $|I_l(k)|\leq 2^d$, we obtain
\[
\Ex\|(a_{ij}b_{ij}(k)\varepsilon_{ij})_{i,j\in I_l(k)}\|\le C\sqrt{d},\quad k=1,\ldots,N,\quad l=1,\ldots,m.
\]
Therefore, by Lemma \ref{lem:blocknorm},
\[
\Ex\|\varepsilon\cdot A\cdot B_k\|\le C(\sqrt{d}+\|A\cdot B_k\|_{\varepsilon,\log(n+1)}).
\]
Since $B_k$ has 0-1 entries, it holds that $\|A\cdot B_k\|_{\varepsilon,\log(n+1)}\le\|A\|_{\varepsilon,\log(n+1)}$ and we finish the proof.
\end{proof}

\noindent
\textbf{Remark.} If we fix $\delta\in(0,1)$, then the upper bound in Theorem \ref{thm:twosidedbound}, without $\log\log$ terms and with  a constant depending on $\delta$, follows from Corollary \ref{cor:boundbydegree} under the assumption that either $a_{ij}=0$ or $\delta\le|a_{ij}|\le 1$ for any $i,j$.

\medskip

Corollary \ref{cor:boundbydegree} not only proves Theorem \ref{thm:twosidedbound} in this special 0-1 (or a bit wider) case. In general situation, the proof relies on dividing the matrix $(a_{ij})$ into parts, where all nonzero coordinates differ at most by a constant factor. Then again Corollary \ref{cor:boundbydegree} provides the crucial estimate.

\begin{proof}[Proof of Theorem \ref{thm:twosidedbound}]
The lower bound can be deduced from Theorem \ref{thm:lower}. To this aim it is sufficient to show that
\[
\sup_{\|s\|_2,\|t\|_2\le 1}\left\|\sum_{i,j} a_{ij}\varepsilon_{ij}s_i t_j\right\|_{\log(n+1)}
\le C\min_{I\subset[n],|I|\le n/4}\sup_{\|s\|_2,\|t\|_2\le 1}
\left\|\sum_{i,j\not\in I}a_{ij}\varepsilon_{ij}s_i t_j\right\|_{\log(n/4+1)},
\]
since the term on the right hand side is trivially bounded by the second term on the right hand side of \eqref{eq:lower}. We denote by $\bar{s},\bar{t}$ unit vectors realizing the supremum in the Rademacher norm:
\[
\sup_{\|s\|_2,\|t\|_2\le 1}\left\|\sum_{i,j}a_{ij}\varepsilon_{ij}s_i t_j\right\|_{\log(n+1)}
=\left\|\sum_{i,j}a_{ij}\varepsilon_{ij}\bar{s}_i \bar{t}_j\right\|_{\log(n+1)}.
\]
Such pair $(\bar{s},\bar{t})$ is not unique. Observe that circulant matrices have the following shift invariance property
(equality is meant in law)
\begin{equation}
\label{eq:shiftinv}
\sum_{i,j}a_{ij}\varepsilon_{ij}s_i t_j\stackrel{d}=\sum_{i,j}a_{ij}\varepsilon_{ij}s_{i+k}t_{j+k},\quad k=1,\ldots,n;\quad s,t\in\mathbb{R}^n.
\end{equation}
Here and in the whole proof addition inside indices is $\mod n$.

Fix arbitrary subset $I\subset[n]$ with $|I|\le n/4$. For any pair $(i,j)\in[n]\times[n]$ we have
\[
|\{k:i+k\in I\vee j+k\in I\}|\le\sum_{l\in I}(|\{k:i+k=l\}|+|\{k:j+k=l\}|)=2|I|\le\frac{n}{2}.
\]
Hence $i,j\not\in I-k$ for at least $n/2$ distinct values of $k$. It follows that
\[
\big|a_{ij}s_i t_j\big|\le
\bigg|\frac{2}{n}\sum_{k=1}^n a_{ij}s_i t_j \ind_{\{i,j\not\in I-k\}}\bigg|
\]
and the following estimate holds:
\begin{align*}
\left\|\sum_{i,j}a_{ij}\varepsilon_{ij}\bar{s}_i\bar{t}_j\right\|_{\log(n+1)}
&\le\frac{2}{n}\left\|\sum_{k=1}^n\sum_{i,j}a_{ij}\varepsilon_{ij}\bar{s}_i\bar{t}_j 
\ind_{\{i,j\not\in I-k\}}\right\|_{\log(n+1)}
\\
&\le\frac{2}{n}\sum_{k=1}^n\left\|\sum_{i,j\not\in I-k}a_{ij}\varepsilon_{ij}\bar{s}_i\bar{t}_j\right\|_{\log(n+1)}
\\
&\le 2\max_{1\le k\le n}\left\|\sum_{i,j\not\in I-k}a_{ij}\varepsilon_{ij}\bar{s}_{i}\bar{t}_{j}\right\|_{\log(n+1)}
\\
&\le C\sup_{\|s\|_2,\|t\|_2\le 1}\left\|\sum_{i,j\not\in I}a_{ij}\varepsilon_{ij}s_i t_j\right\|_{\log(n/4+1)},
\end{align*}
where the last inequality follows  from \eqref{eq:shiftinv} and the Khintchine inequality. This proves the lower bound.

Now we will show the upper bound.
Since the problem is homogenous, we may assume that $\max_{i,j}|a_{ij}|=1$. Let $A^{(k)}=(a_{ij}^{(k)})$ for 
$0\leq k\leq k_0:=\lfloor\log\log(n+3)\rfloor$, where
\begin{align*}
a_{ij}^{(k)}&=b_{i-j}^{(k)}:=b_{i-j}\ind_{e^{-k}<|b_{i-j}|\le e^{-k+1}},\quad k=1,\ldots,k_0,\\
a_{ij}^{(0)}&=b_{i-j}^{(0)}:=b_{i-j}\ind_{|b_{i-j}|\le e^{-k_0}}.
\end{align*}
Clearly $A^{(k)}$ are $n\times n$ circulant matrices, $(a_{ij})=\sum_k A^{(k)}$ and for any $i,j$ there is at most one $k$ such that $a_{ij}^{(k)}\neq 0$.
Applying \eqref{eq:comparison} and \eqref{eq:gaussian} to the matrix $e^{k_0} A^{(0)}$ we obtain
\begin{align*}
\Ex\|\varepsilon\cdot A^{(0)}\|&\le C\left(\max_i\sqrt{\sum_j (a_{ij}^{(0)})^2}+\max_j\sqrt{\sum_i (a_{ij}^{(0)})^2}+e^{-k_0}\sqrt{\log(n)}\right)\\
&\le C\left(\max_i\sqrt{\sum_j a_{ij}^2}+\max_j\sqrt{\sum_i a_{ij}^2}\right)=2C\sqrt{\sum_j b_j^2}.
\end{align*}
For $k\ge 1$ we assume that $d_k$ is the degree of $A^{(k)}$. Then Corollary \ref{cor:boundbydegree} applied to the matrix $e^{k-1}A^{(k)}$ gives
\begin{align}
\nonumber
\Ex\|\varepsilon\cdot A^{(k)}\|&\le C(e^{-k+1}\sqrt{d_k}+\|A^{(k)}\|_{\varepsilon,\log(n+1)})\\
\label{eq:partestimate}
&\le C\left(e\sqrt{\sum_j (b_j^{(k)})^2}+\|A^{(k)}\|_{\varepsilon,\log(n+1)}\right).
\end{align}

It is not hard to see that $\|A^{(k)}\|_{\varepsilon,\log(n+1)}\le\|(a_{ij})\|_{\varepsilon,\log(n+1)}$. Moreover, by the Cauchy-Schwartz inequality, for any $i$ we have
\[
\sum_{k=1}^{\log\log(n+3)}\sqrt{\sum_j (b_j^{(k)})^2}\le\sqrt{\log\log(n+3)}\sqrt{\sum_j b_j^2}.
\]
Now the triangle inequality yields
\begin{align*}
\Ex\|(a_{ij}\varepsilon_{ij})\|&\le\sum_{k=0}^{\log\log(n+3)}\Ex\|\varepsilon\cdot A^{(k)}\|\\
&\le C\sqrt{\sum_j b_j^2}+C\sum_{k=1}^{\log\log(n+3)}\left(\sqrt{\sum_j (b_j^{(k)})^2}+\|A^{(k)}\|_{\varepsilon,\log(n+1)}\right)\\
&\le C\left(\sqrt{\log\log(n+3)}\sqrt{\sum_j b_j^2}+\log\log(n+3)\|(a_{ij})\|_{\varepsilon,\log(n+1)}\right).
\end{align*}
\end{proof}

\noindent
\textbf{Remark.} The crucial observation in the proof of Theorem \ref{thm:twosidedbound} was that for a (directed) circulant graph
$(V,E)$ of degree $d$ there exists matrices $B_1,\ldots,B_N$ such that $\ind_E\leq \frac{1}{N}\sum_{k=1}^N B_k$ and $B_k$ are 
adjacency matrices of graphs with components of cardinality at most exponential in $d$. We do not know how broad is the class of graphs of degree $d$ with such property.

\section{Estimates for Rademacher norms}
\label{sec:radnorms}

In this section we will show estimates for the quantity
\[
\|(a_{ij})\|_{\ve,p}=\sup_{\|s\|_2,\|t\|_2\leq 1}\Bigg\|\sum_{i,j}a_{ij}\ve_{ij}s_it_j\Bigg\|_p
\] 
and apply them in few concrete situations.

We begin with the proof of
Proposition  \ref{prop:szacrad}, which gives two-sided bound for $\|A\|_{\ve,p}$
in the case of $0$-$1$ matrices.

\begin{proof}[Proof of Proposition \ref{prop:szacrad}]
To get the lower estimate let us fix $I\subset E$ of cardinality at most $p$. Then for $s,t\in \er^n$ we have
\begin{align*}
\Bigg\|\sum_{(i,j)\in I}\ve_{ij}s_it_j\Bigg\|_p
&\geq \sum_{(i,j)\in I}|s_i||t_j|\Prob\big(\forall_{(i,j)\in I}\ \ve_{ij}=\mathrm{sgn}(t_is_j)\big)^{1/p}
\geq \frac{1}{2}\sum_{(i,j)\in I}|s_i||t_j|.
\end{align*}
Taking the supremum over $s,t\in S^{n-1}$ we get 
$\|\ind_E\|_{\ve,p}\geq \frac{1}{2}\|\ind_{I}\|$.

To establish the reverse estimate define
\[
M
:=\max_{I\subset E, |I|\leq p}\|(\ind_{\{(i,j)\in I\}})\|.
\]
Bounding the operator norm of a matrix by its Hilbert-Schmidt norm we get $\|\ind_I\|\leq \sqrt{|I|}$, so that $M\leq \sqrt{p}$.
We will also assume that $p\geq 2$ is an integer (since we may change $p$ to $\max\{2,\lceil p\rceil\}\leq 2p$ -- observe that the RHS of \eqref{eq:estadj} is sublinear with respect to $p$).

To show the upper bound in \eqref{eq:estadj} it is enough to prove that 
\begin{equation}
\label{eq:estcut}
\sum_{(i,j)\in E}\min\Big\{s_i^2t_j^2,\frac{M^2}{p^2}\Big\}\leq C_1\frac{M^2}{p}\quad \mbox{ for } \|s\|_2,\|t\|_2\leq 1.
\end{equation}

Indeed, suppose that \eqref{eq:estcut} holds. Let us fix $s,t$ with $\|s\|_2,\|t\|_2\leq 1$ and define
$I:=\{(i,j)\in E\colon\ |s_it_j|\geq M/p\}$. Then by \eqref{eq:estcut} we have $|I|\leq C_1p$, so we may decompose $I$ into sets $I_1,I_2,\ldots I_{\lceil C_1\rceil}$ of cardinality at most $p$ each and get
\[
\Bigg\|\sum_{(i,j)\in I}\ve_{ij}s_it_j\Bigg\|_p\leq \sum_{(i,j)\in I}|s_it_j|\leq 
\sum_{l=1}^{\lceil C_1\rceil}\|\ind_{I_l}\|
\leq \lceil C_1\rceil M.
\]
On the other hand the Khintchine inequality yields
\[
\Bigg\|\sum_{(i,j)\in E\setminus I}\ve_{ij}s_it_j\Bigg\|_p\leq 
\sqrt{p}\Bigg(\sum_{(i,j)\in E\setminus I}s_i^2t_j^2\Bigg)^{1/2}
\leq \sqrt{p}\Bigg(\sum_{(i,j)\in E}\min\Big\{s_i^2t_j^2,\frac{M^2}{p^2}\Big\}\Bigg)^{1/2}\leq \sqrt{C_1}M.
\]
Thus by the triangle inequality,
\[
\sup_{\|s\|_2,\|t\|_2\leq 1}\Bigg\|\sum_{(i,j)\in E}\ve_{ij}s_it_j\Bigg\|_p\leq (\lceil C_1\rceil+\sqrt{C_1})M.
\]

Let us now make two simple observations on the cardinality of intersection of the set $E$ with rectangles.
To make the notation more concise set $r:=p^2/M^2\geq p$.

i) Every rectangle $R=J_1\times J_2$ such that $|J_1||J_2|< r$ contains at most
$(|J_1||J_2|)^{1/2}M<p$ edges from $E$.
Indeed, if we take $s:=|J_1|^{-1/2}\ind_{J_1}$ and $t:=|J_2|^{-1/2}\ind_{J_2}$ and take $I\subset E\cap R$ such
that $|I|=\min\{p,|E\cap R|\}$ we get
\[
M\geq \|(\ind_{\{(i,j)\in I\}})\|\geq \sum_{(i,j)\in I}|t_is_j|=(|J_1||J_2|)^{-1/2}|I|.
\]
This shows that $|I|\leq (|J_1||J_2|)^{1/2}M<r^{1/2}M=p$ so $|I|=|E\cap R|\leq (|J_1||J_2|)^{1/2}M$.

ii) Every rectangle $R=J_1\times J_2\subset V\times V$ such that $|J_1||J_2|\geq r$ contains at most
$8p|J_1||J_2|/r=8M^2|J_1||J_2|/p$ edges from $E$. Indeed we may decompose $R$ into sum of $k\leq 8|J_1||J_2|/r$ rectangles of area smaller than $r$ and use i). (To see such decomposition we consider a few cases. Case $|J_1|=1$
is pretty obvious. If $|J_2|< r$ we find $2\leq l\leq |J_1|$ such that $|J_1||J_2|/l<r\leq |J_1||J_2|/(l-1)$ and
partition $J_1$ into at most $2l\leq 4(l-1)\leq 4|J_1||J_2|/r$ sets of cardinatlity at most $|J_1|/l$. Finally
if $|J_2|>r$ and $|J_1|\geq 2$, we may decompose $J_2$ into at most $2|J_2|/r$ sets of cardinality smaller than
$r$ and use the method of further decomposition described in the previous step).

Since it is only a matter of permutation of rows and columns of the matrix $\ind_E$ (recall that we do not assume any symmetry) to establish
\eqref{eq:estcut}
we may assume without loss of generality that $|s_1|\geq |s_2|\geq\ldots\geq |s_n|$ and $|t_1|\geq |t_2|\geq\ldots\geq |t_n|$.
We also put $t_k=s_k=0$ for $k>n$.
Let $D_k:=\{2^{k},2^{k}+1,\ldots,2^{k+1}-1\}$.
Then by the monotonicity assumption and above observations,
\begin{align}
\notag
\sum_{(i,j)\in E}&\min\Big\{s_i^2t_j^2,\frac{M^2}{p^2}\Big\}
\leq \sum_{k,l\geq 0}\min\Big\{s_{2^k}^2t_{2^l}^2,\frac{1}{r}\Big\}|E\cap (D_k\times D_l)|
\\
\label{eq:estcut2}
&\leq \sum_{k+l< \log_2 r}M2^{\frac{k+l}{2}}\min\Big\{s_{2^k}^2t_{2^l}^2,\frac{1}{r}\Big\}
+\sum_{k+l\geq \log_2 r}8\frac{M^2}{p}2^{k+l}\min\Big\{s_{2^k}^2t_{2^l}^2,\frac{1}{r}\Big\}.
\end{align}
Observe that by monotonicity of $(s_k)$ we have
\[
\sum_{k\ge 0}2^ks_{2^k}^2\leq 2\left(s_1^2+\sum_{k\ge 0}2^ks_{2^{k+1}}^2\right)\leq 2\sum_{k\ge 0}s_k^2\leq 2.
\]
In the same way it follows that $\sum_{k\ge 0}2^k t_{2^k}^2\leq 2$. Therefore
\begin{equation}
\label{eq:estcut3}
\sum_{k+l\geq \log_2 r}2^{k+l}\min\Big\{s_{2^k}^2t_{2^l}^2,\frac{1}{r}\Big\}
\leq \sum_{k,l\geq 0}2^{k+l}s_{2^k}^2t_{2^l}^2
=\Bigg(\sum_{k\geq 0}2^ks_{2^k}^2\Bigg)\Bigg(\sum_{l\geq 0}2^lt_{2^l}^2\Bigg)\leq  4.
\end{equation}

Now we will estimate the second term in \eqref{eq:estcut2}. To this aim fix  $x>0$ and
define
\begin{align*}
l(k)&=l_x(k):=\max\{l\geq 0\colon\ s_{2^k}^2t_{2^l}^2\geq x\}
\quad \mbox{for}\quad k\in A(x):=\{k\geq 0\colon\ s_{2^k}^2{t_1}^2\geq x\},
\\
k(l)&=k_x(l):=\max\{k\geq 0\colon l(k)=l\}\quad \mbox{for}\quad l\in B(x):=\{l(k)\colon\ k\in A(x)\}.
\end{align*}
Note that the function $k=k(l)$ is strictly decreasing on $B(x)$. We have
\begin{align*}
\sum_{k,l\geq 0}2^{\frac{k+l}{2}}\ind_{\{s_{2^k}^2t_{2^l}^2\geq x\}}
&\leq \sum_{k\in A(x)} 2^{\frac{k}{2}}\sum_{l=0}^{l(k)}2^{\frac{l}{2}}
\leq \sum_{k\in A(x)} 2^{\frac{k}{2}}\frac{\sqrt{2}}{\sqrt{2}-1}2^{\frac{l(k)}{2}}
\\
&=\frac{\sqrt{2}}{\sqrt{2}-1}\sum_{l\in B(x)}2^{\frac{l}{2}}\sum_{k\colon l(k)= l}2^{\frac{k}{2}}
\leq \Big(\frac{\sqrt{2}}{\sqrt{2}-1}\Big)^2\sum_{l\in B(x)}2^{\frac{k(l)+l}{2}}
\\
&\leq \Big(\frac{\sqrt{2}}{\sqrt{2}-1}\Big)^2\sum_{l\in B(x)}2^{\frac{k(l)+l}{2}}\frac{|s_{2^{k(l)}}t_{2^l}|}{\sqrt{x}}
\\
&\leq \frac{1}{\sqrt{x}}\Big(\frac{\sqrt{2}}{\sqrt{2}-1}\Big)^2
\Big(\sum_{l\in B(x)}2^{k(l)}s_{2^{k(l)}}^2\Big)^{1/2}\Big(\sum_{l\in B(x)}2^{l}s_{2^l}^2\Big)^{1/2}
\\
&\leq \frac{1}{\sqrt{x}}\Big(\frac{\sqrt{2}}{\sqrt{2}-1}\Big)^2
\Big(\sum_{k\geq 0}2^{k}s_{2^{k}}^2\Big)^{1/2}\Big(\sum_{l\geq 0}2^{l}s_{2^l}^2\Big)^{1/2}
\\
&\leq \frac{2}{\sqrt{x}}\Big(\frac{\sqrt{2}}{\sqrt{2}-1}\Big)^2.
\end{align*}
Finally observe that 
\[
\min\Big\{a,\frac{1}{r}\Big\}\leq \sum_{u=0}^\infty \frac{1}{2^u r}\ind_{\{a\geq (2^ur)^{-1}\}}\quad\mbox{for}\quad a\geq 0,
\]
so
\begin{align}
\notag
\sum_{k,l\geq 0}2^{\frac{k+l}{2}}\min\Big\{s_{2^k}^2t_{2^l}^2,\frac{1}{r}\Big\}
&\leq \sum_{u=0}^\infty \frac{1}{2^u r}\sum_{k,l\geq 0}2^{\frac{k+l}{2}}\ind_{\{s_{2^k}^2t_{2^l}^2\geq (2^ur)^{-1}\}}
\\
\notag
&\leq \sum_{u=0}^\infty \frac{1}{2^u r}2(2^ur)^{1/2}\Big(\frac{\sqrt{2}}{\sqrt{2}-1}\Big)^2
\\
\label{eq:estcut4}
&=2\Big(\frac{\sqrt{2}}{\sqrt{2}-1}\Big)^3r^{-1/2}=2\Big(\frac{\sqrt{2}}{\sqrt{2}-1}\Big)^3\frac{M}{p}.
\end{align}

Estimates \eqref{eq:estcut2}-\eqref{eq:estcut4} imply \eqref{eq:estcut}, which completes the proof.

\end{proof}

The proof of the upper bound in Proposition \ref{prop:szacrad} strongly relies on the assumption that $a_{ij}\in\{0,1\}$. In the general case we can prove the same lower bound, but the upper estimate that we provide is significantly weaker.

\begin{proof}[Proof of Proposition \ref{prop:szacradA}]
The lower estimate follows by the same argument as in Proposition \ref{prop:szacrad}.

To get the reverse bound set 
\[
M:=\max_{I\subset [n]\times[n], |I|\leq p}\|(|a_{ij}|\ind_{\{(i,j)\in I\}})\|.
\]
By the Khintchine inequality we have
\begin{align*}
\Big\|(a_{ij}\ind_{\{|a_{ij}|\leq M/\sqrt{p}\}})\Big\|_{\ve,p}
&\leq \sqrt{p}\sup_{\|s\|_2\leq 1,\|t\|_2\leq 1}\sqrt{\sum_{ij}a_{ij}^2\ind_{\{|a_{ij}|\leq M/\sqrt{p}\}}s_i^2t_j^2}
\\
&=\sqrt{p}\max_{ij}|a_{ij}|\ind_{\{|a_{ij}|\leq M/\sqrt{p}\}}\leq M,
\end{align*}
so it is enough to estimate $\|\tilde{A}\|_{\ve,p}$, where $\tilde{a}_{ij}=a_{ij}\ind_{\{|a_{ij}|> M/\sqrt{p}\}}$.
Observe that 
\[
M\geq \max_{J\subset [n],|J|\leq p}\max\Big\{\max_i\sqrt{\sum_{j\in J}a_{ij}^2},\max_{j}\sqrt{\sum_{i\in J}a_{ij}^2}\Big\},
\]
so in each row and column there are at most $p$ nonzero elements of matrix $\tilde{A}$ and the length of each row and column
of $\tilde{A}$ is at most $M$. Thus by Lemma \ref{lem:smallsupportrad} we have that
\[
\|\tilde{A}\|_{\ve,p}\leq 2M+\sup_{\substack{\|s\|_2\leq 1,|\supp(s)|\leq p,\\ \|t\|_2\leq 1, |\supp(t)|\leq p}}
\left\|\sum_{ij}{\tilde a_{ij}}\ve_{ij}s_it_j\right\|_p
\leq 2M+\sup_{\substack{\|s\|_2\leq 1,|\supp(s)|\leq p,\\ \|t\|_2\leq 1, |\supp(t)|\leq p}}
\left\|\sum_{ij}a_{ij}\ve_{ij}s_it_j\right\|_p.
\]
To estimate the latter quantity take vectors $s,t\in S^{n-1}$ with support at most $p$ and define
\[
I_k:=\{(i,j)\in [n]\times [n]\colon |a_{ij}s_it_j|\geq 2^{-k}\},\quad k_0:=\sup\{k\colon\ |I_k|\leq p\}.
\]
Then $|I_{k_0}|\leq p$ and
\[
\Bigg\|\sum_{(i,j)\in I_{k_0}}a_{ij}\ve_{ij}s_it_j\Bigg\|_p\leq \sum_{(i,j)\in I_{k_0}}|a_{ij}||s_i||t_j| 
\leq\|(|a_{ij}|\ind_{\{(i,j)\in I_{k_0}\}})\|\leq M.
\]
Taking $\tilde{I}\subset I_{k_0+1}$ of cardinality $\lfloor p\rfloor \geq  p/2$ we see that
\begin{equation}
\label{eq:M:p}
M\geq \sum_{i,j\in \tilde{I}}|a_{ij}s_it_j|\geq 2^{-1-k_0}\lfloor p\rfloor\geq 2^{-2-k_0}p.
\end{equation}
By the Khintchine inequality we have
\[
\Bigg\|\sum_{(i,j)\notin I_{k_0}}a_{ij}\ve_{ij}s_it_j\Bigg\|_p\leq \sqrt{p}\Bigg(\sum_{(i,j)\notin I_{k_0}}a_{ij}^2s_i^2t_j^2\Bigg)^{1/2}.
\]
Let $(i,j)\in [n]\times [n]$ be such that $2^{1-l}>|a_{ij}s_it_j|\geq 2^{-l}$ for some $l>k_0$. Then
\[
a_{ij}^2{s_i^2t_j^2}\leq 2^{2-2l}=3\sum_{k\geq l}2^{-2k}=3\sum_{k=k_0+1}^\infty 2^{-2k}
\ind_{\{(i,j)\in I_k\}},
\]
therefore
\[
\Bigg\|\sum_{(i,j)\notin I_{k_0}}a_{ij}\ve_{ij}s_it_j\Bigg\|_p
\leq \sqrt{p}\Bigg(\sum_{i,j}3\sum_{k=k_0+1}^\infty 2^{-2k}\ind_{(i,j)\in I_k}\Bigg)^{1/2}
=\sqrt{3p}\Bigg(\sum_{k=k_0+1}^\infty 2^{-2k}|I_k|\Bigg)^{1/2}.
\]
Obviously $|I_k|\leq |\supp(s)||\supp(t)|\leq p^2$, so  using \eqref{eq:M:p} we obtain
\[
p\sum_{k\geq k_0+\frac{1}{2}\log_2 p}2^{-2k}|I_k|\leq p^3\sum_{k\geq k_0+\frac{1}{2}\log_2 p}2^{-2k}
\leq p^22^{1-2k_0}\leq 2^5M^2.
\]

Let us take $k> k_0$, then we may partition $[n]$ into disjoint sets $J_1,\ldots,J_{n_k}$ such that
\[
p\leq |I_k\cap(J_l\times [n])|\leq p+|\supp(t)|\leq 2p,\ l=1,\ldots,n_k-1,\quad |I_k\cap(J_{n_k}\times [n])|\leq 2p.
\]
Observe that $p(n_k-1)\leq |I_k|$ so $n_k\leq |I_k|/p+1\leq 2|I_k|/p$. 
We have
\begin{align*}
2^{-k}|I_k|&\leq \sum_{(i,j)\in I_k}|a_{ij}s_it_j|
\leq\sum_{l=1}^{n_k}\sum_{(i,j)\in I_k\cap (J_l\times [n])}|a_{ij}s_it_j|
=\sum_{l=1}^{n_k}\sum_{i\in J_l}|s_i|\sum_{j\in[n]}\ind_{\{(i,j)\in I_k\}}|a_{ij}t_j|\\
& \leq\sum_{l=1}^{n_k}\|(s_i)_{i\in J_l}\|_2\|(|a_{ij}|\ind_{(i,j)\in I_k\cap(J_l\times[n])})\|
\leq 
2M\sqrt{n_k}\|s\|_2\leq 2\sqrt{2}M\sqrt{|I_k|/p}.
\end{align*}
Thus $p2^{-2k}|I_k|\leq 8M^2$ and
\[
p\sum_{k_0<k<k_0+\frac{1}{2}\log_2 p}2^{-2k}|I_k|\leq 4M^2\log_2 p.
\]

\end{proof}

Propositions \ref{prop:szacrad} and \ref{prop:szacradA} provide bounds for  $\|A\|_{\ve,p}$, however
they involve suprema of operator norms that are quite hard to estimate. In the sequel we will discuss more conrete estimates, concentrating on the case when $A$ is the adjacency matrix of $d$-dimensional hypercube or more general $d$-dimensional discrete tori. 

In the case when $A=\ind_E$ is the adjacency matrix of a  graph $G=(V,E)$ we
will denote  $\|\ind_E\|_{\ve,p}$ by $N_{\ve,p}(G)$, i.e.
\[
N_{\ve,p}(G):=\sup_{\|s\|_2,\|t\|_2\leq 1}\Bigg\|\sum_{i,j\in V}\ind_{i\sim j}\ve_{ij}s_it_j\Bigg\|_p,
\] 
where $i\sim j$ means that $(i,j)\in E$.

The next simple lemma presents  general bounds that work well for small values of $p$.

\begin{lemma}  
\label{lem:estNp}
Let $G$ be a graph of maximum vertex degree $d$. Then $N_{\ve,p}(G)\leq \min\{d,\sqrt{p}\}$ for any $p\geq 1$ and 
$\sqrt{p/8}\leq N_{\ve,p}(G)\leq \sqrt{p}$ for  $1\leq p\leq d$.
\end{lemma}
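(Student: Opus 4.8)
The plan is to prove the two-sided bound $N_{\ve,p}(G)\leq \min\{d,\sqrt{p}\}$ and $\sqrt{p/8}\leq N_{\ve,p}(G)\leq\sqrt{p}$ for $1\leq p\leq d$ by combining three elementary estimates: the Khintchine inequality for the $\sqrt{p}$ upper bound, a deterministic operator-norm bound for the $d$ upper bound, and Proposition \ref{prop:szacrad} (or a direct sign-choice argument) for the lower bound.

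\medskip

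First I would handle the upper bound $N_{\ve,p}(G)\leq\sqrt{p}$. By the Khintchine inequality, for any unit vectors $s,t$,
\[
\Bigg\|\sum_{i,j\in V}\ind_{i\sim j}\ve_{ij}s_it_j\Bigg\|_p\leq\sqrt{p}\Bigg(\sum_{i\sim j}s_i^2t_j^2\Bigg)^{1/2}\leq\sqrt{p}\Bigg(\sum_{i\in V}s_i^2\sum_{j\in V}t_j^2\Bigg)^{1/2}=\sqrt{p},
\]
which gives the bound after taking the supremum over $s,t$.

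\medskip

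Next I would prove $N_{\ve,p}(G)\leq d$. Since $\|\cdot\|_p\geq\|\cdot\|_1$ is false in the wrong direction, I instead note $N_{\ve,p}(G)$ for $p\geq 1$ is at most its value for large $p$ is not monotone the right way either; the cleanest route is: for \emph{any} fixed signs $\ve_{ij}$, the matrix $(\ind_{i\sim j}\ve_{ij})$ has each row and each column with at most $d$ nonzero entries of modulus $1$, so its operator norm is at most $\sqrt{d\cdot d}=d$ by the standard bound $\|M\|\leq\sqrt{\|M\|_{1\to1}\|M\|_{\infty\to\infty}}$ (or Schur's test). Hence $\sup_{\|s\|_2,\|t\|_2\leq1}|\sum\ind_{i\sim j}\ve_{ij}s_it_j|\leq d$ pointwise in $\ve$, and taking $L_p$-norms preserves this since the quantity is bounded by the constant $d$ almost surely; thus $N_{\ve,p}(G)\leq d$. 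Combining with the previous paragraph yields $N_{\ve,p}(G)\leq\min\{d,\sqrt{p}\}$.

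\medskip

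Finally, for the lower bound when $1\leq p\leq d$, I would exhibit a fixed vertex $v$ with $\deg(v)\geq d$ — wait, maximum degree $d$ need not be attained at every vertex; but one may assume $G$ has a vertex of degree $d$, otherwise replace $d$ by the maximum degree (the statement as phrased presumably intends $d$ to be the maximum degree, which \emph{is} attained). Pick such $v$ with neighbours $w_1,\dots,w_d$, and apply Proposition \ref{prop:szacrad} with $E$ the full edge set: taking $I=\{(v,w_k):k\leq\lfloor p\rfloor\}$ (assuming $p\leq d$ so $|I|\leq p$), one has $\|\ind_I\|\geq\|\ind_I\|_{HS}/\sqrt{\mathrm{rank}}$ — more directly, $(\ind_{(v,w_k)})_{k\leq\lfloor p\rfloor}$ is (up to padding) a single nonzero row of length $\lfloor p\rfloor$, so its operator norm equals $\sqrt{\lfloor p\rfloor}\geq\sqrt{p/2}$. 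Then Proposition \ref{prop:szacrad} gives $N_{\ve,p}(G)\gtrsim\sqrt{p/2}$; to get the explicit constant $\sqrt{p/8}$ it is cleaner to argue directly: take $s=e_v$, $t=\lfloor p\rfloor^{-1/2}\sum_{k\leq\lfloor p\rfloor}e_{w_k}$, then $\sum\ind_{i\sim j}\ve_{ij}s_it_j=\lfloor p\rfloor^{-1/2}\sum_{k\leq\lfloor p\rfloor}\ve_{v,w_k}$, whose $L_p$-norm is at least its $L_1$-norm, which by Khintchine (lower side) is $\gtrsim\sqrt{\lfloor p\rfloor}\geq\sqrt{p/2}$, and tracking constants ($\lfloor p\rfloor\geq p/2$ for $p\geq1$, and $\|\sum_{k\leq N}\ve_k\|_1\geq\sqrt{N}/\sqrt{2}$ — actually $\|\sum_{k\leq N}\ve_k\|_2=\sqrt N$ and $\|\cdot\|_1\geq\|\cdot\|_2/\sqrt{\text{something}}$, so one uses $\|\sum\ve_k\|_1\geq\frac12\|\sum\ve_k\|_2$ via Paley–Zygmund, cf. \eqref{eq:LowtailRad}) yields the claimed $\sqrt{p/8}$. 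The main obstacle here is purely bookkeeping with the constants; the essential point is that a star with $\lfloor p\rfloor$ edges, which fits inside $G$ whenever $p\leq d$, already forces $N_{\ve,p}(G)$ to be of order $\sqrt{p}$.
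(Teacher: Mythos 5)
Your two upper bounds are correct. The $\sqrt{p}$ bound via Khintchine is exactly the paper's argument. For the $d$ bound you route through the Schur test $\|M\|\leq\sqrt{\|M\|_{1\to1}\|M\|_{\infty\to\infty}}$, whereas the paper argues more directly with $|s_i t_j|\leq\tfrac12(s_i^2+t_j^2)$ to get $\sum_{i\sim j}|s_i t_j|\leq\tfrac{d}{2}(\|s\|_2^2+\|t\|_2^2)$; both are fine.

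The lower bound, however, has a genuine gap. Having reduced to $S:=\lfloor p\rfloor^{-1/2}\sum_{k\le\lfloor p\rfloor}\ve_{v,w_k}$, you bound $\|S\|_p\geq\|S\|_1$ and then invoke Khintchine. But $\|\sum_{k\le N}\ve_k\|_1\sim\sqrt{N}$, so this gives
\[
\|S\|_1\geq\lfloor p\rfloor^{-1/2}\cdot\tfrac12\sqrt{\lfloor p\rfloor}=\tfrac12,
\]
a constant — not $\sqrt{p/8}$. Dropping from $L_p$ to $L_1$ discards precisely the factor you need: when $N\leq p$, one in fact has $\|\sum_{k\le N}\ve_k\|_p\sim N$, not $\sqrt{N}$. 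The paper obtains this via the elementary observation
\[
\Big\|\sum_{j\in J}\ve_{i_0 j}\Big\|_p\geq|J|\cdot\Prob\big(\ve_{i_0 j}=1\ \forall j\in J\big)^{1/p}=|J|\cdot 2^{-|J|/p}\geq\tfrac12|J|,
\]
using $|J|=\lfloor p\rfloor\leq p$; then $|J|^{-1/2}\cdot\tfrac12|J|=\tfrac12\sqrt{\lfloor p\rfloor}\geq\sqrt{p/8}$. This atom-probability argument (not Khintchine's $L_1$ lower bound) is the essential ingredient.

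Your alternative route via Proposition~\ref{prop:szacrad} is actually sound and does give the claimed constant: the lower bound in that proposition is proved with explicit constant $\tfrac12$, i.e. $\|\ind_E\|_{\ve,p}\geq\tfrac12\|\ind_I\|$ for any $I\subset E$, $|I|\leq p$, and for the star $I=\{(v,w_k):k\le\lfloor p\rfloor\}$ one has $\|\ind_I\|=\sqrt{\lfloor p\rfloor}\geq\sqrt{p/2}$, giving $\sqrt{p/8}$. (Unsurprisingly, the proof of that proposition's lower bound uses the same probability-atom argument.) So keep that route, or replace the $L_1$ step with the atom estimate; as written, the ``direct'' computation does not establish the claimed bound.
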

 
\begin{proof}
We have 
\[
\Bigg\|\sum_{i,j\in V}\ind_{i\sim j}\ve_{ij}s_it_j\Bigg\|_p
\leq \sum_{i\sim j}|s_i||t_j|\leq \sum_{i\sim j}\frac{1}{2}(s_i^2+t_j^2)\leq \frac{d}{2}(\|s\|_2^2+\|t\|_2^2).
\]
Moreover, by the Khintchine inequality,
\[
\Bigg\|\sum_{i,j\in V}\ind_{i\sim j}\ve_{ij}s_it_j\Bigg\|_p 
\leq  \sqrt{p}\Bigg(\sum_{i\sim j}s_i^2t_j^2\Bigg)^{1/2}\leq \sqrt{p}\|s\|_2\|t\|_2.
\]
Hence the bound $N_{\ve,p}(G)\leq \min\{d,\sqrt{p}\}$ easily follows.

To obtain the last part of the assertion let us fix $1\leq p\leq d$, choose a vertex $i_0$ of degree $d$ and
let $J$ be a set consisting of $\lfloor p\rfloor$ of neighbours of $i_0$. Set $s:=\ind_{\{i_0\}}$ and
$t:=|J|^{-1/2}\ind_{J}$. Then
\begin{align*}
N_{\ve,p}(G)
&\geq
\Bigg\|\sum_{i,j\in V}\ind_{i\sim j}\ve_{ij}s_it_j\Bigg\|_p
=|J|^{-1/2}\Bigg\|\sum_{j\in J}\ve_{i_0j}\Bigg\|_p
\geq |J|^{1/2}\Prob(\ve_{i_0j}=1 \mbox{ for }j\in J )^{1/p}
\\
&\geq \frac{1}{2}|J|^{1/2}
\geq \frac{1}{2\sqrt{2}}p^{1/2}.
\end{align*}
\end{proof}

The next lemma gives bounds on $N_{\ve,p}(G)$ in terms of expansion and sparsity parameters  of $G$.

\begin{lemma}
\label{lem:estNp2}
Let $p\geq 1$ and $G=(V,E)$ be any graph. Then
\begin{equation}
\label{eq:estbyexp}
N_{\ve,p}\gtrsim\sup_{\emptyset\neq I,J\subset V}\frac{\min\{p,|E\cap(I\times J)|\}}{\sqrt{|I||J|}}.
\end{equation}
Moreover, if $G$ has maximum vertex degree $d$, then
\begin{equation}
\label{eq:estbysparsity}
N_{\ve,p}\lesssim \sqrt{d}\bigg(\sup_{\emptyset\neq I\subset V}\frac{\min\{p,|E\cap(I\times I)|\}}{|I|}\bigg)^{1/2}.
\end{equation}
\end{lemma}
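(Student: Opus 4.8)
The plan is to prove the two bounds separately: \eqref{eq:estbyexp} is elementary, while for \eqref{eq:estbysparsity} I would reduce, via Proposition~\ref{prop:szacrad}, to a spectral estimate for subgraphs of $G$ of bounded size.

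For \eqref{eq:estbyexp}, fix nonempty $I,J\subseteq V$ with $E\cap(I\times J)\neq\emptyset$, set $k:=\min\{\lfloor p\rfloor,|E\cap(I\times J)|\}$ (so $k\ge\tfrac12\min\{p,|E\cap(I\times J)|\}$ since $\lfloor p\rfloor\ge p/2$ for $p\ge1$), pick $F\subseteq E\cap(I\times J)$ with $|F|=k$, and test $N_{\ve,p}$ against the unit vectors $s=|I|^{-1/2}\ind_I$, $t=|J|^{-1/2}\ind_J$. Since the terms of $\sum_{(i,j)\in E}\ve_{ij}s_it_j$ indexed by $E\setminus F$ have conditional mean zero given $(\ve_{ij})_{(i,j)\in F}$, Jensen's inequality gives $\|\sum_{(i,j)\in E}\ve_{ij}s_it_j\|_p\ge\|\sum_{(i,j)\in F}\ve_{ij}s_it_j\|_p=\frac{1}{\sqrt{|I||J|}}\|\sum_{(i,j)\in F}\ve_{ij}\|_p$, and $\|\sum_{(i,j)\in F}\ve_{ij}\|_p\ge k\,\Prob(\ve_{ij}=1\text{ for all }(i,j)\in F)^{1/p}=k\,2^{-k/p}\ge k/2$ because $k\le p$. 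Taking suprema over $s,t$ and then over $I,J$ yields \eqref{eq:estbyexp}. (One may instead invoke the contraction principle \cite[Theorem 4.4]{LT} to pass from $E$ to $F$.)

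For \eqref{eq:estbysparsity} put $\Lambda:=\sup_{\emptyset\neq J\subset V}\frac{\min\{p,|E\cap(J\times J)|\}}{|J|}$ (we may assume $E\neq\emptyset$, so $\Lambda>0$ and $d\ge1$); by Proposition~\ref{prop:szacrad} it suffices to prove $\|\ind_I\|\lesssim\sqrt{d\Lambda}$ for every $I\subseteq E$ with $|I|\le p$. First symmetrize: $\hat I:=I\cup I^{T}\subseteq E$ is symmetric with $|\hat I|\le 2p$, and since $\ind_I\le\ind_{\hat I}$ entrywise with both matrices nonnegative, $\|\ind_I\|\le\|\ind_{\hat I}\|=\lambda_{\max}(\ind_{\hat I})$ by Perron--Frobenius. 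View $\hat I$ as an undirected subgraph $H$ of $G$ and bound its degeneracy (the least $\kappa$ such that every subgraph has a vertex of degree $\le\kappa$): for any $\emptyset\neq J\subseteq V$ the number of edges of $H$ inside $J$ is at most $\min\{2p,|E\cap(J\times J)|\}\le 2\min\{p,|E\cap(J\times J)|\}\le 2\Lambda|J|$, so $H[J]$ has a vertex of degree $\le 2\Lambda$, i.e.\ the degeneracy of $H$ is $\le 2\Lambda$. Take the associated degeneracy ordering of $V$, orient each edge of $H$ from its earlier to its later endpoint (so every vertex has out-degree $\le 2\Lambda$ and in-degree $\le d$), and apply $2\phi_u\phi_v\le c\,\phi_u^{2}+c^{-1}\phi_v^{2}$ with $c=\sqrt{d/(2\Lambda)}$ to the nonnegative unit Perron eigenvector $\phi$ of $\ind_{\hat I}$:
\[
\lambda_{\max}(\ind_{\hat I})=\phi^{T}\ind_{\hat I}\phi=\sum_{u\to v}2\phi_u\phi_v
\le c\cdot 2\Lambda+c^{-1}d=2\sqrt{2\Lambda d},
\]
the sum running over the oriented edges of $H$. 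Hence $\|\ind_I\|\le 2\sqrt{2\Lambda d}$, and Proposition~\ref{prop:szacrad} gives $N_{\ve,p}\lesssim\sqrt{d}\,\sqrt{\Lambda}$, which is \eqref{eq:estbysparsity}.

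I expect the only non-obvious point to be the bookkeeping in the second bound: capping edge counts at $p$ in the definition of $\Lambda$ is exactly what is needed to make densities of subgraphs of $G$ control the degeneracy of the $(\le 2p)$-edge graph $H$, no matter how dense $G$ itself is; together with the classical estimate $\lambda_{\max}\le 2\sqrt{\mathrm{degeneracy}\cdot\Delta}$ obtained from a degeneracy ordering, this is the heart of the argument, and everything else is routine. Routing the estimate through Proposition~\ref{prop:szacrad} is what makes the passage from $N_{\ve,p}$ to a purely combinatorial/spectral question clean; a direct proof by thresholding the coefficients $s_it_j$ would be possible but messier.
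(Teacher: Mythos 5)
Both your bounds are correct, and for the lower bound \eqref{eq:estbyexp} you and the paper essentially do the same thing: the paper invokes the operator-norm lower bound of Proposition~\ref{prop:szacrad} and evaluates the resulting bilinear form at $s=|I|^{-1/2}\ind_I$, $t=|J|^{-1/2}\ind_J$, while you redo the underlying $L_p$ computation (conditioning plus the probability lower bound) directly on the same test vectors; these are the same estimate. For the upper bound \eqref{eq:estbysparsity}, however, your argument is genuinely different. Both proofs reduce via Proposition~\ref{prop:szacrad} to showing $\|\ind_{E_1}\|\lesssim\sqrt{dM}$ for every $E_1\subset E$ with $|E_1|\le p$ (where $M$ is the capped density), but the paper proves this by a dyadic decomposition of the level sets $I_k(s),I_l(t)$ of the test vectors, splitting each box's edge count via the two bounds $|E_1\cap(I_k\times I_l)|\le M|I_k\cup I_l|$ and $|E_1\cap(I_k\times I_l)|\le d\min\{|I_k|,|I_l|\}$ and summing. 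You instead symmetrize $E_1$, pass to the Perron eigenvector of the symmetrized $0$--$1$ matrix, note that the cap at $p$ in the definition of $M$ bounds the degeneracy of the $(\le 2p)$-edge symmetrized graph by $\lesssim M$, and finish with the classical spectral-radius estimate $\lambda_{\max}\le 2\sqrt{(\text{degeneracy})\cdot\Delta}$ coming from a degeneracy ordering. Your route is more conceptual and isolates degeneracy as the combinatorial invariant doing the work; the paper's is more hands-on and self-contained. One small caveat: the chain $|\hat I\cap(J\times J)|\le\min\{2p,|E\cap(J\times J)|\}$ tacitly assumes $\hat I\subset E$, which can fail if $G$ is directed; what you actually need is $|\hat I\cap(J\times J)|\le 2|I\cap(J\times J)|\le 2\min\{p,|E\cap(J\times J)|\}$, and the in-degree bound should then be $2d$ rather than $d$. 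Neither changes the result up to constants.
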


\begin{proof}
To show the lower bound let us fix $\emptyset\neq I,J\subset V$ and choose 
$E_1\subset E\cap(I\times J)$ such
that $p\geq |E_1|\geq \frac{1}{2}\min\{p,|E\cap(I\times J)|\}$. Let $s=|I|^{-1/2}\ind_{I}$ and 
$t=|J|^{-1/2}\ind_{J}$. Then by Proposition \ref{prop:szacrad}
\[
N_{\ve,p}(G)\gtrsim \|(\ind_{\{(i,j)\in E_1\}})\|
\geq \sum_{(i,j)\in E_1}s_it_j=\frac{|E_1|}{\sqrt{|I||J|}}\geq 
\frac{1}{2}\frac{\min\{p,|E\cap(I\times J)|\}}{\sqrt{|I||J|}}
\]
and we get \eqref{eq:estbyexp}.

Now we will prove the upper bound. To this aim define
\[
M:=\sup_{\emptyset\neq I\subset V}\frac{\min\{p,|E\cap(I\times I)|\}}{|I|}.
\]
By Proposition \ref{prop:szacrad} to establish \eqref{eq:estbysparsity} it is enough to show that
\begin{equation}
\label{eq:boundE1}
\sup_{E_1\subset E,|E_1|\leq p}\sup_{\|s\|_2,\|t\|_2\leq 1}\sum_{(i,j)\in E_1}|s_it_j|\lesssim \sqrt{dM}.
\end{equation}

Let us fix $E_1\subset E$ of cardinality at most $p$ and vectors $s,t$ with $\|s\|_2\leq 1$ and
$\|t\|_2\leq 1$.
Set
\[
I_k(u):=\{i\in V\colon\ 2^{-k}<|u_i|\leq 2^{1-k}\},\quad u\in\{s,t\},\ k=1,2,\ldots.
\]
Then
\[
\sum_{(i,j)\in E_1}|s_it_j|
\leq \sum_{k,l=1}^{\infty}2^{1-k}2^{1-l}|E_1\cap (I_k(s)\times I_l(t))|.
\]
Observe that
\[
|E_1\cap (I_k(s)\times I_l(t))|\leq |E_1\cap ((I_k(s)\cup I_l(t))\times (I_k(s)\cup I_l(t)))|\leq
M|I_k(s)\cup I_l(t)|.
\]
We also have 
\[
|E_1\cap (I_k(s)\times I_l(t))|\leq |E\cap (I_k(s)\times I_l(t))|\leq \min\{d|I_k(s)|,d|I_l(t)|\}.
\]
Therefore
\[
\sum_{(i,j)\in E_1}|s_it_j|\leq 
4\sum_{k,l=1}^\infty2^{-k-l}(\min\{M|I_k(s)|,d|I_l(t)|\}+\min\{d|I_k(s)|,M|I_l(t)|\}).
\]
We have
\begin{align*}
\sum_{k,l=1}^{\infty}&2^{-k-l}\min\{M|I_k(s)|,d|I_l(t) |\}
\\
&\leq
M\sum_{k=1}^\infty 2^{-k}|I_k(s)|\sum_{l\colon 2^{-l}\leq 2^{-k}\sqrt{d/M}}2^{-l}
+d \sum_{l=1}^\infty 2^{-l}|I_l(t)|\sum_{k\colon 2^{-k}< 2^{-l}\sqrt{M/d}}2^{-k}
\\
&\leq M\sum_{k=1}^\infty 2^{1-2k}|I_k(s)|\sqrt{d/M}+d\sum_{l=1}^\infty 2^{1-2l}|I_l(t)|\sqrt{M/d}
\leq 2\sqrt{dM}(\|s\|_2^2+\|t\|_2^2)
\leq 4\sqrt{dM}.
\end{align*}
In a similar way we show that
\[
\sum_{k,l=1}^{\infty}2^{-k-l}\min\{M|I_l(t)|,d|I_k(s)|\}\leq 4\sqrt{dM}
\]
and \eqref{eq:boundE1} easily follows.

\end{proof}

The next proposition shows how to apply previous general bounds to the case of the Hamming hypercube.
}

\begin{proposition}
\label{prop:z2dnorms}
Let $G=(V,E)$ be the Hamming hypercube $V=\zet_2^d=\{0,1\}^d$. Then $N_{\ve,p}(G)\sim \sqrt{p}$ for $2\leq p\leq d$,
$N_{\ve,p}(G)\sim d$ for $p\geq d2^d$ and
\begin{equation}
\label{eq:estcube}
\sqrt{d\frac{\ln p}{\ln (\frac{ed}{\ln p})}}
\lesssim N_{\ve,p}(G)
\lesssim \sqrt{d\ln p}\quad \mbox{for }\max\{d,2\}\leq p\leq d2^d.
\end{equation}
\end{proposition}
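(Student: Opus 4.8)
The statement splits into three ranges, each using the general lemmas in a different way. The regime $2\le p\le d$ is immediate from Lemma \ref{lem:estNp}, which already gives $N_{\ve,p}(G)\sim\sqrt p$ for $1\le p\le d$. The regime $p\ge d2^d$ is also easy: the hypercube has maximum degree $d$, so $N_{\ve,p}(G)\le d$ by Lemma \ref{lem:estNp}, and conversely $\|\ind_E\|_{\ve,p}\ge\|\ind_E\|_{\ve,1}\gtrsim\|(\ind_E)\|$ (using Proposition \ref{prop:szacrad} with a set $I$ of cardinality $\min\{p,|E|\}=|E|=d2^d\ge p$, i.e.\ $I=E$), and $\|\ind_{\zet_2^d}\|=d$ since the $d$-cube is $d$-regular. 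So the content is the two-sided bound \eqref{eq:estcube} for $\max\{d,2\}\le p\le d2^d$.

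For the upper bound in \eqref{eq:estcube} I would apply the sparsity bound \eqref{eq:estbysparsity}: it suffices to show $\min\{p,|E\cap(I\times I)|\}/|I|\lesssim \ln p$ for every $\emptyset\ne I\subset\zet_2^d$, equivalently that a subset $I$ of the $d$-cube with $|I|=N$ vertices spans at most $O(N\log N)+O(N\cdot\frac{\ln p}{?})$... more precisely it is enough that $|E\cap(I\times I)|\le |I|\log_2|I|$ always (the classical edge-isoperimetric inequality on the cube, Harper/Bernstein: a set of $N$ vertices spans at most $\tfrac12 N\log_2 N$ undirected edges, hence $\le N\log_2 N$ directed ones), together with the trivial truncation at $p$: if $|I|\le p$ then $\min\{p,|E\cap(I\times I)|\}/|I|\le \log_2|I|\le\log_2 p$, while if $|I|>p$ then $\min\{p,\cdots\}/|I|<1$. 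Feeding $M\lesssim\ln p$ and $d$ into \eqref{eq:estbysparsity} gives $N_{\ve,p}(G)\lesssim\sqrt{d\ln p}$.

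For the lower bound I would use the expansion bound \eqref{eq:estbyexp} with $I=J$ a subcube: take $I=\{0,1\}^k\times\{0\}^{d-k}$, a $k$-dimensional subcube, so $|I|=2^k$ and $|E\cap(I\times I)|=k2^k$ (it is internally $k$-regular). Then \eqref{eq:estbyexp} gives $N_{\ve,p}(G)\gtrsim\min\{p,k2^k\}/2^k$. Now optimize over $k$: choose $k$ as large as possible subject to $k2^k\le p$, i.e.\ roughly $2^k\asymp p/k$ and $k\asymp\log_2(p/k)\asymp\ln p/\ln(\tfrac{ed}{\ln p})$ in the range $p\le d2^d$ (so that $k\le d$ is available); with this choice $\min\{p,k2^k\}=k2^k$ and the ratio is $k$, giving $N_{\ve,p}(G)\gtrsim k\asymp \ln p/\ln(\tfrac{ed}{\ln p})$. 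That is not yet $\sqrt{d\,\ln p/\ln(\tfrac{ed}{\ln p})}$, so one needs a better rectangle: I would instead take $I$ a $k$-subcube and $J=V$ the whole cube (or a large subcube of dimension close to $d$), so $|I||J|=2^k\cdot 2^d$ and $|E\cap(I\times J)|\asymp d2^k$ (each of the $2^k$ vertices of $I$ has all $d$ of its neighbours in $V$), whence \eqref{eq:estbyexp} gives $N_{\ve,p}(G)\gtrsim\min\{p,d2^k\}/\sqrt{2^{k+d}}$; choosing $k$ so that $d2^k\asymp p$, i.e.\ $2^k\asymp p/d$, makes this $\asymp p/\sqrt{2^{k+d}}\asymp p/\sqrt{(p/d)2^d}=\sqrt{pd/2^d}$, which is too small when $p\ll 2^d$. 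The right move is a two-parameter rectangle $I\times J$ with $I$ a $k$-subcube and $J$ its $\ell$-neighbourhood-type set: concretely pick $I=\{x:x_{k+1}=\dots=x_d=0\}$ of dimension $k$ and $J=\{x:x_{k+1}=\dots=x_d=0\text{ or one coordinate among }k+1,\dots,d\text{ flipped}\}$, so $|J|\asymp (d-k)2^k$ while $|E\cap(I\times J)|\gtrsim d2^k$; then the ratio is $\min\{p,d2^k\}/\sqrt{2^k(d-k)2^k}=\min\{p,d2^k\}/(2^k\sqrt{d-k})$, and with $2^k\asymp p/d$ (legal since $p\ge d$ forces $k\ge0$, and $p\le d2^d$ forces $k\le d$) this is $\asymp (d2^k)/(2^k\sqrt d)=\sqrt d$ when $p\asymp d2^k$ — still not enough.

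The correct balance, which I expect to be the main obstacle, is to feed into \eqref{eq:estbyexp} a rectangle $I\times J$ with $I$ of size $2^k$ and $J$ a ball of radius $r$ around $I$, exploiting that such $J$ has size $\asymp 2^k\binom{d-k}{\le r}$ while the edge count is $\asymp (d-k+k)2^k$-ish but concentrated: the genuinely sharp choice matches the known answer $\sqrt{d\cdot \ln p/\ln(ed/\ln p)}$, so I would set $k$ with $2^k\asymp p/d$, note $k\asymp\ln(p/d)$, take $J$ the set of vertices within Hamming distance $r$ of the $k$-subcube with $r\asymp k$ so that $|J|\lesssim 2^k\binom{d}{r}\lesssim 2^k(ed/r)^r$, and bound $|E\cap(I\times J)|\gtrsim \min\{p,$ (number of edges leaving $I$)$\}\asymp\min\{p,d2^k\}\asymp p$ when $d2^k\gtrsim p$; then \eqref{eq:estbyexp} yields $N_{\ve,p}(G)\gtrsim p/\sqrt{2^d\cdot 2^k(ed/r)^r}$, and one checks that with $r\asymp \ln p/\ln(ed/\ln p)$ this equals $\sqrt{d\,\ln p/\ln(ed/\ln p)}$ up to constants. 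Verifying these estimates on binomial sums and that the chosen $k,r$ stay in the admissible range $k+r\le d$ throughout $\max\{d,2\}\le p\le d2^d$ is the routine-but-delicate part; the conceptual step is just ``subcube times Hamming ball into \eqref{eq:estbyexp}'' for the lower bound and ``cube edge-isoperimetry into \eqref{eq:estbysparsity}'' for the upper bound.
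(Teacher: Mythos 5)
Your treatment of the three ranges is organized correctly, and the upper bound in \eqref{eq:estcube} is exactly the paper's argument: edge-isoperimetry (Harper) gives $|E\cap(I\times I)|\le|I|\log_2|I|$, which plugged into \eqref{eq:estbysparsity} (with the truncation at $p$) yields $N_{\ve,p}(G)\lesssim\sqrt{d\ln p}$. The endpoint regimes $2\le p\le d$ and $p\ge d2^d$ are also fine (the paper gets the second from \eqref{eq:estbyexp} with $I=J=V$ rather than your Proposition~\ref{prop:szacrad}, but both are correct).

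The lower bound in \eqref{eq:estcube} has a genuine gap, and it is not a routine-computation issue: the ``$k$-subcube $\times$ Hamming ball'' rectangle can never produce more than $\sqrt{d}$. If $I$ is a $k$-subcube and $J\supseteq I$ contains all neighbours of $I$ (which is needed to get the edge count $d\,|I|$ you use), then $J$ contains the outer vertex boundary of $I$, so $|J|\gtrsim(d-k)|I|$, hence
\[
\frac{|E\cap(I\times J)|}{\sqrt{|I||J|}}\le\frac{d\,|I|}{\sqrt{|I|\cdot(d-k)|I|}}=\frac{d}{\sqrt{d-k}}\lesssim\sqrt d,
\]
and enlarging the radius $r$ only \emph{increases} $|J|$ without adding a single edge incident to $I$, so the ratio drops below $\sqrt d$. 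In your final display the claimed conclusion $\gtrsim\sqrt{d\ln p/\ln(ed/\ln p)}$ cannot follow; with your own choice $2^k\asymp p/d$ the bound is $d/\sqrt{\binom{d}{\le r}}\le\sqrt d$, decreasing in $r$.

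The paper avoids this by choosing two adjacent Hamming levels rather than a subcube: $I=V_{k-1}$ (vertices of weight $k-1$) and $J=V_k$, with $|V_l|=a_l=\binom d l$. Every vertex of $V_k$ has exactly $k$ neighbours in $V_{k-1}$, so $|E\cap(V_{k-1}\times V_k)|=ka_k$; feeding this into \eqref{eq:estbyexp} when $p\ge ka_k$ gives
\[
N_{\ve,p}(G)\gtrsim\frac{ka_k}{\sqrt{a_{k-1}a_k}}=k\sqrt{\frac{a_k}{a_{k-1}}}=\sqrt{k(d-k+1)}\sim\sqrt{dk}\quad(1\le k\le d/2).
\]
The gain over the subcube is that $|V_k|/|V_{k-1}|=(d-k+1)/k$ is a bounded ratio, while the bipartite edge density is $k$ per vertex of $V_k$, so the extra factor $\sqrt{(d-k+1)/k}$ is not destroyed by an oversized $J$. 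Since $ka_k\le(2ed/k)^k$, the constraint $p\ge ka_k$ can be met for $k$ of order $\ln p/\ln(ed/\ln p)$ throughout $\max\{d,2\}\le p\le d2^d$, which yields the stated $\sqrt{d\ln p/\ln(ed/\ln p)}$. You should replace the ``ball around a subcube'' rectangle by these Hamming slices.
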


\begin{proof}
If $p\geq |E|=d2^d$ then \eqref{eq:estbyexp} applied with $I=J=V$ shows that
$N_{\ve,p}(E)\gtrsim |E|/|V|=d$. Since $N_{\ve,p}(G)\leq d$ by Lemma \ref{lem:estNp}
we get the first part of the assertion.

To see the lower estimate in \eqref{eq:estcube} define, for $0\leq l\leq n$, $V_l$ as the set of all vertices from $V$ with exactly
$l$ coordinates equal to $1$. Then $a_l:=|V_l|=\binom{d}{l}$. 
There are exactly $ka_k$ edges in $V_k\times V_{k-1}$, so for $1\leq k\leq d/2$ such that $p\geq ka_k$ we have by \eqref{eq:estbyexp}
\[
N_{\ve,p}(G)\gtrsim  ka_{k-1}^{-1/2}a_k^{1/2} =\sqrt{(d-k+1)k}\sim\sqrt{dk}.
\]
We have $ka_k=k\binom{d}{k}\leq (2ed/k)^k$, hence the condition
$p\geq ka_k$ holds (recall that $d\leq p\leq d2^d$) for 
$k$ of the order $\ln p/\ln(ed/\ln p)$ and the lower bound in \eqref{eq:estcube} follows.

To get the upper bound we will use the second part of Lemma \ref{lem:estNp2}.
Harper's edge-isoperimetric inequality on the hypercube \cite{Ha} states that for any set $I\subset V$ with cardinality
at most $2^k$, $|E\cap (I\times I)|\leq k2^k$. So for $p\geq 2$, 
\[
\sup_{\emptyset\neq I\subset V}\frac{\min\{p,|E\cap(I\times I)|\}}{|I|}
\leq 1+\sup_{\emptyset\neq I\subset V,|I|\leq p}\frac{|E\cap(I\times I)|}{|I|}\lesssim \ln p
\]
and \eqref{eq:estbysparsity} yields the upper bound in \eqref{eq:estcube}.
\end{proof}

We may easily extend bounds from the previous proposition to the case $\zet_m^d$, due to the following simple lemma.

\begin{lemma}
\label{lem:zetmdnorms}
For any $p\geq 1$ and positive integers $k,d$ we have 
\[
N_{\ve,p}(\zet_2^d)\leq N_{\ve,p}(\zet_{2k}^d)\leq 2N_{\ve,p}(\zet_{2}^d),\quad
N_{\ve,p}(\zet_2^d)\leq N_{\ve,p}(\zet_{2k+1}^d)\leq 3N_{\ve,p}(\zet_{2}^d).
\]
\end{lemma}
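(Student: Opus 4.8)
The plan is to exhibit, in each of the four inequalities, an explicit graph homomorphism (or a short composition of graph maps and edge subsets) between the relevant tori that induces the corresponding norm comparison, using two elementary monotonicity principles for $N_{\ve,p}$. First I would record these principles: (a) if $H$ is a subgraph of $G$ on the same vertex set (i.e. $E(H)\subset E(G)$), then $N_{\ve,p}(H)\le N_{\ve,p}(G)$, which is immediate from the contraction principle \cite[Theorem 4.4]{LT} since $\ind_{E(H)}\le\ind_{E(G)}$ entrywise; and (b) if $\pi\colon V(G)\to V(G')$ is a surjective graph homomorphism such that each fiber $\pi^{-1}(v')$ has at most $r$ elements and each edge of $G'$ is the image of at least one edge of $G$, then $N_{\ve,p}(G')\le r\,N_{\ve,p}(G)$. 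For (b) the point is that pulling back unit vectors $s',t'$ on $V(G')$ to $s,t$ on $V(G)$ via $s_i=s'_{\pi(i)}$, $t_j=t'_{\pi(j)}$ inflates the $\ell_2$-norm by at most $\sqrt r$, and $\sum_{(i,j)\in E(G)}\ind_{\pi(i)\sim\pi(j)}\ve_{ij}s'_{\pi(i)}t'_{\pi(j)}$ is (by the contraction principle, after grouping the $\ve_{ij}$ over each edge of $G'$) comparable to $\sum_{(i',j')\in E(G')}\ve_{i'j'}s'_{i'}t'_{j'}$ — one keeps a single representative edge in each fiber pair and discards the rest, which only decreases the Rademacher norm.

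Next I would apply these to the concrete maps. For $N_{\ve,p}(\zet_2^d)\le N_{\ve,p}(\zet_{2k}^d)$: the subgroup $\{0,k\}^d\subset\zet_{2k}^d$ spans a subgraph isomorphic to $\zet_2^d$ (since $k$ and $-k$ coincide mod $2k$, so $0$ and $k$ are adjacent in $\zet_{2k}$), so principle (a) applies. For the upper bound $N_{\ve,p}(\zet_{2k}^d)\le 2N_{\ve,p}(\zet_2^d)$: consider the coordinatewise reduction $\pi\colon\zet_{2k}^d\to\zet_2^d$ sending $x\mapsto x\bmod 2$. A neighbor in $\zet_{2k}$ changes one coordinate by $\pm1$, hence flips its parity, so $\pi$ is a graph homomorphism onto $\zet_2^d$ with fibers of size $k^d$ — too large for (b) directly. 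Instead I would observe that $\pi$ is $k^d$-to-one but decompose $\zet_{2k}$ itself: the cycle $C_{2k}$ (with $k\ge1$) maps $2$-to-one onto $C_2$ by the map $j\mapsto j\bmod 2$ after first passing through an intermediate step. The cleaner route: embed $\zet_{2k}\cong$ covering of $C_2$, and note $C_{2k}$ decomposes as an edge-disjoint union handled by a $2$-to-one quotient $j\mapsto\lceil j/k\rceil\bmod 2$ — I will choose the quotient $\zet_{2k}\to\zet_2$, $j\mapsto\lfloor 2j/(2k)\rfloor=\lfloor j/k\rfloor$, which identifies the two arcs $\{0,\dots,k-1\}$ and $\{k,\dots,2k-1\}$; adjacent vertices of $C_{2k}$ go to equal or adjacent vertices of $C_2$, each edge of $C_2$ is hit, and taking this map in each of the $d$ coordinates gives a homomorphism $\zet_{2k}^d\to\zet_2^d$ with fibers of size $k^d$; that is still too big, so the correct statement of (b) to use is the sharper one where $r$ is the maximum, over an edge $(i',j')$ of $G'$, of the number of $G$-edges mapping into $(i',j')$ that share a common $G$-vertex — and here, because only one coordinate changes along a $\zet_{2k}^d$-edge, at most one other parallel edge is incident to a given vertex, giving effective multiplicity $2$. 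For the odd case $\zet_{2k+1}$: the lower bound $N_{\ve,p}(\zet_2^d)\le N_{\ve,p}(\zet_{2k+1}^d)$ follows by finding a subgraph of $C_{2k+1}$ isomorphic to a disjoint-edge structure dominating an edge of $C_2$ — more simply, since $C_{2k+1}$ has chromatic number $3$ one cannot embed $\zet_2^d$ as an induced subgraph, but one only needs the norm inequality, which follows from principle (a) by selecting any edge; and the upper bound $N_{\ve,p}(\zet_{2k+1}^d)\le 3N_{\ve,p}(\zet_2^d)$ uses the near-quotient $\zet_{2k+1}\to\zet_2$ with one fiber of size $k$ and one of size $k+1$, where a constant bounded by $3$ absorbs the odd-cycle defect (one "bad" edge per coordinate where both endpoints land in the same part).

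The main obstacle I anticipate is making the quotient argument honest: the naive fiber-size bound $k^d$ is useless, and one genuinely needs the observation that along any single edge of $\zet_m^d$ exactly one coordinate moves, so the pullback Rademacher sum, after applying the contraction principle to collapse each bundle of parallel $G$-edges over a fixed $G'$-edge down to one representative, loses only a bounded multiplicative factor ($2$ in the even case, $3$ in the odd case) rather than a factor exponential in $d$. I would make this precise by writing the pullback sum as a sum over $G'$-edges of inner sums over parallel $G$-edges, noting these inner sums are themselves Rademacher sums with $0$-$1$ coefficients indexed by at most two edges incident to any vertex, and invoking once more $\Ex\|(\ve_{ij}c_{ij})\|\le\Ex\|(\ve_{ij}d_{ij})\|$ for $|c_{ij}|\le|d_{ij}|$ together with the triangle inequality in $L_p$. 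The remaining verifications — that the coordinatewise maps are graph homomorphisms, that fibers/multiplicities are as claimed, and that unit vectors pull back with $\ell_2$-norm multiplied by the square root of the fiber size — are routine.
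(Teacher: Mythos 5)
Your lower bound is the same as the paper's (subgraph inclusion via the contraction/Jensen principle), but the specific embedding $\{0,k\}^d\subset\zet_{2k}^d$ is wrong for $k\ge 2$: in $\zet_{2k}$ the vertices $0$ and $k$ are not adjacent unless $k=1$. The trivial embedding $\{0,1\}^d\subset\{0,\dots,m-1\}^d$ works for every $m\ge 2$ and is what the paper uses.

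The upper bound is where the gap is, and it is structural. Your principle (b) — pulling back unit vectors from $G'$ to $G$ along a quotient $\pi\colon V(G)\to V(G')$ and discarding all but one preimage edge — gives $N_{\ve,p}(G')\lesssim N_{\ve,p}(G)$, i.e. it bounds the norm of the \emph{quotient} by the norm of the \emph{source}. For $\pi\colon\zet_{2k}^d\to\zet_2^d$ that is the wrong direction: you need $N_{\ve,p}(\zet_{2k}^d)\lesssim N_{\ve,p}(\zet_2^d)$. Trying to reverse the arrow by ``collapsing each bundle of parallel $G$-edges down to one representative'' cannot work as stated: the vectors $s,t$ live on $V(G)$, and there is no $\ell_2$-preserving way to push them forward to $V(G')$ so that a bundle of independent Rademacher variables over a single $G'$-edge collapses, up to constants, into a single Rademacher weighted by the aggregated $\ell_2$ mass — that merging step is available for Gaussians, but for Rademachers it costs a factor $\sqrt p$ via Khintchine, which ruins the bound. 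The paper sidesteps quotients entirely: it writes the adjacency matrix $A$ of $\zet_{2k}^d$ as $A=2^{1-d}\sum_{\bfi\in\{1,2\}^d}A_\bfi$, where each $A_\bfi$ is block diagonal with blocks isomorphic to the adjacency matrix of $\zet_2^d$ (built from the two interleaved pair-partitions $\{\{2l-1,2l\}\}$ and $\{\{2l,2l+1\}\}$ of $\zet_{2k}$, taken coordinatewise). Each edge of $\zet_{2k}^d$ lies in exactly $2^{d-1}$ of the $E_\bfi$ because only one coordinate changes along an edge (this is precisely the structural fact you noticed, but used to build a tiling rather than to control fiber multiplicity). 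Then $\|A_\bfi\|_{\ve,p}=N_{\ve,p}(\zet_2^d)$ by the block-diagonal identity of Lemma~\ref{lem:blocknorm}(i), and subadditivity of $\|\cdot\|_{\ve,p}$ over the $2^d$ terms with total weight $2^{1-d}\cdot 2^d=2$ gives the constant $2$; in the odd case a third pair-family $\{\{2k+1,1\}\}$ is needed, whence $3$. I recommend replacing the quotient framing with this explicit edge-decomposition.
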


\begin{proof}
Lower bounds follow from the fact that $\zet_2^d$ is a subgraph of $\zet_m^d$ for any $m\geq 2$. 

To show the upper bound, we first consider the case $\zet_{2k}^d$. Then there are two partitions of
$\zet_{2k}=\{1,\ldots,2k\}$ into pairs:  $I_{1,l}:=\{2l-1,2l\}$, $1\leq l\leq k$ and   
$I_{2,l}=\{2l,2l+1\}$,  $1\leq l\leq k$ (we identify $m+1$ with $1$). For a fixed
$\bfi=(i_1,i_2,\ldots,i_d)\in \{1,2\}^d$ we may treat 
$I_{\bfi,\bfl}=I_{i_1,l_1}\times I_{i_2,l_2}\times\cdots\times I_{i_d,l_d}$, $\bfl=(l_1,\ldots,l_d)\in \{1,\ldots,k\}^d$
as disjoint subgraphs of $\zet_{2k}^d$ isomorphic to $\zet_2^d$. 
Let $E_{\bfi }$ denote edges of $\zet_{2k}^d$ joining vertices from 
$I_{\bfi,\bfl}$ for some $\bfl\in \{1,\ldots,k\}^d$ and let $A_{\bfi}$ be the adjacency matrix of
$(\{1,\ldots,m\}^d,E_{\bfi})$. Then $A_\bfi$ is a block-diagonal matrix with
$k^d$-blocks such that each block $A_{\bfi,\bfl}$ is isomorphic to the adjacency matrix of $\zet_2^d$.
Thus part (i) of Lemma \ref{lem:blocknorm} yields 
\[
\|A_\bfi\|_{\ve,p}=\max_{\bfl} \|A_{\bfi,\bfl}\|_{\ve,p}=N_{\ve,p}(\zet_2^d).
\]
Observe that every edge of $\zet_{2k}^d$ belongs to exactly $2^{d-1}$ sets
$E_{\bfi}$ (if vertices of this edge differ at the cooordinate $k$, there is only one way to choose $i_k$ and all
other $i_j$  $j\neq k$ may be chosen in an arbitrary way). Hence we have $A=2^{1-d}\sum_\bfi A_\bfi$ (where $A$ is the adjacency
matrix of $\zet_{2k}^d$) and
\[
N_{\ve,p}(\zet_{2k}^d)=\|A\|_{\ve,p}\leq 2^{1-d}\sum_\bfi\|A_\bfi\|_{\ve,p}
=2^{1-d}2^dN_{\ve,p}(\zet_2^d)=2N_{\ve,p}(\zet_2^d).
\]

In the case $\zet_{2k+1}^d$ we proceed in a similar way, but we consider 3 types of disjoint families of pairs
$\mathcal{I}_1=\{\{2l-1,2l\}, \ 1\leq l\leq k\}$, $\mathcal{I}_2=\{\{2l,2l+1\}, \ 1\leq l\leq k\}$
and  $\mathcal{I}_3=\{\{2k+1,1\}\}$.

\end{proof}

\begin{corollary}
\label{cor:tor}
Let $d,m\geq 2$ and $G$ be the torus $\zet_m^d$. Then
\begin{equation}
\label{eq:estrandtorus}
\Ex\|(\ind_{i\sim j}\ve_{ij})_{i,j\in \zet_m^d}\|\sim \sqrt{d}+N_{\ve,d\log m}(\zet_2^d).
\end{equation}
In particular for $2\leq m\leq e^d$ we have 
\[
 \sqrt{d\log(d\log m)/\log(ed/\log(d\log m))}\leq \Ex\|(\ind_{i\sim j}\ve_{ij})_{i,j\in \zet_m^d}\| \lesssim \sqrt{d\log(d\log m)}.
\]
\end{corollary}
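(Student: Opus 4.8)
The plan is to establish the two-sided estimate $\Ex\|(\ind_{i\sim j}\ve_{ij})_{i,j\in\zet_m^d}\|\sim\sqrt d+N_{\ve,d\log m}(\zet_2^d)$, and then to read off the concrete bounds for $2\le m\le e^d$ by substituting the estimates of Proposition~\ref{prop:z2dnorms}, using $\log(m^d+1)\sim d\log m$ together with the elementary fact (immediate from Proposition~\ref{prop:szacrad}, since $\max_{|I|\le Cp}\|\ind_I\|\le C\max_{|I|\le p}\|\ind_I\|$) that $N_{\ve,p}(G)$ changes only by a constant factor when $p$ is rescaled by a constant factor. Throughout I write $A=\ind_E$ for the adjacency matrix of $\zet_m^d$ and $n=m^d$; note that $\zet_m^d$ has degree $2d$ for $m\ge3$ and $d$ for $m=2$.

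For the lower bound, the summand $\sqrt d$ is produced at once by the first two terms of Theorem~\ref{thm:lower}, since every row and column of $A$ has Euclidean norm at least $\sqrt d$. For the summand $N_{\ve,d\log m}(\zet_2^d)$, in the case $m\ge4$ I would feed the third term of \eqref{eq:lower} with the observation, implicit in the proof of Lemma~\ref{lem:zetmdnorms}, that $\zet_m^d$ contains $N':=\lfloor m/2\rfloor^d\ge2$ pairwise vertex-disjoint induced copies $C_1,\dots,C_{N'}$ of $\zet_2^d$ (the products $\prod_{j=1}^d\{2l_j-1,2l_j\}$, taken inside $\{1,\dots,2k\}^d$ when $m=2k+1$). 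Then $k:=N'-1$ is an admissible index in the maximum in \eqref{eq:lower}, and for any $I\subset[n]$ with $|I|\le k$ there is some $C_l$ disjoint from $I$; restricting $s,t$ to that $C_l$, whose induced edge set together with the corresponding (i.i.d.\ symmetric) $\ve_{ij}$ is a copy of $\zet_2^d$, gives
\[
\min_{|I|\le k}\ \sup_{\|s\|_2,\|t\|_2\le1}\Big\|\sum_{i,j\notin I}\ind_E\ve_{ij}s_it_j\Big\|_{\log(k+1)}\ \ge\ N_{\ve,\log(k+1)}(\zet_2^d),
\]
and $\log(k+1)=d\log\lfloor m/2\rfloor\sim d\log m$, so Theorem~\ref{thm:lower} yields the claim. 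For $m\in\{2,3\}$ one simply notes $N_{\ve,d\log m}(\zet_2^d)\le\sqrt{d\log m}\le\sqrt{d\log3}\lesssim\sqrt d$ by Lemma~\ref{lem:estNp}, so this term is dominated by the $\sqrt d$ already obtained.

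For the upper bound, when $m\ge3$ I would average $A$ over the decomposition from the proof of Lemma~\ref{lem:zetmdnorms}: $A=c_m\sum_{\bfi}A_{\bfi}$, where $c_m=2^{1-d}$ and $\bfi$ runs over $\{1,2\}^d$ if $m$ is even, $c_m=3^{1-d}$ and $\bfi$ runs over $\{1,2,3\}^d$ if $m$ is odd, and each $A_{\bfi}$ is a block-diagonal $0$--$1$ matrix of size $n$ whose nonzero blocks are all isomorphic to $\ind_{E(\zet_2^d)}$. By the triangle inequality for the operator norm, $\Ex\|\ve\cdot A\|\le c_m\sum_\bfi\Ex\|\ve\cdot A_{\bfi}\|\le3\max_{\bfi}\Ex\|\ve\cdot A_{\bfi}\|$, and Lemma~\ref{lem:blocknorm} bounds each term by $C\big(\Ex\|\ve\cdot\ind_{E(\zet_2^d)}\|+N_{\ve,\log(n+1)}(\zet_2^d)\big)$. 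Here $\Ex\|\ve\cdot\ind_{E(\zet_2^d)}\|\lesssim\Ex\|g\cdot\ind_{E(\zet_2^d)}\|\lesssim\sqrt d$ by the Bernoulli--Gaussian comparison \eqref{eq:comparison} followed by \eqref{eq:gaussian} applied to the $2^d\times2^d$ matrix $\ind_{E(\zet_2^d)}$ (rows and columns of length $\sqrt d$, and $\sqrt{\log(2^d)}\sim\sqrt d$); and $\log(n+1)\sim d\log m$, so with the stability of $N_{\ve,p}$ under constant rescaling the bound follows. For $m=2$, where the decomposition degenerates, one just applies \eqref{eq:comparison} and \eqref{eq:gaussian} directly to $\ind_{E(\zet_2^d)}$ to get $\Ex\|\ve\cdot\ind_{E(\zet_2^d)}\|\lesssim\sqrt d\sim\sqrt d+N_{\ve,d\log2}(\zet_2^d)$.

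The step I expect to be the main obstacle is not conceptual but the careful verification that the product-of-pairs sets really are \emph{induced} subgraphs of $\zet_m^d$ isomorphic to $\zet_2^d$ — i.e.\ that two vertices in such a set are $\zet_m^d$-adjacent precisely when they differ in exactly one coordinate, which needs $m\ge3$ to rule out a $\pm1$ wrap-around coincidence — together with the matching bookkeeping for the multiplicity ``each edge lies in exactly $2^{d-1}$ (resp.\ $3^{d-1}$) of the sets $E_{\bfi}$'', which is precisely where $m=2$ must be split off and handled by the direct Gaussian comparison. The remaining bounded-parameter situations affect only universal constants and are absorbed into the implied constants.
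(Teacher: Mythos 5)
Your lower-bound argument is essentially the paper's: Theorem~\ref{thm:lower} produces $\sqrt d$ from the row/column $\ell_2$-norms, and $N_{\ve,d\log m}(\zet_2^d)$ from restricting to one of $\lfloor m/2\rfloor^d$ vertex-disjoint copies of $\zet_2^d$ when $m\ge4$; for $m\in\{2,3\}$ the $N$-term is $\lesssim\sqrt d$ by Lemma~\ref{lem:estNp} and is absorbed, just as in the paper. Your upper bound, however, takes a genuinely different route. The paper identifies $\zet_m^d$ with a subgraph of a circulant graph on $m^d$ vertices (bands $\pm m^{k-1},\ \pm(m-1)m^{k-1}$, $k\le d$) and invokes Corollary~\ref{cor:boundbydegree} directly, using Lemma~\ref{lem:zetmdnorms} only afterwards to convert $N_{\ve,d\log m}(\zet_m^d)$ into $N_{\ve,d\log m}(\zet_2^d)$. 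You instead redeploy the block-diagonal decomposition behind Lemma~\ref{lem:zetmdnorms} and close with Lemma~\ref{lem:blocknorm}(ii) plus the Gaussian comparison \eqref{eq:comparison}--\eqref{eq:gaussian}. For even $m$ (and for $m=2$, treated separately) this is correct and is a legitimate shortcut past the circulant machinery.

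For odd $m$ the multiplicity count is wrong, and this is a genuine gap. With $\mathcal{I}_1,\mathcal{I}_2,\mathcal{I}_3$ as in the proof of Lemma~\ref{lem:zetmdnorms}, each vertex of $\zet_{2k+1}$ lies in pairs of exactly \emph{two} of the three families: $1$ only in $\mathcal{I}_1$ and $\mathcal{I}_3$; each $v\in\{2,\dots,2k\}$ only in $\mathcal{I}_1$ and $\mathcal{I}_2$; and $2k+1$ only in $\mathcal{I}_2$ and $\mathcal{I}_3$. Hence an edge of $\zet_m^d$ differing in one coordinate $r$ lies in exactly $1\cdot 2^{d-1}$ of the $3^d$ sets $E_\bfi$, not $3^{d-1}$ as you assert. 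The honest identity is $\ind_E=2^{1-d}\sum_{\bfi\in\{1,2,3\}^d}A_\bfi$, and your triangle/contraction step then yields $\Ex\|\ve\cdot A\|\le 2(3/2)^d\max_\bfi\Ex\|\ve\cdot A_\bfi\|$, with a prefactor growing in $d$; the normalization $A=3^{1-d}\sum A_\bfi$ you wrote is in fact entrywise strictly smaller than $A$, so it does not even give an upper bound via the contraction principle. This cannot be absorbed into the implied constant: for $m$ of order $e^d$ the crude Gaussian bound already gives $\sqrt{d\log m}\sim d$, whereas the claimed answer is $\sqrt d+N_{\ve,d^2}(\zet_2^d)\sim\sqrt d$, so the decomposition step really must produce a dimension-free factor. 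The paper's detour through the circulant embedding and Corollary~\ref{cor:boundbydegree} is exactly what sidesteps this; if you want to keep the direct averaging, you need a different family of block-diagonal $0$--$1$ matrices for odd $m$ whose average dominates $\ind_E$ with a dimension-free prefactor.
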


\begin{proof}
We may identify $\{1,\ldots,m\}^d$ with $\{1,\ldots,m^d\}$ by $(i_1,\ldots,i_d)\rightarrow \sum_{k=1}^d i_km^{k-1}$.
After such identification the graph $\zet_m^d$ becomes a subgraph of a circulant graph on $n=m^d$ vertices defined via $4d$ bands corresponding to
$\pm m^{k-1}$, $\pm (m-1)m^{k-1}$, $k=1,\ldots,d$. Thus by Corollary \ref{cor:boundbydegree} we have
\[
\Ex\|(\ind_{i\sim j}\ve_{ij})_{i,j\in \zet_m^d}\|\lesssim \sqrt{4d}+N_{\ve,d\log m}(\zet_m^d)\sim \sqrt{d}+N_{\ve,d\log m}(\zet_2^d),
\]
where the last equivalence holds by Lemma \ref{lem:zetmdnorms}.

For $m=2,3$ by Lemma \ref{lem:estNp} we have $N_{d\log m}(\zet_2^d)\sim \sqrt{d}\lesssim  \Ex\|(\ind_{i\sim j}\ve_{ij})_{i,j\in \zet_m^d}\|$ which shows the lower bound in \eqref{eq:estrandtorus}
for such $m$. For $m\geq 4$ observe that $\zet_m^d$ contains at least 
$(\lfloor m/2\rfloor)^d\geq m^{d/3}:=k$ disjoint copies of $\zet_2^d$. Thus Theorem \ref{thm:lower}
yields that  for such $m$
\[
 \Ex\|(\ind_{i\sim j}\ve_{ij})_{i,j\in \zet_m^d}\|\gtrsim \sqrt{d}+N_{\ve,\log (k+1)}(\zet_2^d)\sim \sqrt{d}+N_{\ve,d\log m}(\zet_2^d). 
\]

The last part of the assertion follows by \eqref{eq:estrandtorus} and Proposition \ref{prop:z2dnorms}.
\end{proof}

\section{Extensions}
\label{sec:ext}

In this section we will consider a more general class of random matrices with entries satisfying the condition
\begin{equation}
\label{eq:ass}
\|X_{ij}\|_{2p}\leq \alpha \|X_{ij}\|_p \quad \mbox{ for all $i,j\leq n$ and $p\geq 1$},
\end{equation}
where $\alpha\geq 1$ is a fixed constant. We may generalize to this case the bound from Theorem \ref{thm:lower}.

\begin{theorem}
\label{thm:lowergen}
Let $(X_{ij})_{i,j\leq n}$ be independent, mean zero r.v's satisfying condition \eqref{eq:ass}. Then
\begin{align}
\notag
\Ex\|(X_{ij})\|&\gtrsim_\alpha 
\max_{1\leq i\leq n}\left(\sum_{j=1}^n \Ex X_{ij}^2\right)^{1/2}
+\max_{1\leq j\leq n}\left(\sum_{i=1}^n \Ex X_{ij}^2\right)^{1/2}
\\
\label{eq:lowergen}
&\phantom{\gtrsim_\alpha }
+\max_{1\leq k\leq n}\min_{I\subset [n],|I|\leq k}
\sup_{\|s\|_2,\|t\|_2\leq 1}\left\|\sum_{i,j\notin I}X_{ij}s_it_j\right\|_{\log (k+1)}.
\end{align}
\end{theorem}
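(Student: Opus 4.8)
The plan is to mimic the proof of Theorem~\ref{thm:lower}, replacing each place where a special property of Rademacher sums was used with the corresponding consequence of the regularity assumption \eqref{eq:ass}. The trivial bound \eqref{eq:lower1} on the ``row and column'' terms is just Cauchy--Schwarz and carries over verbatim, so the whole issue is the third (``$\log(k+1)$'') term. Fix $1\le k\le n$ and set $\gamma:=\min_{I\subset[n],|I|\le k}\sup_{\|s\|_2,\|t\|_2\le1}\|\sum_{i,j\notin I}X_{ij}s_it_j\|_{\log(k+1)}$; the goal is $\gamma\lesssim_\alpha \Ex\|(X_{ij})\|$.

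First I would reduce to ``$k$ large''. In the Rademacher proof this was done through the Khintchine inequality, which bounded the $L_p$-norm of any chaos-type sum by $\sqrt p$ times its $L_2$-norm. Under \eqref{eq:ass} the analogous statement is that $\|X_{ij}\|_p\le \alpha^{\log_2 p}\|X_{ij}\|_2$, i.e. $\|X_{ij}\|_p\lesssim_\alpha p^{\log_2\alpha}\|X_{ij}\|_2$ for $p\ge 2$, and combined with the triangle inequality $\|\sum_{i,j}X_{ij}s_it_j\|_p\le\sum_{i,j}|s_i||t_j|\,\|X_{ij}\|_p$ this gives a polynomial-in-$k$ upper bound on $\gamma$ of the form $\gamma\lesssim_\alpha (\log(k+1))^{\log_2\alpha}\max_{ij}\|X_{ij}\|_2\cdot\text{(something)}$; since $\max_{ij}\|X_{ij}\|_2\le\Ex\|(X_{ij})\|$ (take $s=e_i$, $t=e_j$ and use Jensen), for small $k$ the bound is immediate, so we may assume $k$ is as large as needed, in particular that the analogue of $2\log(k+1)\le\sqrt k$ holds.

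Next, the dichotomy argument. If $\gamma$ is dominated by the row/column terms we are done by \eqref{eq:lower1}. Otherwise I need the analogue of Lemma~\ref{lem:smallsupportrad}: that the supremum defining $\gamma$ is (up to the row/column terms and a constant) attained on vectors $s,t$ with $|\supp(s)|,|\supp(t)|\le 2\log(k+1)$. The proof of that lemma again used only the Khintchine inequality to kill the ``off-support'' part — here I would instead use the regularity-based bound above, truncating at the threshold $|z_i|^2\ge p^{-\log_2\alpha}$ or similar so that the number of large coordinates is still polynomial in $p$ while the discarded mass is controlled; this enlarges the support bound from $p$ to roughly $p^{\log_2\alpha}$, still $\le\sqrt k$ for $k$ large, which is all that is needed. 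With that in hand I inductively build disjoint sets $I_1,\dots,I_{\sqrt k}$ and unit vectors $s^{(l)},t^{(l)}$ supported on them with $\|S_l\|_{\log(k+1)}\ge\frac12\gamma$, exactly as before.

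The final step — the union bound over the independent variables $S_l$ — requires a Paley--Zygmund-type lower tail bound for $S_l$. In the Rademacher case this was \eqref{eq:LowtailRad}, which needed only $\|S\|_{2p}\le\sqrt e\,\|S\|_p$ for $p=\frac12\log(k+1)$. Here $S_l=\sum_{i,j\in I_l}s_i^{(l)}t_j^{(l)}X_{ij}$ is a sum of independent mean-zero terms each satisfying \eqref{eq:ass}, and one checks that such a sum again satisfies a two-sided regularity inequality $\|S_l\|_{2p}\le C_\alpha\|S_l\|_p$ for $p\ge 2$ — this is the step I expect to be the main obstacle, since it is not literally given in the excerpt and must be justified (it follows from the fact that \eqref{eq:ass} is stable under sums of independent symmetric variables, e.g. via the Hitczenko-type or moment-comparison arguments, or simply because assumption \eqref{eq:ass} is exactly the regularity condition under which such chaoses behave like Rademacher chaoses). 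Granting it, Paley--Zygmund with $Z=|S_l|^{2p}$ and $p\sim\log(k+1)$ gives $\Prob(|S_l|\ge c_\alpha\gamma)\ge c_\alpha k^{-1/2}$, and then the identical computation $\Ex\|(X_{ij})\|\ge c_\alpha\gamma\big(1-(1-c_\alpha k^{-1/2})^{\sqrt k}\big)\ge c_\alpha'\gamma$ using Chebyshev, independence, and $1-y\le e^{-y}$ finishes the proof. All constants depend only on $\alpha$, as claimed.
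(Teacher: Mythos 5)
Your high-level plan matches the paper's proof almost exactly: rerun the proof of Theorem~\ref{thm:lower} with $\alpha$-dependent constants, replacing Khintchine's inequality throughout by the generalized moment-comparison inequality for linear forms in variables satisfying \eqref{eq:ass}, replacing Lemma~\ref{lem:smallsupportrad} by its appropriate generalization, and using Paley--Zygmund together with the same moment comparison for the lower tail of each $S_l$. The paper does precisely this and cites the key inequality---that $\|\sum_{ij}u_{ij}X_{ij}\|_p\le C(\alpha)(p/q)^\beta\|\sum_{ij}u_{ij}X_{ij}\|_q$ for $p\ge q\ge 2$, with $\beta=\max\{1/2,\log_2\alpha\}$---as Lemma~4.1 of \cite{LaSt}; you correctly identify this as the crucial non-trivial input, though you gesture at a justification where a citation would serve.

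Two fixable slips. First, your ``reduce to $k$ large'' step runs the triangle inequality $\|\sum_{ij}X_{ij}s_it_j\|_p\le\sum_{ij}|s_it_j|\,\|X_{ij}\|_p$, whose right-hand side involves $\|s\|_1\|t\|_1$ and is therefore not controlled by $\Ex\|(X_{ij})\|$ up to a power of $\log(k+1)$ for arbitrary unit $\ell_2$ vectors. The correct move is to apply the moment comparison to the bilinear form itself, yielding
$\gamma\le C(\alpha)(\log(k+1))^\beta\sup_{s,t}\|\sum_{ij}X_{ij}s_it_j\|_2=C(\alpha)(\log(k+1))^\beta\max_{ij}(\Ex X_{ij}^2)^{1/2}\lesssim_\alpha(\log(k+1))^\beta\Ex\|(X_{ij})\|$.
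Second, the truncation threshold in the small-support lemma should be $|z_i|^2\ge p^{-2\beta}$, not $p^{-\log_2\alpha}$: the moment comparison introduces a factor of order $p^{\beta}$ when passing from $L_2$ to $L_p$ on the tail sum, and only the bound $\sup_{i\notin I_p(z)}|z_i|\le p^{-\beta}$ can absorb it, so the support bound becomes $p^{2\beta}$ (still $\le\sqrt k$ once $k$ is large in terms of $\alpha$, which is all you need). With these two corrections your argument is exactly the one in the paper.
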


In order to show this result we need the following generalization of Lemma \ref{lem:smallsupportrad}.

\begin{lemma}
\label{lem:smallsupport}
Let $\beta:=\max\{1/2, \log_2\alpha\}$, then for any $p\geq 2$,
\begin{align*}
&\sup_{\|s\|_2,\|t\|_2\leq 1}\left\|\sum_{ij}X_{ij}s_it_j\right\|_p
\\
&\leq \sup_{\substack{\|s\|_2\leq 1,|\supp(s)|\leq p^{2\beta},\\ \|t\|_2\leq 1, |\supp(t)|\leq p^{2\beta}}}
\left\|\sum_{ij}X_{ij}s_it_j\right\|_p
+C(\alpha)\Bigg(\max_{i}\left(\sum_{j}\Ex X_{ij}^2\right)^{1/2}+\max_{j}\left(\sum_{j}\Ex X_{ij}^2\right)^{1/2}\Bigg).
\end{align*}
\end{lemma}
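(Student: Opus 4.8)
The plan is to mimic the proof of Lemma \ref{lem:smallsupportrad}, replacing the Khintchine inequality (which controlled the "small coefficient" parts of the sum via $\|S\|_p\leq \sqrt p\,(\sum \mathbb{E}X_i^2)^{1/2}$) by a suitable substitute valid under the regularity assumption \eqref{eq:ass}. The key point is that condition \eqref{eq:ass} forces a Gaussian-type growth of moments: iterating $\|X_{ij}\|_{2p}\leq\alpha\|X_{ij}\|_p$ gives $\|X_{ij}\|_{p}\lesssim_\alpha p^{\beta}\|X_{ij}\|_2$ for $p\geq 2$, where $\beta=\max\{1/2,\log_2\alpha\}$. So the natural threshold for "which coordinates of $s,t$ are large" changes from $|z_i|^2\geq p^{-1}$ to $|z_i|^2\geq p^{-2\beta}$, which is exactly why the support bound in the statement is $p^{2\beta}$ rather than $p$.

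First I would record the moment-growth consequence of \eqref{eq:ass}: for $p\geq 2$, $\|X_{ij}\|_p\leq C(\alpha)\,p^{\beta}\|X_{ij}\|_2$ (take $p\in[2^{m},2^{m+1})$ and iterate). Next, for a vector $z$ with $\|z\|_2\leq 1$ define $I_p(z):=\{i\le n:\ |z_i|^2\geq p^{-2\beta}\}$, so $|I_p(z)|\leq p^{2\beta}$. As in Lemma \ref{lem:smallsupportrad}, restricting both indices to $I_p(s)\times I_p(t)$ is bounded by the supremum over vectors of support $\leq p^{2\beta}$, which is the first term on the right. It remains to bound the three "leftover" pieces where at least one index lies outside $I_p(\cdot)$.

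For the genuinely new estimate I would bound, say, the part $\sum_{i\le n,\, j\notin I_p(t)}X_{ij}s_it_j$. Fix $s,t$ unit vectors. I would apply a moment bound for sums of independent mean-zero random variables $\sum_{ij}c_{ij}X_{ij}$ (with $c_{ij}=s_it_j$): for instance Rosenthal's inequality or the simpler consequence that $\|\sum Y_k\|_p\leq C(\sqrt p\,(\sum\|Y_k\|_2^2)^{1/2}+p^{\beta}\max_k\|X_k\|_2\|c\|_\infty\cdot(\text{something}))$; more cleanly, I would use that for independent mean-zero $Y_k$, $\|\sum Y_k\|_p\lesssim (\sum\|Y_k\|_2^2)^{1/2}\cdot\text{(const)}+ (\sum_k \|Y_k\|_p^p)^{1/p}$ combined with $\|Y_k\|_p=|c_{ij}|\|X_{ij}\|_p\leq C(\alpha)p^\beta|c_{ij}|\|X_{ij}\|_2$. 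The first term gives $(\sum_{i,j\notin I_p(t)} s_i^2t_j^2\mathbb{E}X_{ij}^2)^{1/2}\leq \max_i(\sum_j\mathbb{E}X_{ij}^2)^{1/2}$, exactly as before. For the second term one uses $j\notin I_p(t)$, i.e. $t_j^2<p^{-2\beta}$, so $(\sum_{ij}|s_it_j|^p\|X_{ij}\|_p^p)^{1/p}\leq C(\alpha)p^\beta\max_{ij}\|X_{ij}\|_2\cdot (\sum_i|s_i|^p)^{1/p}(\sum_{j\notin I_p(t)}|t_j|^p)^{1/p}\leq C(\alpha)p^\beta\max_{ij}\|X_{ij}\|_2\cdot \|t\|_\infty^{(p-2)/p}\cdot p^{-\beta(p-2)/p}\cdots$, which, after bounding $\max_{ij}\|X_{ij}\|_2\leq\max_i(\sum_j\mathbb{E}X_{ij}^2)^{1/2}$ and using $\|s\|_p\leq\|s\|_2\leq1$, collapses to $C(\alpha)$ times $\max_i(\sum_j\mathbb{E}X_{ij}^2)^{1/2}+\max_j(\sum_i\mathbb{E}X_{ij}^2)^{1/2}$. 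The symmetric estimates for $\sum_{i\notin I_p(s),j\in I_p(t)}$ and $\sum_{i\notin I_p(s), j\notin I_p(t)}$ follow identically.

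The main obstacle I anticipate is getting the bookkeeping in the "leftover" estimate to close with the threshold $p^{-2\beta}$: one must verify that the extra factor $p^\beta$ coming from the moment growth of $X_{ij}$ is exactly cancelled by the smallness $t_j^2<p^{-2\beta}$ summed appropriately (and similarly for $s_i$), so that no residual power of $p$ remains. This is precisely why the support bound is $p^{2\beta}$ rather than $p$, and why we need the full strength of \eqref{eq:ass} rather than just a second-moment control. Once this single computation is arranged, Theorem \ref{thm:lowergen} follows from Lemma \ref{lem:smallsupport} by the verbatim argument used to deduce Theorem \ref{thm:lower}: a greedy/iterative construction of $\gtrsim p^{\beta}k$... (more precisely $\sqrt{k}$-many, with the exponent adjusted so that the number of disjoint blocks times the lower-tail probability $e^{-p}$ still tends to a constant) disjoint blocks on which the restricted Rademacher-type sum has large $L_{\log(k+1)}$ norm, followed by the Paley–Zygmund lower-tail bound \eqref{eq:LowtailRad} (which also holds, up to constants depending on $\alpha$, under \eqref{eq:ass}) and independence.
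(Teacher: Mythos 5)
Your decomposition of the sum (threshold $|z_i|^2\geq p^{-2\beta}$ so $|I_p(z)|\leq p^{2\beta}$, then splitting off the three pieces with at least one index outside $I_p(\cdot)$) is exactly the paper's, and the per-variable growth estimate $\|X_{ij}\|_p\leq C(\alpha)p^\beta\|X_{ij}\|_2$ is correct. The problem is the tool you propose for bounding the leftover pieces. The paper does not replace Khintchine by Rosenthal for individual terms; it replaces Khintchine by the generalized Khintchine inequality \eqref{eq:Khingen} from \cite[Lemma 4.1]{LaSt}, which asserts that the \emph{whole linear combination} $\sum u_{ij}X_{ij}$ satisfies the same $p^\beta$-type moment growth: $\|\sum u_{ij}X_{ij}\|_p\leq C(\alpha)(p/q)^\beta\|\sum u_{ij}X_{ij}\|_q$ for $p\geq q\geq 2$. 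With $q=2$ this gives $\|\sum_{i,j\notin I_p(t)}X_{ij}s_it_j\|_p\leq C(\alpha)p^\beta(\sum s_i^2t_j^2\Ex X_{ij}^2)^{1/2}\leq C(\alpha)p^\beta\cdot p^{-\beta}\max_i(\sum_j\Ex X_{ij}^2)^{1/2}$, and the $p^\beta$ exactly cancels the $p^{-\beta}$ from the threshold. That inheritance of the moment-comparison property \eqref{eq:ass} by linear combinations is the nontrivial input you are missing.

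Your proposed substitute does not close. The Rosenthal-type inequality you write, ``$\|\sum Y_k\|_p\lesssim(\sum\|Y_k\|_2^2)^{1/2}+(\sum\|Y_k\|_p^p)^{1/p}$'' with $p$-independent constants, is false: already for $Y_k=\ind_{\{U_k\leq 1/n\}}-1/n$, $k\leq n$, the left side behaves like $\|\mathrm{Poi}(1)-1\|_p\sim p/\log p$ while the right side is $O(1)$, so the constant in front of the $(\sum\|Y_k\|_p^p)^{1/p}$ term must grow like $p/\log p$, and the $\ell_2$ term must carry a $\sqrt p$. Redoing your bookkeeping with the sharp constants: $(p/\log p)\cdot(\sum|s_i|^p|t_j|^p\|X_{ij}\|_p^p)^{1/p}\leq C(\alpha)(p/\log p)\,p^\beta\,\|s\|_p\,\|t\ind_{I_p(t)^c}\|_p\max_{ij}\|X_{ij}\|_2$, and since $\|s\|_p\leq 1$ and $\|t\ind_{I_p(t)^c}\|_p\leq p^{-\beta(p-2)/p}$, the powers of $p$ combine to $p^\beta\cdot p^{-\beta(p-2)/p}=p^{2\beta/p}=O(1)$ but the Rosenthal factor $p/\log p$ survives. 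So this route yields an extra unbounded factor, not the clean constant $C(\alpha)$ claimed by the lemma. (You could perhaps still obtain a cruder version of the lemma with support bound $p^{C(\beta)}$, which would still suffice for Theorem \ref{thm:lowergen}, but you would have to rework the estimates and it is not what you claim; and as written your Rosenthal inequality is simply incorrect.)
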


\begin{proof}
We proceed in the similar way as in the proof of Lemma \ref{lem:smallsupportrad}. We change $p^{-1}$ to $p^{-2\beta}$ in the definition of sets $I_k(z)$ and use
the following extension of the Khintchine inequality (cf. Lemma 4.1 in \cite{LaSt})
\begin{equation}
\label{eq:Khingen}
\left\|\sum_{i,j}u_{ij}X_{ij}\right\|_p\leq C(\alpha)\Big(\frac{p}{q}\Big)^{\beta}\left\|\sum_{i,j}u_{ij}X_{ij}\right\|_q\quad p\geq q\geq 2.
\end{equation}
\end{proof}

\begin{proof}[Proof of Theorem \ref{thm:lowergen}]
The proof of Theorem \ref{thm:lower} works here with the following modifications:
\begin{itemize}
\item The constants depend on $\alpha$;
\item We use the generalized Khintchine-Kahane inequality: $\Ex\|(X_{ij})\|\sim_\alpha (\Ex\|(X_{ij})\|^2)^{1/2}$ (cf. \cite{LaSt});
\item Instead of the Khintchine inequality we apply \eqref{eq:Khingen};
\item To get $\Prob(S_l\geq c'(\alpha)\|S_l\|_{\log(k+1)})\geq \frac{c'(\alpha)}{\sqrt{k}}$ we use the Paley-Zygmund inequality and 
\eqref{eq:Khingen} as in the proof of (4.6) in \cite{LaSt}.
\end{itemize}
\end{proof}

Theorem \ref{thm:lowergen} motivates the following generalization of Conjecture \ref{conj:rademacher}.

\begin{conjecture}
\label{conj:main}
Let $(X_{ij})_{i,j\leq n}$ be independent, mean zero r.v's satisfying condition \eqref{eq:ass}. Then
\begin{align*}
\Ex\|(X_{ij})\|
\sim_{\alpha}
&\max_{i}\left(\sum_{j} \Ex X_{ij}^2\right)^{1/2}+\max_{j}\left(\sum_{i} \Ex X_{ij}^2\right)^{1/2}
\\
&+\max_{1\leq k\leq n}\min_{I\subset [n],|I|\leq k}
\sup_{\|s\|_2,\|t\|_2\leq 1}\left\|\sum_{i,j\notin I}X_{ij}s_it_j\right\|_{\log (k+1)}.
\end{align*}
\end{conjecture}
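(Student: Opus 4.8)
Since the lower bound ``$\gtrsim_\alpha$'' in Conjecture \ref{conj:main} is exactly Theorem \ref{thm:lowergen}, the only issue is the reverse inequality, and this is open already for Rademacher matrices (Conjecture \ref{conj:rademacher}), where Seginer's bound \eqref{eq:Seg} loses a factor $\log^{1/4}(n+1)$. So I would not aim for an unconditional proof; the realistic targets are: (a) the case when the $X_{ij}$ are mixtures of centred Gaussians, deduced from \cite{HLY}; and (b) a reduction, showing it suffices to prove a formally weaker bound. Throughout, write $T(X)$ for the right-hand side in Conjecture \ref{conj:main}; since $T(X)\gtrsim\max_i(\sum_j\Ex X_{ij}^2)^{1/2}+\max_j(\sum_i\Ex X_{ij}^2)^{1/2}$, the work is always in dominating $\Ex\|(X_{ij})\|$ by the third, ``small-support'' term, and Lemma \ref{lem:smallsupport} is the tool that localizes that term to vectors supported on at most $p^{2\beta}$ coordinates.

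\textbf{The Gaussian-mixture case.} Suppose $X_{ij}=R_{ij}g_{ij}$ with $g_{ij}$ i.i.d.\ $\mathcal{N}(0,1)$ and $R_{ij}\ge0$ independent, independent of the $g$'s, chosen so that \eqref{eq:ass} holds. Conditioning on $(R_{ij})$ and applying the main theorem of \cite{HLY}, rewritten via the Remark after Theorem \ref{thm:lower} (which turns the ``$\sqrt{\log(k+1)}\,|\sigma_{ij}|$'' term into the Gaussian chaos $L_{\log(k+1)}$-norm), one gets
\[
\Ex\!\left[\|(X_{ij})\|\,\big|\,(R_{ij})\right]\sim \max_i\sqrt{\textstyle\sum_j R_{ij}^2}+\max_j\sqrt{\textstyle\sum_i R_{ij}^2}+\max_{1\le k\le n}\min_{|I|\le k}\sup_{\|s\|_2,\|t\|_2\le1}\Big\|\textstyle\sum_{i,j\notin I}R_{ij}g_{ij}s_it_j\Big\|_{\log(k+1)}.
\]
Now take expectation in $R$. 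Since $\Ex X_{ij}^2=\Ex R_{ij}^2$, the first two terms are bounded by $\max_i(\sum_j\Ex X_{ij}^2)^{1/2}+\max_j(\sum_i\Ex X_{ij}^2)^{1/2}$ up to a dimensional loss which one removes by truncating $R_{ij}$ at a suitable level and absorbing the tail with \eqref{eq:ass} (the relevant moments are of order $\log n$, and only scales with $\log(k+1)\gtrsim\log n$ contribute to the worst $k$, which keeps the moment-regularity loss bounded by a function of $\alpha$). For the third term one must pull $\Ex_R$ inside the nested $\max_k\min_I\sup$: evaluating $\min_I$ at a deterministic near-optimal $I$ and using Fubini shows that for fixed $s,t$ the full $X$-norm $\|\sum_{i,j\notin I}X_{ij}s_it_j\|_{\log(k+1)}$ dominates, up to constants and \eqref{eq:Khingen}, the corresponding conditional Gaussian quantity, because the $X$-randomness already contains the $g$-randomness. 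This yields $\Ex\|(X_{ij})\|\lesssim_\alpha T(X)$ in the mixture case.

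\textbf{The reduction and the obstacle.} I would then show that the full conjecture follows from a single ``weak'' inequality in which the third term is allowed an extra factor depending only on the relevant scale: split $(X_{ij})$ according to dyadic ranges of the deterministic scale $\|X_{ij}\|_2$, as in the proof of Theorem \ref{thm:twosidedbound}; apply the weak bound on each block after support-reduction via Lemma \ref{lem:smallsupport}; recombine using a block-diagonal norm estimate of the type of Lemma \ref{lem:blocknorm}; and sum the resulting geometric series over scales, the number of which is finite once the small entries are truncated and controlled through \eqref{eq:gaussian}. The main obstacle is, of course, the upper bound itself, which is unconditionally equivalent to the open Conjecture \ref{conj:rademacher}; the real substance lies in (a) commuting the expectation over the mixing variables past the $\max_k\min_I\sup$ in the mixture case, where the moment-regularity \eqref{eq:ass}/\eqref{eq:Khingen} is used in an essential way, and in (b) verifying that the geometric summation in the reduction does not reintroduce a scale-counting factor (which outside the circulant setting is exactly what produces the $\log\log$ loss in Theorem \ref{thm:twosidedbound}).
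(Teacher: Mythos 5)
The paper states this as a conjecture and, in Section~\ref{sec:ext}, proves only three things about it: the lower bound (Theorem~\ref{thm:lowergen}), that it holds for Gaussian mixtures (via Proposition~\ref{prop:uppermaxrows}), and that the upper bound reduces to the formally weaker estimate~\eqref{eq:reduction}. You correctly identify this triptych, but your arguments for the second and third items diverge from the paper's and both have real gaps.

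For Gaussian mixtures $X_{ij}=R_{ij}g_{ij}$, you condition on $R$, apply \cite{HLY} to the conditional Gaussian matrix, and then ``take expectation in $R$.'' This does not go through. First, $\Ex_R\max_i\sqrt{\sum_j R_{ij}^2}$ is genuinely not controlled by $\max_i(\sum_j\Ex R_{ij}^2)^{1/2}$, even under~\eqref{eq:ass}; the gap between these two quantities is precisely the content of the third term in the conjecture, and no truncation removes it. Second, for the third term you need to push $\Ex_R$ past $\max_k\min_I\sup_{s,t}$; you can upper-bound $\min_I$ by a deterministic $I^*$ and use Jensen on the $L_p(g)$-norm, but $\Ex_R$ and $\max_k$ go the wrong way, and $\Ex_R$ and $\sup_{s,t}$ also go the wrong way, so the argument as stated does not close. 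The paper avoids all of this: for Gaussian mixtures, \cite{HLY} gives $\Ex\|X\|\sim\Ex\max_i(\sum_j X_{ij}^2)^{1/2}+\Ex\max_j(\sum_i X_{ij}^2)^{1/2}$ directly (no conditioning), and Proposition~\ref{prop:uppermaxrows} bounds each of these max-row quantities by the right-hand side of the conjecture, with the key tool being the weak/strong moment comparison of \cite{LaSt}.

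Your reduction sketch is also not the paper's, and would not work as described. Splitting $(X_{ij})$ into dyadic ranges of $\|X_{ij}\|_2$ does not produce a block-diagonal structure for a general matrix, so Lemma~\ref{lem:blocknorm} is not applicable, and summing over scales would reintroduce exactly the logarithmic loss you are trying to avoid; that dyadic splitting is the circulant-specific mechanism behind the $\log\log$ in Theorem~\ref{thm:twosidedbound}. The paper's reduction instead runs the decomposition/permutation trick of \cite{HLY}: after symmetrizing, one builds nested index sets $I_k$ of doubly-exponential size $N_k=2^{2^k}$ so that on each complement the $\log(N_k+1)$-norm is at most $\gamma$, augmented so that off-diagonal $L_2$-norms are at most $a\,2^{-2^{k-2}}$; the matrix is then written as $U+V+W$ with $U,V$ block-diagonal (controlled by the hypothesis~\eqref{eq:reduction} applied to each block together with \cite[Theorem~1.1]{LaSt}) and $W$ consisting of super-exponentially small entries (controlled by~\eqref{eq:th44}). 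The doubly-exponential gauge is what prevents a scale count from appearing.

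In short: the lower-bound identification is right, but both the Gaussian-mixture argument and the reduction argument take a different, and in parts incorrect, route from the paper; the correct ingredients are Proposition~\ref{prop:uppermaxrows} (not conditioning) and the HLY permutation construction with $N_k=2^{2^k}$ (not dyadic entry-size buckets).
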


The operator norm is trivially bigger than maximum length of columns/rows.
In \cite{HLY} it was shown that in the case when $X_{ij}$ are mixtures of Gaussian r'v's then this bound may be reversed in expectation:
\[
\Ex\|(X_{ij})\|\sim \Ex\max_{i}\sqrt{\sum_j X_{ij}^2}+\Ex\max_{j}\sqrt{\sum_i X_{ij}^2}.
\]
The proposition below implies in particular that Conjecture \ref{conj:main} holds for mixtures of Gaussian variables.

\begin{proposition}
\label{prop:uppermaxrows}
Let $(X_{ij})_{i,j\leq n}$ be independent, mean zero r.v's satisfying condition \eqref{eq:ass}. Then
\begin{align*}
\Ex\max_{i}\sqrt{\sum_j X_{ij}^2}
&\lesssim_{\alpha}
\max_{i}\left(\sum_{j} \Ex X_{ij}^2\right)^{1/2}
+\max_{1\leq k\leq n}\min_{I\subset [n],|I|\leq k}
\sup_{\|t\|_2\leq 1}\max_{i\notin I}\left\|\sum_{j}X_{ij}t_j\right\|_{\log (k+1)}
\\
&
\lesssim_{\alpha}\max_{i}\left(\sum_{j} \Ex X_{ij}^2\right)^{1/2}+\max_{j}\left(\sum_{i} \Ex X_{ij}^2\right)^{1/2}
\\
&
\phantom{aa}
+\max_{1\leq k\leq n}\min_{I\subset [n],|I|\leq k}\sup_{\|t\|_2\leq 1}\max_{i\notin I}
\left\|\sum_{j\notin I}X_{ij}t_j\right\|_{\log (k+1)}.
\end{align*}
\end{proposition}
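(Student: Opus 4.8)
The plan is to control $\Ex\max_i\sqrt{\sum_j X_{ij}^2}$ by viewing the row-sums as operator norms of simple auxiliary matrices and then reusing the ``peeling large entries'' idea that appears in Lemma \ref{lem:smallsupportrad} and Lemma \ref{lem:smallsupport}. For the \emph{first} inequality, I would fix $1\le k\le n$ and, for a set $I$ of size at most $k$ realizing the minimum on the right-hand side, split each row $i$ into its part indexed by $J_i:=\{j: |X_{ij}| \text{ is among the } k \text{ largest in row } i\}$ and the complementary part. On the complementary part, Chebyshev/Khintchine (via \eqref{eq:Khingen}, or just the $L_2$ bound together with condition \eqref{eq:ass}) together with the union bound over $n$ rows shows that the contribution of the small entries is $\lesssim_\alpha \max_i(\sum_j \Ex X_{ij}^2)^{1/2}$; this is exactly the mechanism used in Lemma \ref{lem:smallsupportrad}. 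For the part supported on the (small) sets $J_i$, the key point is that $\sqrt{\sum_{j\in J_i}X_{ij}^2} = \sup_{\|t\|_2\le 1}|\sum_{j\in J_i}X_{ij}t_j|$ pointwise, and taking the maximum over $i\notin I$ and then applying Markov's inequality at exponent $p=\log(k+1)$ with a union bound over the at most $n$ rows (and a dyadic sum over $k$) produces the term $\max_k\min_{|I|\le k}\sup_{\|t\|_2\le 1}\max_{i\notin I}\|\sum_j X_{ij}t_j\|_{\log(k+1)}$. The rows in $I$ (at most $k$ of them) are absorbed into the next level of the dyadic peeling, exactly as in the inductive construction in the proof of Theorem \ref{thm:lower}.

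For the \emph{second} inequality I would show that restricting the inner sum from $\sum_j$ to $\sum_{j\notin I}$ costs only the extra column-length term $\max_j(\sum_i \Ex X_{ij}^2)^{1/2}$. Fix $I$ with $|I|\le k$; then
\[
\sup_{\|t\|_2\le 1}\max_{i\notin I}\Big\|\sum_{j}X_{ij}t_j\Big\|_{\log(k+1)}
\le \sup_{\|t\|_2\le 1}\max_{i\notin I}\Big\|\sum_{j\notin I}X_{ij}t_j\Big\|_{\log(k+1)}
+\sup_{\|t\|_2\le 1}\max_{i\notin I}\Big\|\sum_{j\in I}X_{ij}t_j\Big\|_{\log(k+1)} .
\]
For the second summand, since $|I|\le k$ we may use \eqref{eq:Khingen} to pass to the $L_2$-norm at the price of a factor $(\log(k+1))^\beta$, but then bound $\max_{i\notin I}(\sum_{j\in I}\Ex X_{ij}^2 t_j^2)^{1/2}$ crudely by $\max_{i\notin I}\sqrt{\sum_{j\in I}\Ex X_{ij}^2}$, which in turn is controlled — after another round of peeling the at most $k$ largest squared expectations and invoking that $|I|\le k$ — by $\max_j(\sum_i \Ex X_{ij}^2)^{1/2}$ plus a lower-order contribution that folds back into the same max-min expression. (Alternatively, and more cleanly, one can observe directly that for the index set $I$ that is optimal at level $k$, the ``missing'' entries $X_{ij}$ with $j\in I$ form a sub-matrix with at most $k$ columns, so its rows have total squared $L_2$-mass at most $k\cdot\max_j\sum_i\Ex X_{ij}^2 / \dots$ — one should phrase this via the column-length term as stated.)

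The main obstacle will be handling the interaction between the two index sets that naturally appear: the set $I$ optimizing the ``missing rows'' (those $i\in I$ dropped from the maximum $\max_{i\notin I}$) and the set $J$ optimizing the ``missing columns'' (those $j\in I$ dropped from the inner sum). In the dyadic peeling one has to be careful that removing the $k$ heaviest rows \emph{and} the $k$ heaviest columns at level $k$ can be done with a single set of size comparable to $k$, so that the resulting estimate is still of the form $\max_k\min_{|I|\le k}(\cdots)$ and not a sum of two separately-indexed suprema; this is the same subtlety that forces the minimum-over-$I$ formulation in \eqref{eq:lower} and is resolved by the standard trick of, at each dyadic scale, discarding $O(k)$ indices and charging the discarded mass to the next scale — the geometric decay of the scales $k=2^r$ keeps the total bounded. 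Once this bookkeeping is set up, the remaining steps (Markov at exponent $\log(k+1)$, union bound over $n\le e^{\log(k+1)}\cdot(\text{const})$ rows for the relevant ranges of $k$, and the generalized Khintchine--Kahane comparison $\Ex\|\cdot\|\sim_\alpha(\Ex\|\cdot\|^2)^{1/2}$) are routine and parallel the proof of Theorem \ref{thm:lower}.
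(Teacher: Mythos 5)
Your first inequality hinges on a step that does not go through as stated, and it is the heart of the matter. You want to control $\max_{i\notin I}\sqrt{\sum_{j\in J_i}X_{ij}^2}$ by ``Markov at exponent $p=\log(k+1)$ plus union bound,'' appealing to the pointwise identity $\sqrt{\sum_{j\in J_i}X_{ij}^2}=\sup_{\|t\|_2\le 1}|\sum_{j\in J_i}X_{ij}t_j|$. But what that identity shows is that $\sqrt{\sum_jX_{ij}^2}$ is a \emph{strong} moment, i.e.\ the quantity you need to control in $L_p$ is $\bigl\|\sup_t|\sum_j X_{ij}t_j|\bigr\|_p$, whereas the right-hand side of the proposition contains only the \emph{weak} moment $\sup_t\|\sum_j X_{ij}t_j\|_p$. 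Markov and a union bound over rows do not bridge this gap: you end up needing precisely the comparison of weak and strong moments for random vectors with independent coordinates satisfying \eqref{eq:ass}. That is the non-trivial input here. The paper imports this as a black box from \cite{LaSt}: for each fixed $i$, the weak--strong concentration theorem gives
\[
\Bigl\|\sqrt{\textstyle\sum_j X_{ij}^2}\Bigr\|_{l}\lesssim_\alpha \Bigl(\sum_j \Ex X_{ij}^2\Bigr)^{1/2}+\sup_{\|t\|_2\le 1}\Bigl\|\sum_j X_{ij}t_j\Bigr\|_{l},
\]
after which one arranges rows so that the $2^l$ rows with smallest $\gamma$-value are handled at scale $l$, and uses that $\Ex\max_{i\ge 4}Y_i\lesssim\max_{l\ge 1}\max_{2^{l+1}\le i\le 2^{l+2}}\|Y_i\|_l$ for nonnegative $Y_i$. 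Your ``peel the $k$ largest entries in each row into a random set $J_i$'' scheme also introduces the complication that $J_i$ is random, so the supremum over $t$ with support in $J_i$ is itself a random restriction; the paper's route avoids this entirely by applying the weak--strong comparison to the full row vector $(X_{ij})_j$.

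Your second inequality is closer to the paper's argument but is vague exactly where the paper's idea lives. After splitting $\sum_j=\sum_{j\notin I_k}+\sum_{j\in I_k}$ and applying \eqref{eq:Khingen} to the second piece, you pick up a factor $\log^\beta(k+1)$; the point of the paper's construction is to manufacture a compensating $k^{-1/2}$. Concretely, one enlarges $I_k$ to $\tilde I_k=I_k\cup\bigcup_{j\in I_k}I(j)$, where $I(j)$ collects the $k-1$ indices $i$ with largest $\Ex X_{ij}^2$; then $|\tilde I_k|\le k^2$ (so the $\log$ is only squared, harmless after \eqref{eq:Khingen}), and for $i\notin I(j)$ one has $\Ex X_{ij}^2\le\frac1k\sum_i\Ex X_{ij}^2$, whence $\max_{i\notin\tilde I_k}\max_{j\in I_k}(\Ex X_{ij}^2)^{1/2}\le k^{-1/2}\max_j(\sum_i\Ex X_{ij}^2)^{1/2}$. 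Your parenthetical ``alternatively, and more cleanly \dots'' trails off without providing this cancellation, and the earlier sentence about ``another round of peeling'' gestures at, but does not carry out, the $\tilde I_k$ bookkeeping. Once you insert the \cite{LaSt} weak--strong comparison as the engine of the first inequality and make the $\tilde I_k$ enlargement explicit in the second, you recover the paper's argument; as written, both steps have real gaps.
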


\begin{proof}
Let 
\[
\gamma:=\max_{1\leq k\leq n}\min_{I\subset [n],|I|\leq k}
\sup_{\|t\|_2\leq 1}\max_{i\notin I}\left\|\sum_{j}X_{ij}t_j\right\|_{\log (k+1)}.
\]
Let us put rows/columns of $X$ in such a way that for $1\leq l\leq \log n$,
\[
\sup_{\|t\|_2\leq 1}\max_{i\geq 2^{l+1}}\left\|\sum_{j}X_{ij}t_j\right\|_{l}\leq \gamma.
\]
The weak-strong concentration result \cite{LaSt} shows that
\[
\max_{2^{l+1}\leq i\leq 2^{l+2}}\left\|\sqrt{\sum_j X_{ij}^2}\right\|_l\leq_{\alpha}
\max_{i}\left(\sum_{j} \Ex X_{ij}^2\right)^{1/2}
+\sup_{\|t\|_2\leq 1}\max_{2^{l+1}\leq i\leq 2^{l+2}}\left\|\sum_{j}X_{ij}t_j\right\|_{l}.
\]
Finally it is not hard to observe that
\[
\Ex\max_{i\geq 4}\sqrt{\sum_j X_{ij}^2}
\lesssim \max_{l\geq 1}\max_{2^{l+1}\leq i\leq 2^{l+2}}\left\|\sqrt{\sum_j X_{ij}^2}\right\|_l.
\]

To see the last estimate in the assertion let us fix $1\leq k\leq n$ and choose $I_k\subset [n]$ such that
$|I_k|\leq k$ and 
\[
\min_{I\subset [n],|I|\leq k}
\sup_{\|t\|_2\leq 1}\max_{i\notin I}\left\|\sum_{j\notin I}X_{ij}t_j\right\|_{\log (k+1)}
=\sup_{\|t\|_2\leq 1}\max_{i\notin I_k}\left\|\sum_{j\notin I_k}X_{ij}t_j\right\|_{\log (k+1)}.
\]
For any $j\in I_k$ let $I(j)$ be the subset of $[n]$ containing indices of $(k-1)$ largest elements of the sequence
$(\Ex X_{ij}^2)_{i\in [n]}$. Put $\tilde{I}_k=I_k\cup \bigcup_{j\in I_k}I(j)$. Then $|\tilde{I}_k|\leq k^2$ and
by \eqref{eq:Khingen}
\[
\gamma\leq \max_{1\leq k\leq n}\sup_{ \|t\|_2\leq 1}\max_{i\notin \tilde{I}_k}\left\|\sum_{j}X_{ij}t_j\right\|_{\log (k^2+1)}
\leq C(\alpha)\max_{1\leq k\leq n}\sup_{ \|t\|_2\leq 1}\max_{i\notin \tilde{I}_k}\left\|\sum_{j}X_{ij}t_j\right\|_{\log (k+1)}.
\]
We have 
\[
\sup_{\|t\|_2\leq 1}\max_{i\notin \tilde{I}_k}\left\|\sum_{j\notin I_k}X_{ij}t_j\right\|_{\log (k+1)}
\leq \sup_{\|t\|_2\leq 1}\max_{i\notin I_k}\left\|\sum_{j\notin I_k}X_{ij}t_j\right\|_{\log (k+1)}
\]
and, applying again \eqref{eq:Khingen}, 
\begin{align*}
\sup_{\|t\|_2\leq 1}\max_{i\notin \tilde{I}_k}\left\|\sum_{j\in I_k}X_{ij}t_j\right\|_{\log (k+1)}
&\leq C(\alpha)\log^\beta(k+1)\sup_{\|t\|_2\leq 1}\max_{i\notin \tilde{I}_k}\left\|\sum_{j\in I_k}X_{ij}t_j\right\|_{2}
\\
&=C(\alpha)\log^\beta(k+1)\max_{i\notin \tilde{I}_k}\max_{j\in I_k}(\Ex X_{ij}^2)^{1/2}
\\
&\leq C(\alpha)\log^\beta(k+1)\max_{j\in I_k}\max_{i\notin I(j)}(\Ex X_{ij}^2)^{1/2}
\\
&\leq C(\alpha)\log^\beta(k+1)\frac{1}{\sqrt{k}}\max_{j\in I_k}\Big(\sum_i\Ex X_{ij}^2\Big)^{1/2}
\\
&\leq C'(\alpha)\max_j\Big(\sum_i\Ex X_{ij}^2\Big)^{1/2}.
\end{align*}

\end{proof}

\begin{rem}
The decomposition/permutation trick of \cite{HLY} shows that in order to show the upper part of Conjecture \ref{conj:main} it is enough to prove that 
\begin{align}
\notag
\Ex\|(X_{ij})_{i,j\leq n}\|
&\lesssim_\alpha \max_{i}\left(\sum_{j} \Ex X_{ij}^2\right)^{1/2}+\max_{j}\left(\sum_{i} \Ex X_{ij}^2\right)^{1/2}
\\
\label{eq:reduction}
&\phantom{aaa}+
\sup_{\|s\|_2,\|t\|_2\leq 1}\left\|\sum_{i,j}X_{ij}s_it_j\right\|_{\log (n+1)}.
\end{align}
\end{rem}

\begin{proof}[Sketch of the proof]
The standard argument (cf. proof of \cite[Corollary 4.1]{HLY}) shows that we may consider symmetric matrices $X=(X_{ij})$, i.e. the case when $X_{ij}=X_{ji}$ and $(X_{ij})_{i\geq j}$ are independent mean zero r.v's satisfying condition \eqref{eq:ass}.
We assume that \eqref{eq:reduction} holds for any square submatrix of $X$ and we will show that
\[
\Ex\|(X_{ij})\|
\lesssim_{\alpha} a+\gamma,
\]
where
\[
a:=\max_{i}\left(\sum_{j} \Ex X_{ij}^2\right)^{1/2},\quad
\gamma:=\max_{k\geq 1}\min_{|I|\leq k}
\sup_{\|s\|_2,\|t\|_2\leq 1}\left\|\sum_{i,j\notin I}X_{ij}s_it_j\right\|_{\log (k+1)}.
\]
Observe that for $p\geq 2$, $\|X_{ij}\|_p\leq \alpha p^{\log_2\alpha}\|X_{ij}\|_2\leq \alpha p^\beta\|X_{ij}\|_2$, where $\beta=\max\{1/2,\log_2\alpha\}$. Thus the proof of the 
upper bound in \cite[Theorem 4.4]{HLY} shows that 
\begin{equation}
\label{eq:th44}
\Ex\|(X_{ij})_{i,j}\|\lesssim_\alpha a+\max_{i}\log^\beta(i)\max_{j}\|X_{ij}\|_2.
\end{equation}
Now we  construct a permutation $(i_1,\ldots,i_n)$ of indices $\{1,\ldots,n\}$ in a similar
way as in the proof of \cite[Theorem 3.9]{HLY}. We set $N_0=1$ and $N_k=2^{2^k}$ for $k\geq 1$.
We choose $I_1=\{i_1,\ldots,i_{N_1}\}$ in such a way that
\[
\sup_{\|s\|_2,\|t\|_2\leq 1}\left\|\sum_{i,j\notin I_1}X_{ij}s_it_j\right\|_{\log (N_1+1)}
\leq \gamma.
\] 
Suppose that we have selected $I_k=\{i_1,\ldots,i_{N_k}\}$. Then first among the remaining indices we choose $N_{k}N_{k-1}$ indices $i$ that contain the $N_{k-1}$ largest 
$L_2$-norms $\|X_{ij}\|_2$ of each column $j\in I_{k}$. Finally among remaining indices we choose
$N_{k+1}-N_k-N_{k}N_{k-1}\geq N_{k}$ indices $i$ in such a way that  
$I_{k+1}=\{i_1,\ldots,i_{N_{k+1}}\}$ satisfies
\[
\sup_{\|s\|_2,\|t\|_2\leq 1}\left\|\sum_{i,j\notin I_{k+1}}X_{ij}s_it_j\right\|_{\log (N_{k}+1)}
\leq \gamma.
\] 
The above construction implies in particular that
\begin{equation}
\label{eq:estoffdiag}
\|X_{ij}\|_2\leq aN_{k-1}^{-1/2}=a2^{-2^{k-2}}\quad \mbox{ when }j\leq N_k \mbox{ and }i\geq M_k:=N_k+N_kN_{k-1}.
\end{equation}
We set
\[
E_1:=[1,M_1]^2\cup\bigcup_{k\geq 1}[N_{2k},M_{2k+1}],\quad
E_2:=\bigcup_{k\geq 1}[N_{2k-1},M_{2k}]\setminus E_1,\quad E_3:=[1,n]^2\setminus(E_1\cup E_2)
\]
and write $X=U+V+W$, where 
\[
U_{ij}:=X_{ij}\ind_{\{(i,j)\in E_1\}},\quad V_{ij}:=X_{ij}\ind_{\{(i,j)\in E_2\}},
\quad W_{ij}:=X_{ij}\ind_{\{(i,j)\in E_3\}}.
\]
Matrix $U$ is block diagonal with the first block $U_1=(X_{ij})_{i,j\in J_1}$ of dimension 
$|J_1|=M_1$ and
blocks $U_k=(X_{ij})_{i,j\in J_k}$ for $k\geq 2$ of dimension $|J_k|=M_{2k-1}-N_{2k-2}+1$. We have
\[
(\Ex\|U_1\|^2)^{1/2}\leq \Big(\sum_{i,j\in [1,M_1]}\Ex X_{ij}^2\Big)^{1/2}\leq M_1^{1/2}a
\]
and for $k=2,3,\ldots$
\begin{align*}
\Big(\Ex\|U_k\|^{2^k}\Big)^{2^{-k}}
&\sim_\alpha \Big(\Ex\|U_k\|+\sup_{\|s\|_2,\|t\|_2\leq 1}\Big\|\sum_{i,j\in J_k}X_{ij}s_it_j\Big\|_{2^k}\Big)
\\
&\lesssim a+\sup_{\|s\|_2,\|t\|_2\leq 1}\Big\|\sum_{i,j\notin I_{2k-3}}X_{ij}s_it_j\Big\|_{2^k}
\lesssim_\alpha a+\gamma,
\end{align*}
where the first equivalence follows by comparison of weak and strong moments \cite[Theorem 1.1]{LaSt} and the last one by the construction of $I_k$.
Therefore
\[
\Ex\|U\|=\Ex\sup_{k\geq 1}\|U_k\|\lesssim 
\sup_{k\geq 1}\Big(\Ex\|U_k\|^{2^k}\Big)^{1/2^k}\lesssim_\alpha a+\gamma.
\] 
In a similar way we show that $\Ex\|V\|\leq a+\gamma$. Finally observe that if $(i,j)\in E_3$
and $M_k\leq i<N_{k+1}$ then either $j\leq N_k$ or $j\geq M_{k+1}$ and if $N_{k+1}\leq i<M_{k+1}$
then $j\leq N_k$ or $j\geq M_{k+2}$. In all this cases $\|X_{ij}\|_2\leq a2^{-2^{k-2}}$ by \eqref{eq:estoffdiag}.
Thus \eqref{eq:th44} yields
\[
\Ex \|W\|\lesssim_\alpha a+\sup_k\log^\beta (M_{k+1})\max_{M_k\leq i<M_{k+1}}\max_j\|X_{ij}\|_2
\lesssim_\alpha a+\sup_k2^{k\beta}a2^{-2^{k-2}}\lesssim_\alpha a.
\]

\end{proof}


\noindent
{\sc Rafa{\l} Lata{\l}a}\\
Institute of Mathematics\\
University of Warsaw\\
Banacha 2\\
02-097 Warszawa, Poland\\
\texttt{rlatala@mimuw.edu.pl}

\medskip

\noindent
{\sc Witold Świątkowski}\\
Institute of Mathematics\\
Polish Academy of Sciences\\
Śniadeckich 8\\
00-656 Warszawa, Poland\\
\texttt{wswiatkowski@impan.pl}

\end{document}